\theoremstyle{plain}
\newtheorem{theorem}{Theorem}[section]
\newtheorem{corollary}[theorem]{Corollary}
\newtheorem{proposition}[theorem]{Proposition}
\newtheorem{lemma}[theorem]{Lemma}
\theoremstyle{definition}
\newtheorem{definition}[theorem]{Definition}
\newtheorem{construction}[theorem]{Construction}
\newtheorem{notation}[theorem]{Notation}
\newtheorem{example}[theorem]{Example}
\newcommand{\ACat}{{\mathbf{C}}}
\def\b#1{\mathbf{#1}}
\newcommand{\B}{{\mathcal{B}}}
\newcommand{\Bilin}{{\mathbf{Bilin}}}
\newcommand{\blank}{\underline{\phantom{m}}}
\newcommand{\br}[1]{\langle#1\rangle}
\newcommand{\C}{{\mathcal{C}}}
\newcommand{\CC}{{\mathbf{C}}}
\newcommand{\Cat}{{\mathbf{Cat}}}
\newcommand{\Catstar}{\Cat_*}
\newcommand{\concat}{\odot}
\def\d{\displaystyle}
\newcommand{\D}{{\mathcal{D}}}
\newcommand{\E}{{\mathcal{E}}}
\newcommand{\e}{\varepsilon}
\newcommand{\Estar}{{E^{*}}}
\newcommand{\F}{{\mathcal{F}}}
\newcommand{\G}{{\mathcal{G}}}
\newcommand{\GC}{\Gstar{\textnormal{-}}\ACat}
\newcommand{\Gstar}{{\mathcal{G}_{*}}}
\newcommand{\GstarCat}{\Gstar{\textnormal{-}}\Cat}
\newcommand{\id}{{\mathop{\textnormal{id}}\nolimits}}
\long\def\ignore#1\endignore{}
\newcommand{\Inj}{{\mathbf{Inj}}}
\newcommand{\Jhat}{\hat{J}}
\newcommand{\klin}{\P_k}
\newcommand{\laxstar}{lax$_*$}
\newcommand{\multprod}{\Gamma}
\newcommand{\Hom}{{\mathbf{Hom}}}
\newcommand{\M}{{\mathbf{M}}}
\newcommand{\Mgrph}{{\mathbf{Mgrph}}}
\newcommand{\MM}{\mathbb{M}}
\newcommand{\Mult}{{\mathbf{Mult}}}
\newcommand{\Multstar}{\Mult_*}
\newcommand{\Nmult}{{\mathbf{Nmult}}}
\newcommand{\Ob}{{\mathop{\textnormal{Ob}}}}
\newcommand{\obj}[1]{{\mathop{\textnormal{Ob}}}\left(#1\right)}
\newcommand{\op}{{\textnormal{op}}}
\def\P{{\mathbf{P}}}
\newcommand{\PP}{{\mathbb{P}}}
\newcommand{\Set}{{\mathbf{Set}}}
\newcommand{\Strict}{{\mathbf{Strict}}}
\newcommand{\subst}[1]{\lceil_{\!#1}}
\let\sma\wedge
\def\u#1{\underline{#1}}
\newcommand{\Z}{{\mathbb{Z}}}
\begin{document}

\title{Permutative categories, multicategories, and algebraic $K$-theory}

\author{A.\ D.\ Elmendorf}

\address{Department of Mathematics\\
Purdue University Calumet\\
Hammond, IN 46323}

\email{aelmendo@calumet.purdue.edu}

\author{M.\ A.\ Mandell}
\thanks{The second author was supported in part by NSF grant
DMS-050469}

\address{Department of Mathematics\\ Indiana University\\
Bloomington, IN 47405}

\email{mmandell@indiana.edu}

\date{September 1, 2009}

\begin{abstract}
We show that the $K$-theory construction of \cite{RMA}, which
preserves multiplicative structure, extends to a symmetric
monoidal closed bicomplete source category, with the
multiplicative structure still preserved.  The source category of
\cite{RMA}, whose objects are permutative categories, maps fully
and faithfully to the new source category, whose objects are
(based) multicategories.
\end{abstract}

\maketitle


\section{Introduction}
\label{Intro}

In \cite{RMA}, we introduced a $K$-theory functor from permutative
categories to symmetric spectra, equivalent to previous
definitions, which also preserves multiplicative structure. The
multiplicative structure on the category of permutative categories
was captured by a \emph{multicategory} structure, which is a
simultaneous generalization of the concepts of operad and
symmetric monoidal category.  Since symmetric spectra support a
symmetric monoidal multiplicative structure, they automatically
form a multicategory, and it is this structure that our $K$-theory
functor preserves.

Part of the added flexibility that multicategories offer is that a
full subcategory of a multicategory inherits a multicategory
structure.  In particular, a full subcategory of a symmetric
monoidal category inherits a multicategory structure, although it
will no longer be monoidal unless it is closed under the monoidal
product.  In addition, the maps of multicategories, or
\emph{multifunctors}, between symmetric monoidal categories are
simply the lax symmetric monoidal functors.  Since the $K$-theory
map produced in \cite{RMA} is a multifunctor from a multicategory
(the permutative categories) to a symmetric monoidal category (the
symmetric spectra), it is a natural question whether the source
multicategory can be embedded as a full subcategory of a symmetric
monoidal category, with the $K$-theory map extending to it as a
lax symmetric monoidal functor. We can also ask whether the new
source category can be made bicomplete, to make it a convenient
place to do homotopy theory. We answer both these questions in the
affirmative: this is the content of Theorems
\ref{emb} and \ref{smcb}, with the objects of the larger category being,
ironically enough, multicategories!  We actually use \emph{based}
multicategories, in the sense that the objects come equipped with
a preferred map from the terminal multicategory, and the embedding takes
a permutative category to its underlying based multicategory.

\begin{theorem}\label{emb}
Let $\P$ be the multicategory of permutative categories, and let
$\Multstar$ be the symmetric monoidal category of based
multicategories.  Then the underlying based multicategory
construction gives a full and faithful multifunctor from $\P$ to
$\Multstar$ of multicategories enriched over $\Cat$.
\end{theorem}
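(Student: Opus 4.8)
The plan is to exploit that $\P$ and $\Multstar$ are both multicategories enriched over $\Cat$, so that \emph{full and faithful} means that the underlying construction $U$ induces an isomorphism of the $\Cat$-valued multimorphism objects
\begin{equation*}
U \colon \P(C_1, \ldots, C_k; D) \longrightarrow \Multstar(UC_1, \ldots, UC_k; UD)
\end{equation*}
for every arity $k$ and every tuple $C_1, \ldots, C_k, D$ of permutative categories; injectivity of $U$ on objects (which follows since $c_1 \otimes \cdots \otimes c_n$ is recovered from $UC$ as the object representing $d \mapsto UC(c_1, \ldots, c_n; d)$) then upgrades this to an embedding. First I would record the definition of $U$ on objects: $UC$ has the objects of $C$, its $n$-ary multimorphism category is $C(c_1 \otimes \cdots \otimes c_n, d)$, composition is induced from $\otimes$ and composition in $C$, and the basepoint is the map out of the terminal multicategory selecting the unit object with its canonical iterated unit multimorphisms. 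Checking that $UC$ is a based multicategory and that $U$ is functorial on $1$-ary maps is routine.

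Next I would identify the target. Since $\Multstar$ is symmetric monoidal, its multicategory structure is the canonical one, so
\begin{equation*}
\Multstar(UC_1, \ldots, UC_k; UD) = \Multstar\bigl(UC_1 \sma \cdots \sma UC_k,\ UD\bigr),
\end{equation*}
the category of based multifunctors out of the $k$-fold smash product, with based multinatural transformations as morphisms. Using the universal property defining the smash product of based multicategories, I would rewrite a based multifunctor on $UC_1 \sma \cdots \sma UC_k$ as a based \emph{$k$-linear} map of multicategories, i.e.\ an assignment on objects and multimorphisms that is a based multifunctor in each variable separately and respects the evident coherences. The substance of the theorem is then the comparison of this notion with the $k$-linear maps of permutative categories that define $\P(C_1, \ldots, C_k; D)$, namely functors $\prod_i C_i \to D$ equipped with coherent distributivity isomorphisms: one must see that the underlying object-and-multimorphism data of such a functor, together with its distributivity isomorphisms, is exactly a based $k$-linear map of the underlying multicategories, naturally in the morphisms of $k$-linear maps on each side.

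The hard part will be precisely this comparison, where I expect to spend the most effort unwinding the explicit (generators-and-relations, equivalently universal-bilinear-map) presentation of the smash product and checking, with full attention to the symmetric-group equivariance, the unit multimorphisms, and the basepoints, that the two presentations carry the same information. Granting it, I would build the inverse functor by hand: from a based multifunctor $F$ on $UC_1 \sma \cdots \sma UC_k$, reconstruct a functor $\prod_i C_i \to D$ by evaluating $F$ on the multimorphisms representing tuples of objects and their identities, read off the distributivity isomorphisms from the multilinear structure of $F$, and verify the coherence conditions. Showing that this assignment and $U$ are mutually inverse, and that both send $2$-cells to $2$-cells strictly, yields the required isomorphism of multimorphism categories; this simultaneously gives fullness and faithfulness and shows the $\Cat$-enrichment is preserved.
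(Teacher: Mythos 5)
Your overall plan coincides with the paper's: define $U$ on objects as the underlying based multicategory, identify the $k$-ary morphism categories on the target side with based $k$-linear maps of multicategories, and compare these with the $k$-linear maps of permutative categories by evaluating on identity multimorphisms to read off the structure maps. (The detour through the smash product is harmless but adds nothing here, since at this stage the multicategory structure on $\Multstar$ is given directly by based $k$-linear maps; the smash product is only constructed afterwards precisely to represent them.) However, the one genuinely non-routine step is exactly the one you set aside with ``granting it.'' A $k$-morphism in $\P$ carries distributivity maps $\delta_i\colon f(c_i)\oplus f(c_i')\to f(c_i\oplus c_i')$ (natural transformations, not isomorphisms as you write) subject to coherence diagrams that only ever involve \emph{two} summands in each variable at a time --- in particular the interchange condition between variables $i$ and $j$ is a single pentagon for a $2\times 2$ array. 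By contrast, a based bilinear map of multicategories must satisfy the interchange diagram for an arbitrary $m$-morphism against an arbitrary $n$-morphism. The bridge between the two is a double induction: first show the $(m,2)$ diagram follows from the $(2,2)$ pentagon by splitting off the last summand and using associativity of $\lambda$, then use symmetry in $m$ and $n$ to get $(2,n)$, then induct on $m$ again for fixed $n$. This is the content of the large diagram the paper devotes Figure 1 to, and without it the comparison of the two notions of $k$-linear map --- hence both fullness and well-definedness of $U$ on $k$-morphisms --- is unproved. Your proposal correctly locates the difficulty but does not supply this argument.

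Two smaller points. First, your claim that injectivity on objects follows because $c_1\oplus\cdots\oplus c_n$ is ``the object representing $d\mapsto U\C(c_1,\ldots,c_n;d)$'' only determines the sum up to isomorphism, and the paper's Example \ref{MacLaneCategory} shows the permutative structure genuinely cannot be recovered from the isomorphism class of $U\C$; the embedding statement requires a set-level identification of $U\C_2(a,b;c)$ inside the morphisms of $\C$, and in any case is not needed for ``full and faithful'' as the paper defines it for multifunctors. Second, for faithfulness the key observation (which you gesture at) is that $\lambda$, equivalently $\delta_i$, is recovered as the image under $Uf$ of the element $\id_{a\oplus b}\in U\C(a,b;a\oplus b)$; making this explicit is what turns your sketch of the inverse construction into a proof.
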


The category $\Multstar$ cures many of the defects of the
multicategory of permutative categories: in addition to being
symmetric monoidal, it is closed, complete, and cocomplete. We
will derive these properties from the same ones for unbased
multicategories.

\begin{theorem}\label{smcb}
The categories $\Mult$ of unbased multicategories and $\Multstar$
of based multicategories are both symmetric monoidal, closed, and
bicomplete.
\end{theorem}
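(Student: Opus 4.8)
The plan is to establish the three properties—symmetric monoidal, closed, and bicomplete—for $\Mult$ first, and then transfer each of them to $\Multstar$ via the observation that based multicategories are objects under a fixed terminal object, so that $\Multstar$ is a coslice (under-category) of $\Mult$. I would begin with bicompleteness, which is the most structural and least delicate of the three. A multicategory consists of a set (or class) of objects together with, for each tuple of inputs and each output, a set of multimorphisms, equipped with units and a composition satisfying associativity, unit, and equivariance axioms. The plan is to exhibit $\Mult$ as the category of algebras for a suitable monad, or more directly as a category of models for an essentially algebraic theory (a category of presheaves satisfying equational conditions), from which bicompleteness follows by general nonsense: limits are computed objectwise on the underlying sets of objects and multimorphisms, while colimits require the adjoint functor theorem or an explicit free/forgetful adjunction. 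I expect limits to be routine and colimits to demand the most care, since composites and the equivariance action must be freely generated and then quotiented.

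Next I would construct the symmetric monoidal structure. The tensor product $\mathcal{M} \otimes \mathcal{N}$ should have object set the product $\obj{\mathcal{M}} \times \obj{\mathcal{N}}$, with multimorphisms generated by the ``external'' multimorphisms coming from pairs $(f,g)$ of multimorphisms in the two factors, subject to an interchange (bilinearity) relation; the unit object is the terminal multicategory with a single object and only identity multimorphisms. The correct way to pin this down is to characterize $\otimes$ by a universal property: maps $\mathcal{M} \otimes \mathcal{N} \to \mathcal{P}$ should correspond naturally to \emph{bilinear} multifunctors $\mathcal{M}, \mathcal{N} \to \mathcal{P}$, i.e.\ functions sending an object of each factor to an object of the target and a pair of multimorphisms to a multimorphism, separately multifunctorial in each variable. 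Verifying the associativity, symmetry, and unit coherences then reduces to checking that the corresponding notions of trilinear (and higher) maps agree under the associativity isomorphisms, which is bookkeeping once the universal property is in hand.

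For closedness I would construct an internal hom $\Hom(\mathcal{N}, \mathcal{P})$ whose objects are the multifunctors $\mathcal{N} \to \mathcal{P}$ and whose multimorphisms are the appropriate ``multinatural transformations,'' arranged so that there is a natural bijection between multifunctors $\mathcal{M} \to \Hom(\mathcal{N}, \mathcal{P})$ and bilinear maps out of $\mathcal{M} \otimes \mathcal{N}$, hence with multifunctors $\mathcal{M} \otimes \mathcal{N} \to \mathcal{P}$. This adjunction $\Mult(\mathcal{M} \otimes \mathcal{N}, \mathcal{P}) \cong \Mult(\mathcal{M}, \Hom(\mathcal{N}, \mathcal{P}))$ is the heart of the closed structure, and pinning down the right notion of multimorphism in the internal hom so that the adjunction holds on the nose is the step I expect to be the main obstacle: the equivariance axioms interact nontrivially with the multivariable transformations, and getting the definition exactly right is where the real work lies.

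Finally I would transfer everything to $\Multstar$. Since a based multicategory is a multicategory equipped with a map from the terminal (unit) object, $\Multstar$ is the coslice category under the unit for $\otimes$; limits and colimits in a coslice are computed as in the base (colimits directly, limits as the limit in $\Mult$ equipped with the induced basepoint), so bicompleteness descends immediately. The symmetric monoidal structure on $\Multstar$ is the induced ``smash'' product: I would define $\mathcal{M} \wedge \mathcal{N}$ as the pushout identifying the two copies of the basepoints inside $\mathcal{M} \otimes \mathcal{N}$, with the unit becoming the based unit, and the internal hom in $\Multstar$ the based variant of $\Hom$; the coherences and the based adjunction follow formally from the unbased case together with the pushout-along-unit description. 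The one point requiring attention here is checking that the based tensor is again closed, which amounts to verifying that the based internal hom represents based bilinear maps, but this is a formal consequence of the unbased adjunction and the universal property of the defining pushout.
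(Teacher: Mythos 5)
Your overall architecture -- do everything for $\Mult$ first, then transfer to $\Multstar$ -- matches the paper's, and your tensor product (universal target for bilinear maps), internal hom (multinatural transformations), and Yoneda-style coherence checks are all essentially what the paper does. Your route to cocompleteness is genuinely different: you propose monadicity over graphs or presentation as an essentially algebraic theory, whereas the paper shows the forgetful functor $\Strict\to\Mult$ from permutative categories has a left adjoint and that this adjunction is \emph{comonadic}, so $\Mult$ is a category of coalgebras over a comonad on the cocomplete category $\Strict$. Your route is more standard and would work (the free multicategory functor on $\MM$-graphs is constructed in the paper anyway, for the tensor product); the paper's route buys, as a byproduct, the adjunction with $\Strict$ that is used elsewhere. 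Your plan to get associativity by comparing notions of trilinear maps is also a legitimate alternative to the paper's enriched Hom--tensor adjunction plus Yoneda, though it hides the same technical content: one must know that $M\otimes N$ is generated by the morphisms $\phi\otimes b$ and $a\otimes\psi$ and that naturality conditions need only be checked on generators.

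The genuine gap is in your last paragraph: you identify $\Multstar$ as ``the coslice category under the unit for $\otimes$,'' writing ``the terminal (unit) object'' as though these coincide. In $\Mult$ they do not. The unit for $\otimes$ is the multicategory with one object and \emph{only the identity morphism}; the terminal multicategory $*$ has one $k$-morphism for every $k$ and is the commutative-monoid operad. A based multicategory is an object under the \emph{terminal} multicategory, so its basepoint object carries a commutative monoid structure -- this is essential for the intended application, since the basepoint of a permutative category is $0$ with its monoid structure. Taking the coslice under the unit instead produces the wrong category (multicategories with a mere chosen object), and, more to the point for this theorem, it removes exactly the difficulty that the paper's Construction 4.18 and Lemma 4.19 are designed to handle: because the basepoint is \emph{not} the unit, $a\otimes{*}$ is not isomorphic to $a$, the smash product must be the pushout of $(a\otimes *)\amalg(*\otimes b)\to a\otimes b$ along the map to $*$, the unit for $\wedge$ turns out to be the unit for $\otimes$ with a disjoint basepoint adjoined (not $*$ itself), and the adjunction $\hom_*(a\wedge b,c)\cong\hom_*(a,\hom_*(b,c))$ requires a nontrivial pullback manipulation rather than following ``formally.'' Your pushout description of the smash product is salvageable if read as collapsing $(\mathcal{M}\otimes *)\amalg(*\otimes\mathcal{N})$, but as written the transfer step rests on a false identification and understates the remaining work.
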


Our final main result extends the $K$-theory map of \cite{RMA} to
a lax symmetric monoidal map from the symmetric monoidal category
of pointed multicategories, and here too there is an improvement.
The $K$-theory map of \cite{RMA} is actually a composite: the
first piece is a multifunctor from permutative categories to what
we call $\Gstar$-categories, which form a symmetric monoidal
category $\GstarCat$.  Then the second piece is a lax symmetric
monoidal functor from $\GstarCat$ to symmetric spectra.  Our
extension result produces a lax symmetric monoidal functor $\Jhat$
from pointed multicategories to $\Gstar$-categories.  In addition,
we identify $\GstarCat$ as a category of functors from an index
category $\Gstar$ to $\Cat$ and $\Jhat$ as a representable
functor.

\begin{theorem}\label{extn}
There is a lax symmetric monoidal functor $\Jhat$ from $\Multstar$
to $\GstarCat$ such that the composite with the full and faithful
functor of Theorem \ref{emb} is naturally isomorphic to the
multifunctor $J$ constructed in \cite{RMA}.  Moreover, $\Jhat$ is
the representable functor $\Multstar(\Estar,\blank)$, where
$\Estar\colon \Gstar^{\op}\to \Multstar$.
\end{theorem}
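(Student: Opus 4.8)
The plan is to build $\Estar$ first and then obtain $\Jhat$ as the functor it represents, deducing both the lax symmetric monoidal structure and the comparison with $J$ from properties of $\Estar$. First I would construct, for each object $\mathbf{n}$ of $\Gstar$, a based multicategory $\Estar(\mathbf{n})$ whose based multifunctors into a based multicategory $M$ encode precisely the data that the construction $J$ of \cite{RMA} records at level $\mathbf{n}$: an object of $\Estar(\mathbf{n})$ should correspond to a coordinate, a multimorphism to the structural sum and coherence data, and the basepoint to the zero coordinate. Reverse-engineering the explicit definition of $J$ in this way fixes the objects, multimorphisms, composition, and basepoint of $\Estar(\mathbf{n})$; functoriality in $\mathbf{n}$, contravariantly so that $\Estar\colon\Gstar^{\op}\to\Multstar$, is then checked directly from the way morphisms of $\Gstar$ act on coordinates.

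With $\Estar$ in hand I would set $\Jhat(M)=\Multstar(\Estar(\blank),M)$, using the $\Cat$-enrichment of $\Multstar$ supplied by Theorems \ref{emb} and \ref{smcb} so that each value is a category and the assignment is a functor $\Gstar\to\Cat$, that is, an object of $\GstarCat$. Functoriality in $M$ is then automatic, giving $\Jhat\colon\Multstar\to\GstarCat$, and the representability clause of the theorem holds by construction.

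For the lax symmetric monoidal structure the key input is an oplax symmetric monoidal structure on $\Estar$, taken with respect to the smash product on $\Gstar$ and the monoidal product $\otimes$ on $\Multstar$ from Theorem \ref{smcb}: natural multifunctors $\Estar(\mathbf{m}\sma\mathbf{n})\to\Estar(\mathbf{m})\otimes\Estar(\mathbf{n})$ together with a compatible unit. Given these, the lax structure map $\Jhat(M)\boxtimes\Jhat(N)\to\Jhat(M\otimes N)$ is assembled from the (Day-type) description of the monoidal product $\boxtimes$ on $\GstarCat$: a morphism $\mathbf{m}\sma\mathbf{n}\to\mathbf{k}$ of $\Gstar$ induces $\Estar(\mathbf{k})\to\Estar(\mathbf{m}\sma\mathbf{n})\to\Estar(\mathbf{m})\otimes\Estar(\mathbf{n})$, and composing with multifunctors $\Estar(\mathbf{m})\to M$ and $\Estar(\mathbf{n})\to N$ together with the functoriality of $\otimes$ yields a multifunctor $\Estar(\mathbf{k})\to M\otimes N$. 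I would then verify the associativity, unit, and symmetry coherence diagrams for $\Jhat$, which reduce to the corresponding oplax coherence axioms for $\Estar$.

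Finally, for the comparison I would compute $\Jhat$ on the image of the embedding of Theorem \ref{emb}: writing $U$ for the underlying based multicategory construction and $\mathcal{C}$ for a permutative category, I would exhibit a natural isomorphism $\Multstar(\Estar(\mathbf{n}),U\mathcal{C})\cong J(\mathcal{C})(\mathbf{n})$, which holds essentially by the design of $\Estar$ in the first step, and check that it is natural in both $\mathbf{n}$ and $\mathcal{C}$ and compatible with the two lax monoidal structures, so that $\Jhat\circ U\cong J$ as lax symmetric monoidal functors. The main obstacle I expect is the combination of the first step with its monoidal refinement in the third: pinning down $\Estar(\mathbf{n})$ so that the functor it represents reproduces $J$ exactly, while at the same time equipping $\Estar$ with an oplax symmetric monoidal structure that matches the multiplicative structure of $J$ established in \cite{RMA}. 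Arranging for a single family $\Estar$ to serve both purposes simultaneously is the crux.
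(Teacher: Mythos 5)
Your proposal is correct and follows essentially the same route as the paper: the paper constructs the representing object concretely as $\Estar\br{\b m}=E^{m_1}\wedge\dots\wedge E^{m_k}$ for a small two-object multicategory $E$ satisfying $E\wedge E\cong E$, which simultaneously yields the contravariant functoriality in $\Gstar$, the (in fact strict) monoidal comparison $\Estar\br{\b m}\wedge\Estar\br{\b n}=\Estar(\br{\b m}\concat\br{\b n})$ driving the lax structure on $\Jhat$, and the levelwise identification $\Multstar(\Estar\br{\b m},U\C)\cong J\C\br{\b m}$. The crux you correctly identify is resolved in the paper by this one construction, with the comparison isomorphism established by an explicit induction showing that an object of $J\C\br{\b m}$ extends uniquely to a based multifunctor out of $\Estar\br{\b m}$.
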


The extension we seek is then the composite of this representable
extension with the lax symmetric monoidal piece from \cite{RMA}.  In
summary, we can speak meaningfully of the $K$-theory of a
(pointed) multicategory, and the $K$-theory of a permutative
category depends only on its underlying pointed multicategory. All
the multiplicative structure captured by the $K$-theory map of
\cite{RMA} also depends only on underlying based multicategories.

The paper is organized as follows.  In section \ref{mult}, we
describe the multicategory structure on the category of
multicategories; this is what will underlie the symmetric monoidal
structure we will describe later.  We also describe the enrichment
present, which follows easily from the internal hom construction
for multicategories.  We then extend all these constructions to
the based context.  Section \ref{perm} discusses the various
categories of permutative categories of interest to us, and
reviews the multicategory structure on permutative categories from
\cite{RMA}. We will then have sufficient tools on hand to give the
proof of Theorem \ref{emb}.  Section
\ref{complete} is devoted to the deeper structure of the category
of multicategories that allows us to show that it is symmetric
monoidal closed, complete, and cocomplete, as is its pointed
analogue.  Section
\ref{Kmult} then introduces the representing object for our
lax symmetric monoidal functor $\Jhat$, and section
\ref{extend} concludes the paper by showing our represented
functor is consistent with the one defined in \cite{RMA} when
restricted to permutative categories.

It is a pleasure to acknowledge the anonymous referees of
\cite{RMA} for a very interesting comment about partial
permutative categories.  We have, however, chosen the more drastic
route all the way to multicategories in this paper, with what we
hope are satisfying results.  We also thank the referee of this
paper for a careful reading and thorough comments that have
undoubtedly improved the paper.

\section{The Multicategory of Multicategories}
\label{mult}

The basic idea of a multicategory is very simple.  Like a
category, it has objects, but the essential difference is that the
source of a morphism is a string of objects of a specified length
(including length 0), rather than a single object. The target
remains a single object.  Consequently, to compose one must
consider strings of strings, which are then concatenated to obtain
the source of the composite.  As with operads, which are simply
multicategories with a single object, there are two flavors of
multicategory: with or without permutations.
The flavor without permutations was the original one introduced by
Lambek~\cite{Lambek}.
We will be concerned
almost exclusively with the flavor \emph{with} permutations and so
we will call these simply ``multicategories'' rather than
``symmetric multicategories'' as in, for example, \cite[A.2]{L}.  The
technicalities of
the definition are now fairly straightforward, and are as follows.

\begin{definition}\label{MultDef}  A \emph{multicategory} $\M$
consists of the following:
\newcounter{multdef}
\begin{list}{(\arabic{multdef})}{\usecounter{multdef}}
\item  A collection of objects, which may form a proper class,
\item For each $k\ge
0$, $k$-tuple of objects $(a_1,\ldots,a_k)$ (the ``source'') and
single object $b$ (the ``target''), a set $\M_k(a_1,\ldots,a_k;b)$
(the ``$k$-morphisms''),
\item
A right action of $\Sigma_k$ on the collection of all
$k$-morphisms, where for $\sigma\in\Sigma_k$,
$$\sigma^*:\M_k(a_1,\ldots,a_k;b)
\to\M_k(a_{\sigma(1)},\ldots,a_{\sigma(k)};b),
$$
\item
A distinguished ``unit'' element $1_a\in\M_1(a;a)$ for each object
$a$, and
\item A ``composition''
\begin{gather}
\!\!\!\!\!\!\multprod:\;\M_n(b_1,\dotsc,b_n;c)\times
\M_{k_1}(a_{11},\dotsc,a_{1k_1};b_1)\times
\cdots\times\M_{k_n}(a_{n1},\dotsc,a_{nk_n};b_n)\notag\\
\relax\longrightarrow\M_{k_1+\cdots+k_n}(a_{11},\dotsc,a_{nk_n};c),
\notag
\end{gather}
\end{list}
all subject to the identities for an operad listed on pages 1--2
in
\cite{G}, which still make perfect sense in this context.
For greater detail, we refer the reader to \cite{RMA}, Definition
2.1. A \emph{multifunctor} is a structure preserving map of
multicategories.
\end{definition}

As with categories, if the objects of a multicategory form a set,
we call it \emph{small}; otherwise it is \emph{large}.  We obtain
the category $\Mult$ whose objects are all small multicategories.
We note that restricting attention to $1$-morphisms gives an
underlying category for each multicategory.

We may also ask that the $k$-morphisms $\M_k(a_1,\dots,a_k;b)$
take values in a symmetric monoidal category other than sets,
giving us the concept of an \emph{enriched} multicategory.  The
enriched multicategories of interest to us are all large, and
generally enriched over $\Cat$; this will be the case for $\Mult$,
in particular.

A basic observation is that if we let $\M$ be a full subcategory
of a symmetric monoidal category $(\C,\oplus,0)$, then $\M$
becomes a multicategory by defining
$$\M_k(a_1,\dots,a_k;b):=\C(a_1\oplus\dots\oplus a_k,b)
$$
(for a fixed choice of association).  The same observation also
holds if $\M$ is merely the source of a full and faithful functor
to $\C$. In particular, $0$-morphisms in $\M$ are given by
morphisms in $\C$ out of the monoidal unit $0$. As a consequence,
every symmetric monoidal category has an underlying multicategory,
and it is an interesting exercise to check that the maps of underlying
multicategories between two symmetric
monoidal categories are the lax symmetric monoidal maps (see Section
\ref{perm}).

We begin our description of the additional structure on $\Mult$ by
observing that the multifunctors between two multicategories are
themselves the objects of a multicategory.  These internal Hom
multicategories will give $\Mult$ the
structure of a closed category. It is crucial for the construction that
we use multicategories \emph{with} permutations; this construction
does not extend to non-symmetric multicategories.  For
notational convenience, we will often write lists such as
$(c_1,\dots,c_k)$ as $\br{c_i}_{i=1}^k$, or even just $\br{c_i}$
when the limits are clear.

\begin{definition}\label{Homs} We define an internal Hom
object in $\Mult$ as follows.  Given small multicategories $M$ and
$N$, we define $\Hom(M,N)$ to be a multicategory with objects the
multifunctors from $M$ to $N$.  Given a source $k$-tuple
$(f_1,\ldots,f_k)$ of multifunctors and a target multifunctor $g$,
we define a $k${\sl-natural transformation} from
$(f_1,\ldots,f_k)$ to $g$ to be a function $\xi$ that assigns to
each object $a$ of $M$ a $k$-morphism $\xi_a:(f_1a,\ldots,f_ka)\to
ga$ of $N$, such that for any $m$-morphism
$\phi:(a_1,\ldots,a_m)\to b$ in $M$, the following diagram
commutes:
$$\xymatrix{
\br{\br{f_ja_i}_{j=1}^k}_{i=1}^m\ar[r]^-{\br{\xi_{a_i}}}
\ar[d]_-{\cong}
&\br{ga_i}_{i=1}^m\ar[dd]^-{g\phi}
\\ \br{\br{f_ja_i}_{i=1}^m}_{j=1}^k \ar[d]_-{\br{f_j\phi}}
\\ \br{f_jb}_{j=1}^k \ar[r]_-{\xi_b}
&gb.}
$$
Here the unlabelled isomorphism is the permutation that reverses
the priority of the indices $i$ and $j$, i.e., it shuffles $m$
blocks of $k$ entries each into $k$ blocks of $m$ entries each.
The $k$-natural transformations are then the $k$-morphisms in the
multicategory $\Hom(M,N)$.  The composition is
induced from the composition in $N$, as are the actions by the
symmetric groups.  The reader can now verify that the axioms for a
multicategory are satisfied.
\end{definition}

We wish to define a multicategory structure on $\Mult$, and the
crucial step is the definition of $2$-morphisms, which we call
\emph{bilinear maps}.  Given multicategories $M,N$ and $P$, the
idea of a bilinear map $f:(M,N)\to P$ is a map that is
multifunctorial in each variable separately, such that the
multifunctors in each variable are ``multinatural'' in a way that
mirrors our definition of Hom multicategories.  The precise
definition follows.

\begin{definition}\label{bilin} Let $M$, $N$, and $P$ be
multicategories.  A {\emph{bilinear map}} $f:(M,N)\to P$ consists
of:
\begin{enumerate}
\item
A function $f:\obj M\times\obj N\to\obj P$,
\item
For each $m$-morphism $\phi:(a_1,\ldots,a_m)\to a'$ of $M$ and
object $b$ of $N$, an $m$-morphism
$f(\phi,b):(f(a_1,b),\ldots,f(a_m,b))\to f(a',b)$ of $P$,
\item
For each $n$-morphism $\psi:(b_1,\ldots,b_n)\to b'$ of $N$ and
object $a$ of $M$, an $n$-morphism
$f(a,\psi):(f(a,b_1),\ldots,f(a,b_n))\to f(a,b')$ of $P$,
\end{enumerate}
such that
\begin{enumerate}
\item
For each object $a$ of $M$, $f(a,\blank)$ is a multifunctor from
$N$ to $P$,
\item
For each object $b$ of $N$, $f(\blank,b)$ is a multifunctor from
$M$ to $P$,
\item
Given an $m$-morphism $\phi:(a_1,\ldots,a_m)\to a'$ in $M$ and an
$n$-morphism $\psi:(b_1,\ldots,b_n)\to b'$ in $N$, the following
diagram commutes, where the unlabelled isomorphism is given by the
permutation that reverses the priority of the indices:
$$\xymatrix{
\br{\br{f(a_i,b_j)}_{i=1}^m}_{j=1}^n \ar[r]^-{\br{f(\phi,b_j)}}
\ar[d]_-{\cong}
&\br{f(a',b_j)}_{j=1}^n\ar[dd]^-{f(a',\psi)}
\\ \br{\br{f(a_i,b_j)}_{j=1}^n}_{i=1}^m \ar[d]_-{\br{f(a_i,\psi)}}
\\ \br{f(a_i,b')}_{i=1}^m\ar[r]_-{f(\phi,b')}
&f(a',b').}
$$
\end{enumerate}
We write the set of such bilinear maps as $\Bilin(M,N;P)$.
\end{definition}

It is worth remarking that a bilinear map $f:(M,N)\to P$ gives, by
restriction to $1$-morphisms, a functor $f:M\times N\to P$ on
underlying categories, but that a multifunctor $f:M\times N\to P$
is quite different from a bilinear map: a multifunctor $M\times
N\to P$ assigns a $k$-morphism in $P$ to each pair of
$k$-morphisms from $M$ and $N$, while a bilinear map assigns an
$mn$-morphism given by the common diagonal of the above diagram to
each pair consisting of an $m$-morphism in $M$ and an $n$-morphism
in $N$.

A key point is that $\Bilin(M,N;P)$ gives the objects of a
multicategory naturally isomorphic to both $\Hom(M,\Hom(N,P))$ and
$\Hom(N,\Hom(M,P))$.  It is routine to verify that the morphisms
in this multicategory are given as follows.

\begin{definition}\label{110} Suppose given bilinear maps $f_1,\dots,f_k$
and $g$ in $\Bilin(M,N;P)$.  Then a $k$-morphism
$\xi:(f_1,\dots,f_k)\to g$ is a $k$-natural transformation in each
variable: it consists of a choice of $k$-morphism
$\xi_{(a,b)}:(f_1(a,b),\dots,f_k(a,b))\to g(a,b)$ in $P$ for each
pair of objects $(a,b)$ in $\Ob(M)\times\Ob(N)$, such that for
each morphism $\phi:(a_1,\dots,a_m)\to a'$ of $M$ and each
morphism $\psi:(b_1,\dots,b_n)\to b'$ of $N$ the following pair of
diagrams commutes:
$$\xymatrix{
\br{\br{f_j(a_i,b)}_{i=1}^m}_{j=1}^k \ar[r]^-{\br{f_j(\phi,b)}}
\ar[d]_-{\cong}
&\br{f_j(a',b)}_{j=1}^k \ar[dd]^-{\xi_{(a',b)}}
\\ \br{\br{f_j(a_i,b)}_{j=1}^k}_{i=1}^m
\ar[d]^-{\br{\xi_{(a_i,b)}}}
\\ \br{g(a_i,b)}_{i=1}^m \ar[r]^-{g(\phi,b)}
&g(a',b),}
\qquad
\xymatrix{
\br{\br{f_j(a,b_i)}_{i=1}^n}_{j=1}^k \ar[r]^-{\br{f_j(a,\psi)}}
\ar[d]_-{\cong}
&\br{f_j(a,b')}_{j=1}^k \ar[dd]^-{\xi_{(a,b')}}
\\ \br{\br{f_j(a,b_i)}_{j=1}^k}_{i=1}^n
\ar[d]^-{\br{\xi_{(a,b_i)}}}
\\ \br{g(a,b_i)}_{i=1}^n \ar[r]^-{g(a,\psi)}
&g(a,b').}
$$
\end{definition}

\begin{corollary} With morphisms as in Definition
\ref{110}, $\Bilin(M,N;P)$ forms a multicategory with natural
isomorphisms of multicategories
$$\Hom(M,\Hom(N,P))\cong\Bilin(M,N;P)\cong\Hom(N,\Hom(M,P)).
$$
\end{corollary}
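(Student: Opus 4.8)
The plan is to prove both assertions at once by constructing an explicit bijection between the data defining an object (resp.\ a $k$-morphism) of $\Bilin(M,N;P)$ and those of $\Hom(M,\Hom(N,P))$, and then transporting the multicategory structure of the latter---already available from Definition~\ref{Homs}---across this bijection; this simultaneously exhibits $\Bilin(M,N;P)$ as a multicategory and identifies it with $\Hom(M,\Hom(N,P))$. On objects I would send a multifunctor $F\colon M\to\Hom(N,P)$ to the bilinear map $f$ with $f(a,b)=F(a)(b)$, with $f(a,\psi)=F(a)(\psi)$ the image of an $N$-morphism $\psi$ under the multifunctor $F(a)$, and with $f(\phi,b)=F(\phi)_b$ the component at $b$ of the $m$-natural transformation $F(\phi)$ that $F$ assigns to an $M$-morphism $\phi$. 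Then bilinear condition~(1) of Definition~\ref{bilin} is precisely the assertion that each $F(a)$ is a multifunctor; bilinear condition~(2) is the multifunctoriality of $F$ read off one object $b$ of $N$ at a time; and---this is the crucial identification---bilinear condition~(3) is \emph{verbatim} the naturality square required of the $m$-natural transformation $F(\phi)$ in Definition~\ref{Homs}, once one substitutes the three formulas above. The inverse sends a bilinear $f$ to the multifunctor $F$ with $F(a)=f(a,\blank)$ and $F(\phi)$ the family $b\mapsto f(\phi,b)$: condition~(1) makes each $F(a)$ a multifunctor, condition~(3) makes $F(\phi)$ an $m$-natural transformation, and condition~(2) makes $\phi\mapsto F(\phi)$ multifunctorial, so the two assignments are mutually inverse.

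On $k$-morphisms, a $k$-morphism $\xi$ of $\Hom(M,\Hom(N,P))$ from $(f_1,\dots,f_k)$ to $g$ assigns to each object $a$ of $M$ a component $\xi_a$ that is itself a $k$-natural transformation for $N\to P$; I would set $\xi_{(a,b)}=(\xi_a)_b$. Under the identifications above, the two commuting squares demanded in Definition~\ref{110} match the two layers of naturality present here: the second square (over morphisms $\psi$ of $N$) is the naturality of each component $\xi_a$ as a $k$-natural transformation for $N\to P$, while the first square (over morphisms $\phi$ of $M$) is the naturality of $\xi$ itself over $M$, evaluated at the object $b$. This yields a bijection on $k$-morphisms. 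Because units, symmetric group actions, and composition are on both sides built pointwise from the corresponding operations of $P$, the bijection carries each structure map to its counterpart; hence it is an isomorphism of multicategories, and in particular $\Bilin(M,N;P)$ inherits a genuine multicategory structure with composition induced from $P$, exactly as anticipated in Definition~\ref{110}.

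The second isomorphism $\Bilin(M,N;P)\cong\Hom(N,\Hom(M,P))$ I would obtain by running the same construction with the roles of $M$ and $N$ interchanged, now setting $H(b)=f(\blank,b)$ and reading condition~(3) the other way around. I expect the main obstacle to be purely one of bookkeeping with the priority-reversing permutations: one must check that the unlabelled isomorphism in Definition~\ref{bilin}~(3) is exactly the shuffle produced by Definition~\ref{Homs} (for the first isomorphism it appears as stated, while for the second it appears as its inverse, since interchanging $M$ and $N$ transposes the square), and, most importantly, that the two transported multicategory structures on $\Bilin(M,N;P)$---the one from $\Hom(M,\Hom(N,P))$ and the one from $\Hom(N,\Hom(M,P))$---coincide, so that the displayed chain is coherent rather than merely two separate isomorphisms onto the same underlying collection. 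This consistency is precisely the symmetry of the bilinear data under swapping the two variables together with the block-transposition permutation, and verifying it is where the shuffles must be tracked with care. Finally, since every formula used is defined by composition with the given multifunctors and natural transformations, all assignments are natural in $M$, $N$, and $P$, so the isomorphisms are natural as claimed.
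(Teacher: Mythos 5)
Your proposal is correct and is exactly the ``routine verification'' the paper invokes: unwinding a multifunctor $M\to\Hom(N,P)$ into the data of Definition~\ref{bilin}, matching condition~(3) with the naturality square of Definition~\ref{Homs}, identifying the two squares of Definition~\ref{110} with the two layers of $k$-naturality, and noting that all compositions are induced pointwise from $P$ so the two transported structures agree. The paper offers no further argument, so there is nothing to compare beyond this.
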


The definition of a $k$-linear map of multicategories for $k\ge2$
presents no further difficulties, as it is merely a map that is
multifunctorial in each variable separately, and bilinear in each
pair of variables separately.

\begin{definition}\label{klin}
Let $M_1,\dots,M_k$ and $N$ be multicategories.  A
\emph{$k$-linear map} $f:(M_1,\dots,M_k)\to N$ consists of:
\begin{enumerate}
\item A function $\Ob(M_1)\times\dots\times\Ob(M_k)\to\Ob(N)$
\item For each $m$-morphism $\phi_j:(a_{j1},\dots,a_{jm})\to a_j'$
in $M_j$ and choices of objects $b_i$ in $M_i$ for $i\ne j$, an
$m$-morphism
\begin{align}
&f(b_1,\dots,b_{j-1},\phi,b_{j+1},\dots,b_k)\notag
\\:&(f(b_1,\dots,b_{j-1},a_{j1},b_{j+1},\dots,b_k),
\dots,f(b_1,\dots,b_{j-1},a_{jm},b_{j+1},\dots,b_k))
\notag
\\ \to &f(b_1,\dots,b_{j-1},a',b_{j+1},\dots,b_k)\notag
\end{align}
\end{enumerate}
such that
\begin{enumerate}
\item
$f$ is multifunctorial in each variable separately, and
\item
$f$ is bilinear in each pair of variables separately.
\end{enumerate}
\end{definition}

Finally, we state explicitly that a $0$-morphism to a multicategory
$N$ consists of a choice of object of $N$.  With these definitions
in place, it is now possible to verify directly that $\Mult$
becomes a multicategory with the evident notion of composition;
however, the details are unnecessary, since we will show in Section
\ref{complete} that $\Mult$ actually supports the structure of a symmetric
monoidal closed category of which this is the underlying
multicategory. We remark that the Hom construction introduced here
will give the closed part of this structure.  In particular,
remembering only the $1$-morphisms in the Hom construction gives
$\Mult$ an enrichment over $\Cat$, which we use in our extension of
the $K$-theory functor in Section~\ref{Kmult}.

We turn next to the multicategory of main interest in this paper,
whose objects are \emph{based} multicategories.  We note first
that $\Mult$ has a terminal object, namely the multicategory with
one object and one $k$-morphism for each $k\ge0$.  We will denote
this terminal multicategory by the usual unilluminating $*$. Since
it has only one object, it is an operad. In fact, it is the operad
that parametrizes commutative monoids in a multicategory; i.e., a
multifunctor from $*$ to a multicategory $M$ consists of an object
in $M$ and a commutative monoid structure on that object.

\begin{definition}
A \emph{based multicategory} consists of a multicategory $M$,
together with a preferred multifunctor $*\to M$.  A \emph{based
multifunctor} is a multifunctor preserving the basepoint
structure.
\end{definition}

We obtain a category $\Multstar$ of based multicategories.  This
will be the source of our extended $K$-theory functor, and one of
the main purposes of this paper is to show that it is actually
symmetric monoidal, closed, complete, cocomplete, and that the
extended $K$-functor is lax symmetric monoidal.  We content ourselves in
this section with a description of the underlying multicategory
structure.

We say that an object or morphism in a based multicategory
\emph{comes from the basepoint} if it is in the image of the given
map from $*$.

\begin{definition}
Let $M$, $N$, and $P$ be based multicategories.  A \emph{based
bilinear map} $f:(M,N)\to P$ is a bilinear map of the underlying
(unbased) multicategories such that if input data from either
variable comes from the basepoint, the output also comes from the
basepoint.
\end{definition}

We also define a $0$-morphism to a based multicategory to be
simply a $0$-morphism to the underlying unbased multicategory,
that is, a choice of object.  (As an analogue, think of a
$0$-morphism to a pointed topological space as a based map from
the unit of the smash product, $S^0$.)  It is now straightforward
to extend the definition of a $k$-linear map of multicategories
(Definition
\ref{klin}) to the based context by attaching the word ``based''
where appropriate.  Note also that the morphism induced by a
$\phi_j$ in $M_j$ must come from the basepoint whenever  $\phi_j$
or any of the $b_i$ come from the basepoint.  It is now
straightforward, but tedious and unnecessary, to verify that we do
get a multicategory of based multicategories -- again unnecessary
because we will show that $\Multstar$ actually supports a
symmetric monoidal structure for which this is the underlying
multicategory structure.

We close this section with some remarks on enrichment.  We
will show in Section~\ref{complete} that both $\Mult$ and $\Multstar$
are symmetric
monoidal closed, and so in particular are enriched over themselves.
We obtain an enrichment over $\Cat$ from the lax symmetric
monoidal forgetful functors
$$\Multstar\to\Mult\to\Cat,
$$
where $\Cat$ is given its
Cartesian monoidal structure.  This is the enrichment of greatest
topological significance, although we will use the full
enrichment of $\Mult$ over itself to show that $\Multstar$ is
symmetric monoidal closed.

\section{Permutative Categories}
\label{perm}

In this section we introduce the categories of permutative
categories we will need, review the multicategory structure on
permutative categories from \cite{RMA}, and prove that this
multicategory of permutative categories admits a full and faithful
multifunctor to $\Multstar$.

A permutative category (also called a ``symmetric strict monoidal
category'') is a more rigid version of a symmetric
monoidal category: it is as rigid as possible without giving up
homotopical generality.  The precise definition is as follows.

\begin{definition}\label{PermCat}
A \emph{permutative category} is a category $\C$ with a functor
$\oplus\colon \C\times\C\to\C$, an object $0\in\obj\C$, and a
natural isomorphism $\gamma\colon a\oplus b\cong b\oplus a$
satisfying:
\begin{enumerate}
\item $(a\oplus b)\oplus c=a\oplus(b\oplus c)$ (strict associativity),
\item $a\oplus0=a=0\oplus a$ (strict unit), and
\item The following three diagrams must commute:
$$\xymatrix{a\oplus 0\ar[rr]^\gamma_\cong\ar[dr]_=&&0\oplus a\ar[dl]^=\\
&a,\\}
\qquad\qquad
\xymatrix{a\oplus b\ar[rr]^=\ar[dr]^\cong_\gamma&&a\oplus b\\
&b\oplus a,\ar[ur]^\cong_\gamma\\}
$$
$$\xymatrix{a\oplus b\oplus c\ar[rr]^\gamma\ar[dr]_{1\oplus\gamma}&&c\oplus
a\oplus b\\ &a\oplus c\oplus b.\ar[ur]_{\gamma\oplus 1}\\}
$$
\end{enumerate}
(Conditions (1) and (2) of course hold for both objects and morphisms.)
\end{definition}

There are several reasonable ways to define maps between
permutative categories, of which we shall need to make use of two.
First, there are the \emph{strict} maps.

\begin{definition}\label{StrictMap}
A {\emph{strict map}} of permutative categories $f:\C\to\D$ is a
functor for which $f(a\oplus b)=fa\oplus fb$ (for both objects and
morphisms), $f(0)=0$, and the
following diagram commutes:
$$\xymatrix{
f(a\oplus b)\ar[r]^-{=}\ar[d]_-{f(\gamma)} &fa\oplus
fb\ar[d]^-{\gamma}
\\f(b\oplus a)\ar[r]^-{=}
&fb\oplus fa.}
$$
\end{definition}

We obtain a category $\Strict$ of permutative categories and
strict maps.  We will make use of this category in the next
section in our proofs that $\Mult$ and $\Multstar$ are cocomplete.

Second, every permutative category $\C$ has an underlying
multicategory, and we can define a \emph{lax map} of permutative
categories to be a map of the underlying multicategories.  (This
applies to symmetric monoidal categories as well.)  A lax map
$f:\C\to\D$ can be expressed explicitly as a functor together with
natural maps
$$\eta:0\to f(0)\quad\hbox{and}\quad\lambda: f(a)\oplus f(b)\to
f(a\oplus b)
$$
subject to some coherence diagrams.  However, the fact that $f(0)$
can be distinct from 0 causes problems with basepoint control, and
consequently we will make no use of this sort of map.

Instead, the other sort of map of permutative categories we will
use exploits the fact that a permutative category actually has an
underlying \emph{based} multicategory.  The basepoint is given by
0 and all the identifications $0\oplus\dots\oplus0=0$.  We could
now define a \emph{\laxstar\ map} of permutative categories to be
a map of underlying based multicategories.  However, for
consistency with \cite{RMA}, we give the explicit description, and
the claim that this \emph{is} a map of underlying based
multicategories will follow from Theorem \ref{emb} by restriction
to $1$-morphisms. Note that these morphisms were erroneously called
``lax'' in \cite{RMA}.

\begin{definition}\label{laxstar}
Let $\C$ and $\D$ be permutative categories.  A \emph{\laxstar\
map} $f:\C\to\D$ is a functor such that $f(0)=0$, together with a
natural transformation
$$\lambda:f(a)\oplus f(b)\to f(a\oplus b),
$$
such that
\begin{enumerate}
\item
$\lambda=\id$ if either $a$ or $b$ are 0, and
\item
the following two diagrams commute:
$$\xymatrix{
f(a)\oplus f(b)\oplus f(c)\ar[r]^-{1\oplus\lambda}
\ar[d]_-{\lambda\oplus1}
&f(a)\oplus f(b\oplus c)\ar[d]^-{\lambda}
\\f(a\oplus b)\oplus f(c)\ar[r]_-{\lambda}
&f(a\oplus b\oplus c),}
\qquad
\xymatrix{
f(a)\oplus f(b)\ar[r]^-{\lambda}\ar[d]_-{\gamma}
&f(a\oplus b)\ar[d]^-{f(\gamma)}
\\f(b)\oplus f(a)\ar[r]_-{\lambda}
&f(b\oplus a).}
$$
\end{enumerate}
\end{definition}

We obtain a category $\P$ of permutative categories and \laxstar\
maps, which in fact supports the structure of a multicategory
enriched over $\Cat$.  We review the definitions from \cite{RMA}.

\begin{definition}\label{num8}
Let $\C_1,\dotsc,\C_k$ and $\D$
be small permutative categories.  We define categories
$\klin(\C_1,\dotsc,\C_k;\D)$ that provide the categories of
$k$-morphisms for the multicategory $\P$ of permutative categories
as follows. The objects of $\klin(\C_1,\dotsc,\C_k;\D)$ consist of
functors
$$f\colon \C_1\times\cdots\times\C_k\to\D,
$$
which we think of as $k$-linear maps, satisfying
$f(c_1,\dotsc,c_k)=0$ if any of the $c_i$ are 0, together with
natural transformations for $1\le i\le k$, which we think of as
distributivity maps,
$$\delta_i\colon f(c_1,\dotsc,c_i,\dotsc,c_k)\oplus
f(c_1,\dotsc,c_i',\dotsc,c_k)\to f(c_1,\dotsc,c_i\oplus
c_i',\dotsc,c_k).
$$
We conventionally suppress the variables that do not change,
writing
$$\delta_i\colon f(c_i)\oplus f(c_i')\to f(c_i\oplus c_i').
$$
We require $\delta_i=\id$ if either $c_i$ or $c_i'$ is 0, or if
any of the other $c_j$'s are 0. These natural transformations are
subject to the commutativity of the following diagrams:
$$\xymatrix{
f(c_i)\oplus f(c_i')\oplus f(c_i'')\ar[d]_-{\delta_i\oplus 1}
\ar[r]^-{1\oplus\delta_i} &f(c_i)\oplus f(c_i'\oplus c_i'')
\ar[d]^-{\delta_i}
\\
f(c_i\oplus c_i')\oplus f(c_i'')\ar[r]_-{\delta_i} &f(c_i\oplus
c_i'\oplus c_i''),\\}
\qquad
\xymatrix{
f(c_i)\oplus f(c_i')\ar[d]_-{\gamma}^-{\cong} \ar[r]^-{\delta_i}
&f(c_i\oplus c_i') \ar[d]^-{f(\gamma)}_-{\cong}
\\
f(c_i')\oplus f(c_i)\ar[r]_-{\delta_i} &f(c_i'\oplus c_i),
\\}
$$
and for $i\ne j$,
$$\xymatrix@C-80pt{&f(c_i\oplus c_i',c_j)\oplus f(c_i\oplus c_i',c_j')
\ar[ddr]^-{\delta_j}
\\
f(c_i,c_j)\oplus f(c_i',c_j)\oplus f(c_i,c_j')\oplus f(c_i',c_j')
\ar[ur]^-{\delta_i\oplus\delta_i}
\ar[dd]_-{1\oplus\gamma\oplus1}^{\cong}
\\
&&f(c_i\oplus c_i',c_j\oplus c_j').
\\
f(c_i,c_j)\oplus f(c_i,c_j')\oplus f(c_i',c_j)\oplus f(c_i',c_j')
\ar[dr]_{\delta_j\oplus\delta_j}
\\
&f(c_i,c_j\oplus c_j')\oplus f(c_i',c_j\oplus c_j')
\ar[uur]_-{\delta_i}}
$$
We explicitly define $\P_0(;\D)$ to be the category $\D$. This
completes the definition of the objects of
$\klin(\C_1,\dotsc,\C_k;\D)$.  To specify its morphisms, given two
objects $f$ and $g$, a morphism $\phi\colon f\to g$ is a natural
transformation commuting with all the $\delta_i$'s, in the sense
that all the diagrams
$$\xymatrix{f(c_i)\oplus f(c_i')\ar[d]_-{\phi\oplus\phi}
\ar[r]^-{\delta_i^f} & f(c_i\oplus c_i') \ar[d]^-{\phi}
\\
g(c_i)\oplus g(c_i') \ar[r]_-{\delta_i^g} &g(c_i\oplus c_i')
\\}$$
commute.  We also require that $\phi(c_1,\dotsc,c_k)=\id_0$
whenever any of the $c_i=0$.

In order to make the $\klin(\C_1,\dotsc,\C_k;\D)$'s the
$k$-morphisms of a multicategory, we must specify a $\Sigma_k$
action and a composition.  The $\Sigma_k$ action
$$\sigma^*f\colon \C_{\sigma(1)}\times\cdots\times\C_{\sigma(k)}
\to\D
$$
is specified by
$$\sigma^*f(c_{\sigma(1)},\dotsc,c_{\sigma(k)})
=f(c_1,\dotsc,c_k),
$$
with the structure maps $\delta_i$ inherited from $f$ (with the
appropriate permutation of the indices).  We define the
composition as follows: Given $f_j\colon
\C_{j1}\times\cdots\times\C_{jk_j}\to \D_j$ for $1\le j\le n$ and
$g\colon \D_1\times\cdots\times\D_n\to\E$, we define
$$\multprod(g;f_1,\dotsc,f_n):=g\circ(f_1\times\cdots\times f_n).$$
To specify the structure maps, suppose $k_1+\cdots+k_{j-1}<s\le
k_1+\cdots+k_j$, and let $i=s-(k_1+\cdots+k_{j-1})$.  Then
$\delta_s$ is given by the composite
$$\xymatrix{
g(f_j(c_{ji}))\oplus g(f_j(c'_{ji}))\ar[r]^-{\delta^g_j}
&g(f_j(c_{ji})\oplus f_j(c'_{ji}))\ar[r]^-{g(\delta^{f_j}_i)}
&g(f_j(c_{ji}\oplus c'_{ji})).
}$$
\end{definition}

Once we have verified that this structure maps fully and
faithfully to the multicategory $\Multstar$, it will follow that
this does define a multicategory structure on $\P$, although this
can also be done directly.  We remark that in the context of
multifunctors, ``full and faithful'' means that the multifunctor
induces a bijection (or isomorphism in the enriched context) on
the morphism sets for any particular choice of source and target.
We can now give the proof of the first of our main theorems.

\begin{proof}[Proof of Theorem \ref{emb}]
Given a permutative category $\C$, let $U\C$ be its underlying
based multicategory.  Then the claim is that $U$ extends to a full
and faithful multifunctor
$$U:\P\to\Multstar,
$$
enriched over $\Cat$.  We begin by defining $U$ on the $1$-morphisms
of $\P$, which are the \laxstar\ morphisms.  Note that by the
associativity diagram for the structure map of a \laxstar\
morphism, it induces a canonical map
$$\xymatrix{
\d\bigoplus_{i=1}^k f(a_i)\ar[r]^-{\lambda}
&f(\bigoplus_{i=1}^k a_i).}
$$
Given such a \laxstar\ morphism, we define the induced
multifunctor on the underlying based multicategories $Uf:U\C\to
U\D$ as having the same map on objects, and given a $k$-morphism
$\phi:a_1\oplus\dots\oplus a_k\to a'$ in $U\C$, we define
$Uf(\phi)$ to be the composite
$$\xymatrix{
f(a_1)\oplus\dots\oplus f(a_k) \ar[r]^-{\lambda}
&f(a_1\oplus\dots\oplus a_k) \ar[r]^-{f(\phi)}
&fa'.}
$$
More generally, given an $n$-morphism of permutative categories
$f:(\C_1,\dots,\C_n)\to\D$, we need to specify a based $n$-linear
map $Uf:(U\C_1,\dots,U\C_n)\to U\D$.  From the map being \laxstar\
in each variable separately we get a map on underlying
multicategories that is multifunctorial in each variable
separately. From the map being identically 0 whenever any input is
0 we get the basepoint condition. The only issue remaining is
whether the pentagonal diagram relating lax morphism structure
maps generates the diagram relating the variables in a bilinear
map. Since all the conditions refer to only two variables at a
time, we can reduce to the case of only two variables, and the
diagram for a bilinear map reduces in this case to the following
one:
$$\xymatrix{
\d\bigoplus_{i=1}^m\bigoplus_{j=1}^n f(a_i,b_j)
\ar[r]^-{\lambda_2} \ar[d]^-{\cong}
&{\d\bigoplus_{i=1}^n} f(a_i,\bigoplus_{j=1}^n b_j)
\ar[dd]^-{\lambda_1}
\\ \d\bigoplus_{j=1}^n\bigoplus_{i=1}^m f(a_i,b_j)
\ar[d]^-{\lambda_1}
\\ {\d\bigoplus_{j=1}^n} f(\bigoplus_{i=1}^m a_i,b_j)
\ar[r]^-{\lambda_2}
&f(\bigoplus_{i=1}^m a_i,\bigoplus_{j=1}^n b_j).}
$$
The diagram relating the lax structure maps in a $2$-morphism of
permutative categories is precisely the case $m=n=2$ of this
diagram.  Note that reversing the roles of $m$ and $n$ does not
affect the diagram.  We now proceed by induction, first holding
$n=2$ and inducting on $m$, then reversing the roles of $m$ and
$n$ to conclude that the diagram commutes for $m=2$ and arbitrary
$n$, and finally inducting on $m$ again for a fixed, but arbitrary
$n$.  This is all accomplished by Figure
\ref{Fi:1}, which displays the desired diagram for
index limits $m+1$ and $n$ and has subdiagrams for index limits
$(m,n)$ and $(2,n)$.
\begin{figure}
$$\xymatrix@C=-70pt @R=+5pt
{\d\bigoplus_{i=1}^{m+1}\bigoplus_{j=1}^n
f(a_i,b_j)\ar[rr]^-{\lambda_2}\ar[dd]_-{=}
&&{\d\bigoplus_{i=1}^{m+1}}f(a_i,\bigoplus_{j=1}^n b_j)
\ar[dd]^-{=}
\\&{\d\bigoplus_{i=1}^m} f(a_i,\bigoplus_{j=1}^n
b_j)\oplus{\d\bigoplus_{j=1}^n} f(a_{m+1},b_j)
\ar[dr]^-{1\oplus\lambda_2} \ar[dd]_(.65){\lambda_1\oplus1}
\\ {\d\bigoplus_{i=1}^m\bigoplus_{j=1}^n}
f(a_i,b_j)\oplus{\d\bigoplus_{j=1}^n}
f(a_{m+1},b_j)\ar[ur]^-{\lambda_2\oplus1} \ar[dd]_-{\cong}
&&{\d\bigoplus_{i=1}^m} f(a_i,\bigoplus_{j=1}^n b_j)\oplus
f(a_{m+1},\bigoplus_{j=1}^n b_j)\ar[dd]^-{\lambda_1\oplus1}
\\&f(\bigoplus_{i=1}^m a_i,\bigoplus_{j=1}^n b_j)\oplus
{\d\bigoplus_{j=1}^n} f(a_{m+1},b_j)\ar[dr]^-{1\oplus\lambda_2}
\\ {\d\bigoplus_{j=1}^n\bigoplus_{i=1}^m} f(a_i,b_j)\oplus
{\d\bigoplus_{j=1}^n} f(a_{m+1},b_j)\ar[dr]^-{\lambda_1\oplus1}
\ar[dd]_-{\cong}
&&f(\bigoplus_{i=1}^m a_i,\bigoplus_{j=1}^n b_j)\oplus
f(a_{m+1},\bigoplus_{j=1}^n b_j)\ar[dddd]^-{\lambda_1}
\\&{\d\bigoplus_{j=1}^n} f(\bigoplus_{i=1}^m a_i,b_j)\oplus
{\d\bigoplus_{j=1}^n} f(a_{m+1},b_j)
\ar[uu]^(.7){\lambda_2\oplus1}
\ar[ur]_-{\lambda_2\oplus\lambda_2}
\ar[dd]^-{\cong}
\\ {\d\bigoplus_{j=1}^n}\left({\d\bigoplus_{i=1}^m} f(a_i,b_j) \oplus
f(a_{m+1},b_j)\right)\ar[dr]^-{\lambda_1}\ar[dd]_-{=}
\\&{\d\bigoplus_{j=1}^n}\left(f(\bigoplus_{i=1}^m a_i,b_j)\oplus
f(a_{m+1},b_j)\right)\ar[dd]^-{\lambda_1}
\\{\d\bigoplus_{j=1}^n\bigoplus_{i=1}^{m+1}} f(a_i,b_j)
\ar[dr]^-{\lambda_1}
&&f(\bigoplus_{i=1}^{m+1}a_i,\bigoplus_{j=1}^n b_j)
\\&{\d\bigoplus_{j=1}^n} f(\bigoplus_{i=1}^{m+1}a_i,b_j)
\ar[ur]^-{\lambda_2}
 }$$
\caption{}\label{Fi:1}
\end{figure}

We have finished describing the multifunctor structure of
$U:\P\to\Multstar$, and we leave to the reader the task of
checking the necessary preservation properties.  It remains to
show that $U$ is full and faithful. To see that it is faithful,
we observe that the objects of $U\C$ are the same as those of
$\C$, and the morphisms of $\C$ are simply the $1$-morphisms of
$U\C$.  We can therefore recover the underlying category of $\C$
from $U\C$. If we have a
\laxstar\ functor $f:\C\to\D$, we can recover the functor part of
$f$ from $Uf$ by considering only its effect on $1$-morphisms, and
we recover the lax structure map $\lambda:fa\oplus fb\to f(a\oplus
b)$ by looking at the image under $Uf$ of the $2$-morphism
$\id_{a\oplus b}\in U\C(a,b;a\oplus b)$.  Now we can apply the
same argument in each variable separately for an $n$-morphism of
$\P$, say $f:(\C_1,\dots,\C_n)\to\D$, to recover $f$ from $Uf$.
Consequently, $U$ is faithful.

To see that $U$ is full, suppose first that $g:U\C\to U\D$ is a
based multifunctor, that is, a 1-morphism in $\Multstar$.  Then
certainly $g$ determines a functor $f:\C\to\D$ by restriction to
$1$-morphisms, and $f(0)=0$ because $g$ is based. We define a lax
structure map for $f$, as above, by looking at the image in $U\D$
of the canonical $2$-morphism $\id_{a\oplus b}\in U\C(a,b;a\oplus
b)$. It is now an interesting exercise to show that these data
determine a
\laxstar\ functor $f:\C\to\D$ for which $g=Uf$.

In the general case where $g:(U\C_1,\dots,U\C_k)\to U\D$ is a
$k$-morphism in $\Multstar$, we again recover a functor
$f:\C_1\times\dots\times\C_k\to\D$ by restriction to 1-morphisms,
and the structure maps for a $k$-linear map of permutative
categories are again determined by the images of the identity maps
on $c_i\oplus c_i'$ considered as a 2-morphism in
$U\C_i(c_i,c_i';c_i\oplus c_i')$. The basepoint conditions on $f$
follow from those on $g$, and the coherence pentagon relating
structure maps is the case $m=n=2$ of the diagram in definition
\ref{bilin}.  Seeing that $f$ is \laxstar\ in each variable is the
same interesting exercise as before, so we obtain a $k$-linear map
$f$ in $\P$ such that $g=Uf$.  Therefore $U$ is full.
\end{proof}

At this point, the question of whether the underlying based
multicategory functor is an embedding of multicategories is a
purely set-theoretic one.  Our treatment above does not treat the
precise point sets involved in enough detail to answer it: we have
really only constructed the functor up to natural isomorphism.
With a more precise construction, the functor is an embedding. (We
thank the referee for pointing this out.)  Specifically, the
permutative product on the objects can be recovered from the
identification of $U\C_{2}(a,b;c)$ as a subset of the morphisms in
$\C$: the source is $a\oplus b$. This then can be extended to
recover the product on the maps using the element of
$U\C_{2}(a,b;a\oplus b)$ identified with the identity in
$\C(a\oplus b,a\oplus b)$.  On the other hand, as the following
example indicates, such an observation can be somewhat misleading.

\begin{example}\label{MacLaneCategory}
Let $A$ be a based set with six elements, including the basepoint.
Then we can give $A$ two non-isomorphic group structures, say
$G_1$ and $G_2$, both having $A$ as their underlying based set,
but with $G_1\cong\Sigma_3$ and $G_2\cong\Z/6$.  Let $EA$ be the
indiscrete category with objects the elements of $A$, so all
morphism sets have exactly one element.  Then the group structures
$G_1$ and $G_2$ give $EA$ two distinct structures as a permutative
category, since the groups of objects are not isomorphic. However,
the underlying based multicategories \emph{are} isomorphic: all
$k$-morphism sets again have exactly one element for all $k$.
(This of course generalizes to any based set $A$ with non-isomorphic
monoid structures.)  Consequently, the strict isomorphism class of
a permutative category cannot be recovered from the isomorphism
class of its underlying based multicategory.
\end{example}

\section{Colimits and Tensor Products}\label{complete}

In this section we show that $\Multstar$ has all the good formal
properties required for homotopy theory.  To be specific, we will
show that it is complete, cocomplete, and supports a symmetric
monoidal closed structure whose underlying multicategory is the
one specified in Section \ref{mult}.  Our strategy is to prove
these things first in the unbased context of $\Mult$ (without the
star), and then bootstrap from there to $\Multstar$.  This
material appears to be well-known to the experts. In particular, the
symmetric monoidal structure is a special case of the tensor product
of theories of Boardman and Vogt \cite[Chapter~2]{BV}; the closed structure
is summarized in \cite[\S5.1]{MW}.  Since we have not found a full
treatment of these properties in their entirety, we produce one here.

Completeness in $\Mult$ (and $\Multstar$) is easy: as in $\Cat$,
limits are computed on objects and morphism sets within the
category of sets.

Since the construction of the monoidal product in $\Mult$ (to
which we will refer as the \emph{tensor product}) is as a colimit,
our first major goal is the cocompleteness of $\Mult$.  We start
by establishing the analogous property for the category $\Strict$
of permutative categories and strict maps.  We will then identify
$\Mult$ as a category of coalgebras over a comonad in $\Strict$,
showing it to be cocomplete.

\begin{lemma}\label{StrictCocomplete}
The category $\Strict$ of permutative categories and strict maps
is cocomplete.
\end{lemma}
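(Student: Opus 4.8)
The plan is to prove cocompleteness of $\Strict$ by the standard route of establishing the existence of all coproducts and all coequalizers separately; since these together generate arbitrary colimits in any category, this suffices. First I would construct coproducts. Given a family $\{\C_\alpha\}$ of permutative categories, the coproduct should be built as the ``free permutative category'' generated by the disjoint union of the underlying data: its objects are (equivalence classes of) formal sums $x_1\oplus\cdots\oplus x_n$ with each $x_i$ a non-zero object of some $\C_\alpha$, modulo the strict associativity and unit relations, with morphisms assembled compatibly from the $\C_\alpha$ and the permutation isomorphisms $\gamma$ freely adjoined between summands from distinct factors. One checks that the canonical strict maps $\C_\alpha\to\coprod_\alpha\C_\alpha$ exhibit the universal property: a family of strict maps $\C_\alpha\to\D$ extends uniquely to a strict map on formal sums precisely because strictness forces the value on $x_1\oplus\cdots\oplus x_n$ to be the $\oplus$ of the values on the $x_i$.

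For coequalizers, given a parallel pair of strict maps $f,g\colon\C\to\D$, I would form the coequalizer by quotienting $\D$ by the smallest equivalence relation (a ``permutative congruence'') on objects and on morphisms that identifies $f(c)$ with $g(c)$ for all objects and morphisms $c$ of $\C$, and that is compatible with $\oplus$, with composition, and with $\gamma$. The key observation is that because $f$ and $g$ are strict, the relation generated is automatically respected by the monoidal structure, so the quotient $\D/{\sim}$ inherits a well-defined functor $\oplus$, a zero object, and a symmetry $\gamma$ satisfying the permutative axioms; the quotient map $\D\to\D/{\sim}$ is then strict and is universal among strict maps out of $\D$ coequalizing $f$ and $g$. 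Assembling coproducts and coequalizers then yields all small colimits, completing the proof.

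The main obstacle I anticipate is verifying that the constructed coproduct and coequalizer genuinely satisfy the permutative category axioms of Definition \ref{PermCat}, in particular strict associativity and the three coherence diagrams for $\gamma$, rather than merely yielding a symmetric monoidal category up to coherent isomorphism. For coproducts this requires a careful normal-form argument on the formal sums so that associativity holds on the nose; for coequalizers it requires checking that the generated congruence does not accidentally collapse the zero object into something it shouldn't, and that the induced $\gamma$ remains a natural isomorphism satisfying the coherence hexagon after passage to the quotient. These are the delicate points where strictness of the maps, as opposed to general lax or \laxstar\ maps, is essential, and it is precisely why the statement is phrased for the category $\Strict$: the rigidity of strict maps is what makes these colimits computable on the nose. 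Once these verifications are in hand, the universal properties are routine, so I expect the bulk of the work to lie in the bookkeeping of the normal forms and the congruence rather than in any conceptual difficulty.
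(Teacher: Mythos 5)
Your overall strategy (build all colimits from coproducts and coequalizers) is legitimate and genuinely different from the paper's. The paper instead observes that the forgetful functor $\Strict\to\Cat$ is monadic, with monad $\PP\C=\coprod_{k\ge0}E\Sigma_k\times_{\Sigma_k}\C^k$, checks that this monad preserves reflexive coequalizers, and invokes the general result (\cite{EKMM}, II.7.4) that the algebras over such a monad on a cocomplete base category form a cocomplete category. Your coproduct half is consistent with the paper's own remark that the object monoid of a coproduct of permutative categories is the free product of the object monoids, and with sufficient bookkeeping on normal forms it can be completed.

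The genuine gap is in your coequalizer step. The coequalizer of $f,g\colon\C\to\D$ in $\Cat$ --- and hence in $\Strict$ --- is \emph{not} obtained by quotienting the objects and morphisms of $\D$ by an equivalence relation. Once the objects $f(c)$ and $g(c)$ are identified, pairs of morphisms of $\D$ that were not composable become composable in the coequalizer, so the coequalizer must contain formal composites of strings of morphisms of $\D$ whose consecutive targets and sources agree only after the identification; these composites are not represented by any single morphism of $\D$, so the hom-sets of the coequalizer are quotients of a set of such formal strings (a ``generalized congruence''), not quotients of the morphism set of $\D$. Consequently the statement that ``the quotient $\D/{\sim}$ inherits a well-defined functor $\oplus$'' does not yet make sense, because the category you need is larger than any quotient of $\D$; and the verification that the new formal composites interact coherently with $\oplus$ and $\gamma$ is exactly the hard part. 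The construction can be repaired along these lines, but this complication is precisely what the paper's monadicity argument is designed to bypass: one only ever needs reflexive coequalizers, which the explicit free-permutative-category monad preserves, and the remaining colimits come for free from cocompleteness of $\Cat$.
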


\begin{proof}
The forgetful functor from $\Strict$ to $\Cat$ is the right
adjoint in a monadic adjunction; the monad is given explicitly by
$$\PP\C:=\coprod_{k\ge0}E\Sigma_k\times_{\Sigma_k}\C^k,
$$
where $E\Sigma_k$ is the translation category on the symmetric
group $\Sigma_k$. (This was apparently first pointed out by Dunn
in
\cite{D}.) A slight variation on the argument given in the proof of
\cite{EKMM}, II.7.2 shows that this monad preserves reflexive
coequalizers, and therefore by
\cite{EKMM}, II.7.4, $\Strict$ is cocomplete.
\end{proof}

It is interesting to note that the forgetful functor from
$\Strict$ to monoids that forgets the morphisms and remembers only
the objects and their monoidal structure has a right adjoint, namely
the functor that takes a monoid $M$ to the indiscrete category $EM$
that has objects $M$, single element morphism sets, and permutative
product the monoidal product.  The forgetful functor therefore
preserves coproducts:
The monoid of objects of a coproduct of permutative categories is
the coproduct of the monoids of objects of the individual categories (in
the category of monoids). Note that these are \emph{not} commutative
monoids in general, because the commutativity isomorphism has been
forgotten.

The underlying multicategory construction gives us a forgetful
functor $G:\Strict\to\Mult$, and our next goal is to show that it
has a left adjoint.

\begin{theorem}\label{Mladj} The forgetful functor
$G:\Strict\to\Mult$ has a left adjoint.
\end{theorem}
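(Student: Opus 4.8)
The plan is to construct the left adjoint $F\colon\Mult\to\Strict$ explicitly as a ``free permutative category on a multicategory'' functor, natural in $M$, and then exhibit the adjunction bijection $\Strict(FM,\C)\cong\Mult(M,G\C)$ directly. Recall that $G\C$ is the multicategory with $G\C_k(c_1,\dots,c_k;d)=\C(c_1\oplus\cdots\oplus c_k,d)$, so a multifunctor $M\to G\C$ sends each object $a$ of $M$ to an object of $\C$ and each $k$-morphism $\phi\colon(a_1,\dots,a_k)\to b$ to a map in $\C$ out of the sum of the images of the $a_i$; this is exactly the data that $F$ must freely generate.

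For a small multicategory $M$, I would take the objects of $FM$ to be the finite words $\u a=(a_1,\dots,a_n)$, $n\ge0$, in $\Ob(M)$, with $\oplus$ given by concatenation and $0$ by the empty word, so that associativity and unitality are strict on objects. A morphism $\u a\to\u b$ (with $\u a$ of length $n$ and $\u b$ of length $m$) I would define to be a function $f\colon\{1,\dots,n\}\to\{1,\dots,m\}$ together with, for each $j$, a morphism $\phi_j\in M(\br{a_i}_{i\in f^{-1}(j)};b_j)$ whose source is the fiber $f^{-1}(j)$ listed in increasing order. Composition with $\u b\xrightarrow{(g,\psi_\bullet)}\u c$ has underlying function $g\circ f$, and on the $\ell$-th block is obtained by forming $\multprod(\psi_\ell;\br{\phi_j}_{j\in g^{-1}(\ell)})$ in $M$ and then applying the $\Sigma$-action by the permutation that reshuffles the concatenated fibers back into the increasing order inherited from $\{1,\dots,n\}$; identities are $(\id,\br{1_{a_j}})$. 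The symmetry $\gamma$ is the block transposition carrying identity $1$-morphisms, and $\oplus$ on morphisms is juxtaposition. One then checks the permutative axioms of Definition~\ref{PermCat}; the only real content is associativity of composition, which is forced by the associativity and $\Sigma$-equivariance axioms for $M$, the reshuffling permutations satisfying the cocycle identity that matches operad associativity.

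The unit $\eta_M\colon M\to G(FM)$ sends $a$ to the length-one word $(a)$ and sends $\phi\colon(a_1,\dots,a_k)\to b$ to the morphism $(a_1,\dots,a_k)\to(b)$ given by the constant function and $\phi$ itself; this is a multifunctor because the $\Sigma$-action on $G(FM)_k=FM(a_1\oplus\cdots\oplus a_k,b)$ is precomposition with $\gamma$, which matches the action in $M$. Conversely, a multifunctor $h\colon M\to G\C$ should extend to the unique strict map $\bar h\colon FM\to\C$ with $\bar h(\u a)=h(a_1)\oplus\cdots\oplus h(a_n)$, sending a morphism $(f,\phi_\bullet)$ to the composite in $\C$ that first permutes the source to group the summands by their target block and then applies $\bigoplus_j h(\phi_j)$, using that $h(\phi_j)\in\C(\bigoplus_{i\in f^{-1}(j)}h(a_i),h(b_j))$. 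The main obstacle is verifying that $\bar h$ is well defined and functorial and that $h\mapsto\bar h$ is inverse to $g\mapsto Gg\circ\eta_M$: this is where all the symmetric-group bookkeeping lives, since one must check that the permutations used to define composition in $FM$ are carried by $\bar h$ to the corresponding symmetry isomorphisms of $\C$, so that functoriality of $\bar h$ becomes precisely the equivariance and associativity of the multifunctor $h$.

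Alternatively, one can obtain the adjoint softly: $G$ creates limits, since these are computed on underlying object and morphism sets exactly as in $\Cat$, and both $\Strict$ (by Lemma~\ref{StrictCocomplete} and monadicity over $\Cat$ via the monad $\PP$) and $\Mult$ are locally presentable with $G$ accessible, so the adjoint functor theorem for locally presentable categories yields $F$. I would nonetheless prefer the explicit construction, since the comonad $FG$ on $\Strict$ whose coalgebras are to recover $\Mult$ can then be read off directly from it.
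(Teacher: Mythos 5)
Your construction is exactly the one the paper uses: objects of $FM$ are finite words in $\Ob(M)$ with concatenation as the sum, and a morphism is an index function together with a multimorphism of $M$ over each fiber, composition being given by $\multprod$ plus reshuffling permutations. The proposal is correct and in fact supplies more detail on the unit and the adjunction bijection than the paper, which leaves those verifications to the reader.
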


\begin{proof}
We construct the left adjoint $F$ as follows. Let $\M$ be a
multicategory. Then $F\M$ has as its objects the free monoid on
the objects of $\M$,
$$\coprod_{k\ge0}{\obj\M}^k.
$$
Given objects $\u a=(a_1,\ldots,a_m)$ and $\u b=(b_1,\ldots,b_n)$,
a morphism in $F\M$ from $\u a$ to $\u b$ consists of a function
$\phi:\{1,\ldots,m\}\to\{1,\ldots,n\}$, together with an $n$-tuple
of morphisms $(f_1,\ldots,f_n)$, where $f_j$ is a morphism of $\M$
from $\br{a_i}_{\phi(i)=j}$ to $b_j$.  Composition of morphisms is
given by composition of the set functions on the indices, together
with induced maps using the composition on the multicategory $\M$
and permutations necessary to preserve coherence.  The permutative
structure is given by concatenation of lists. The reader can now
safely verify that this does give a left adjoint.
\end{proof}

This construction is similar to the ``categories of operators''
used by May and Thomason in \cite{MT}, but differs in that it
involves the unbased sets $\{1,\ldots,n\}$ rather than the based
sets $\{0,\ldots,n\}$. The basepoint in a category of operators
encodes projection operators $a\oplus b\to a$ or $a\oplus b\to b$,
which do not exist in a general permutative category.

Our major use of this adjunction is the following:

\begin{theorem}\label{comonadic} The adjunction
$$F:\Mult\rightleftarrows\Strict:G
$$
is comonadic, i.e., the canonical comparison functor from $\Mult$
to the category of coalgebras over the comonad $FG$ on $\Strict$
is an equivalence of categories.
\end{theorem}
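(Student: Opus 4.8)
The plan is to show directly that the comparison functor is an equivalence by exhibiting it as one half of an adjoint equivalence; Beck's (dual) comonadicity theorem offers an alternative packaging, in which one checks that $F$ reflects isomorphisms and preserves equalizers of $F$-split pairs, but the explicit description of $F$ from Theorem \ref{Mladj} makes the equivalence transparent, so I prefer to construct the inverse. Write $\eta\colon\id_{\Mult}\to GF$ and $\e\colon FG\to\id_{\Strict}$ for the unit and counit, so the comonad $FG$ on $\Strict$ has comultiplication $F\eta G$, and the comparison functor $K\colon\Mult\to\Strict_{FG}$ sends a multicategory $\M$ to the permutative category $F\M$ with coaction $F\eta_{\M}\colon F\M\to FGF\M$. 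I will use the explicit formulas: on objects $\e_{\C}$ sends a list $(c_1,\dots,c_k)$ to $c_1\oplus\dots\oplus c_k$, and a morphism $(\phi,\br{g_j})$ to the evident composite built from the $g_j$ using $\oplus$ and the symmetry $\gamma$; the unit $\eta_{\M}$ sends an object $a$ to the length-one list $(a)$.

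Call an object $c$ of an $FG$-coalgebra $(\C,\rho)$ \emph{primitive} if $\rho(c)=(c)$, the length-one list. The first step is a structural lemma: the coalgebra axioms force $\rho$ to be a coherent, exhaustive decomposition into primitives. The counit law $\e_{\C}\circ\rho=\id_{\C}$ shows that if $\rho(c)=(c_1,\dots,c_k)$ then $c_1\oplus\dots\oplus c_k=c$; moreover $\rho$ is strict, so $\rho(a\oplus b)$ is the concatenation of $\rho(a)$ and $\rho(b)$ and $\rho(0)$ is empty. Coassociativity $FG\rho\circ\rho=(F\eta_{G\C})\circ\rho$, evaluated on an object $c$ with $\rho(c)=(c_1,\dots,c_k)$, equates $(\rho(c_1),\dots,\rho(c_k))$ with $((c_1),\dots,(c_k))$, whence $\rho(c_i)=(c_i)$: every entry of every $\rho(c)$ is primitive. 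Thus $\rho(c)$ is a decomposition of $c$ into primitives, and it is unique, since any expression $c=a_1\oplus\dots\oplus a_m$ with the $a_i$ primitive gives $\rho(c)=\rho(a_1)\cdots\rho(a_m)=(a_1,\dots,a_m)$ by strictness. I then define $L(\C,\rho)$ to be the full sub-multicategory of $G\C$ on the primitive objects; a coalgebra map preserves primitives (apply the compatibility square $\rho'h=FGh\circ\rho$ at a primitive) and is strict, so $L$ is functorial.

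Next I would check the two round trips. For $LK$: the primitive objects of $F\M$ are exactly the length-one lists, i.e.\ the objects of $\M$, and the $k$-fold multihom in $GF\M$ from $((a_1),\dots,(a_k))$ to $(b)$ is $F\M((a_1,\dots,a_k),(b))=\M_k(a_1,\dots,a_k;b)$, so $L(K\M)\cong\M$ naturally. For $KL$: the inclusion $\M_{\C}:=L(\C,\rho)\hookrightarrow G\C$ induces a strict map $p\colon F\M_{\C}\to\C$, namely $\e_{\C}$ composed with $F$ of this inclusion, sending a list of primitives to its sum. By uniqueness and exhaustiveness of the decomposition, $p$ is a bijection on objects with inverse $\rho$. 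On morphisms $p$ is the restriction of $\e_{\C}$ to the hom-sets $FG\C(\br{a_i},\br{b_j})$ between lists of primitives, and there $\e_{\C}$ and $\rho$ are mutually inverse: $\e_{\C}\circ\rho=\id$ is the counit law, while $\rho\circ\e_{\C}=\id$ follows because $\rho$ is strict and preserves $\gamma$, so it carries the composite built by $\e_{\C}$ back to the original data, using that for a single primitive target $(b)$ the map $\e_{\C}$ is the defining identification $FG\C(\br{a_i};(b))=\C(\bigoplus_i a_i,b)$. Hence $p$ is an isomorphism of permutative categories, and a short diagram chase identifies it as an isomorphism of coalgebras $K(\M_{\C})\cong(\C,\rho)$.

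The step I expect to be the main obstacle is the second one: extracting from the coalgebra axioms the precise statement that $\rho$ realizes each object as a \emph{unique} list of primitives and that $\e_{\C}$ and $\rho$ invert one another on the hom-sets between such lists. This is exactly the content that makes $\Mult$ equivalent to the coalgebras, and it rests on the coassociativity computation together with strict preservation of the symmetry $\gamma$; the permuted case, when the indexing function $\phi$ gathering the fibers is not order preserving, is where the bookkeeping is most delicate. Granting this, $K$ and $L$ are inverse equivalences, so $F$ is comonadic. As an immediate dividend, since the forgetful functor $\Strict_{FG}\to\Strict$ creates colimits and $\Strict$ is cocomplete by Lemma \ref{StrictCocomplete}, $\Mult$ is cocomplete.
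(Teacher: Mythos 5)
Your argument is correct in outline, but it takes a genuinely different route from the paper's. The paper does \emph{not} construct a quasi-inverse to the comparison functor: it invokes the dual form of Beck's theorem, verifying that $F$ has a right adjoint, reflects isomorphisms (by identifying $M$ and $N$ with the preimages in $GFM$ and $GFN$ of the sub-multicategory of $GF(*)$ generated by $\{1\}$), and preserves equalizers (which in $\Strict$, hence in $\Cat$ and $\Set$, are computed objectwise). You instead extract an explicit structure theorem for $FG$-coalgebras: the coassociativity computation forcing every entry of $\rho(c)$ to be primitive, the uniqueness of the primitive decomposition via strictness of $\rho$, and the mutual inversion of $\rho$ and $\e_{\C}$ on hom-sets between lists of primitives. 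These steps are all sound; in particular the crux you flag --- that $\rho\circ\e_{\C}=\id$ on morphisms of $F\M_{\C}$ --- does go through, since the counit law applied to each component $g_j$ with primitive source entries and primitive target forces $\rho(g_j)$ to be the singleton pair containing $g_j$, and strictness plus preservation of $\gamma$ then handle the block sum and the fiber-grouping permutation. What your approach buys is a concrete characterization of the coalgebras (permutative categories whose object monoid is free on the primitives, with morphisms decomposing accordingly), which makes the equivalence with $\Mult$ visibly an isomorphism of data; what the paper's approach buys is brevity and the avoidance of precisely the permutation bookkeeping you identify as delicate, at the cost of citing Beck. Both yield the same dividend, namely Corollary \ref{cocomplete}. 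If you write your version up in full, the two points deserving the most care are the composition rule in $F$ when the index function $\phi$ is not fiber-ordered (to see that $\psi$ composed with the grouping permutation really is $\phi$ and that no nontrivial permutation acts on the $g_j$), and the naturality in $(\C,\rho)$ of your isomorphism $p$, which is needed to conclude that $K$ and $L$ are inverse equivalences rather than merely objectwise isomorphic.
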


\begin{proof}  We use the dual form of Beck's Theorem; see \cite{BW},
Theorem 3.14. We must show that $F$ has a right adjoint, reflects
isomorphisms, that $\Mult$ has equalizers of reflexive $F$-split
equalizer pairs, and that $F$ preserves them. We already have the
right adjoint, namely the forgetful functor $G$, and we already
know that $\Mult$ is complete, so it has all the equalizers
required. We will show that $F$ reflects isomorphisms, and that it
preserves all equalizers, not just the ones required for the
hypotheses of Beck's Theorem.

To see that $F$ reflects isomorphisms, we note that for any map
$\alpha:M\to N$ of multicategories, the diagram
$$\xymatrix@C=10pt{
FM\ar[rr]^{F\alpha}\ar[dr]
&&FN\ar[dl]
\\&F(*)}
$$
commutes, where $*$ is the terminal multicategory having one
object and one $k$-morphism for every $k$, and the unlabelled
arrows are induced by $F$ from the maps to this terminal object.
The permutative category $F(*)$ has objects $[k]=\{1,\ldots,k\}$
for $k\ge0$, ordinary functions as morphisms, and sum operation
$[k_1]\oplus[k_2]:=[k_1+k_2]$, using the obvious extension to sums
of functions. Further, the image of the unit $\eta:M\to GFM$ is
precisely the preimage of the full sub-multicategory of $GF(*)$
generated by the single object $\{1\}$. Now suppose $F\alpha$ is
an isomorphism; we wish to show that $\alpha$ must itself be an
isomorphism. Then $GF\alpha$ is an isomorphism, so we get the
commutative diagram
$$\xymatrix@C=10pt{
M\ar[rr]^-{\alpha}\ar[d]^-{\eta}
&&N\ar[d]^-{\eta}
\\GFM\ar[rr]^-{GF\alpha}_-{\cong}\ar[dr]
&&GFN\ar[dl]
\\&GF(*).}
$$
But since $M$ and $N$ are precisely the preimages of the full
sub-multicategory generated by $\{1\}$, and $GF\alpha$ is an
isomorphism, it follows that $\alpha$ must be an isomorphism as
well.

To see that $F$ preserves equalizers, we observe that equalizers
in $\Strict$ are created in $\Cat$, since $\Strict$ is monadic
over $\Cat$, and further, that equalizers in $\Cat$ are simply
computed in $\Set$ on objects and morphisms separately.  It is now
straightforward to use the definition of $F$ to see that
equalizers are preserved.  (Note, however, that $F$ does not
preserve products.)
\end{proof}

We get as an immediate corollary:

\begin{corollary}\label{cocomplete} The category $\Mult$ is
cocomplete.
\end{corollary}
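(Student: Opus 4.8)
The plan is to deduce the cocompleteness of $\Mult$ purely formally, by combining the comonadic description from Theorem \ref{comonadic} with the cocompleteness of $\Strict$ from Lemma \ref{StrictCocomplete}. By Theorem \ref{comonadic} the comparison functor exhibits $\Mult$ as equivalent to the category of coalgebras over the comonad $FG$ on $\Strict$. Since cocompleteness is invariant under equivalence of categories, it suffices to show that this category of coalgebras is cocomplete.

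The key categorical input I would invoke is the dual of the familiar fact that a monadic forgetful functor creates limits: for any comonad on a category $\mathcal{C}$, the forgetful functor from its coalgebras down to $\mathcal{C}$ creates all colimits that exist in $\mathcal{C}$. I would either cite this as the formal dual of the creation-of-limits theorem for algebras (the dual side of the machinery of \cite{BW}) or give the short direct check. Given a diagram of $FG$-coalgebras $(C_j,\theta_j)$ whose underlying diagram in $\Strict$ has a colimit $L$ with cocone maps $\iota_j$, the composites $(FG)(\iota_j)\circ\theta_j\colon C_j\to (FG)L$ form a cocone, the required compatibility being exactly the statement that each transition map is a coalgebra morphism; the universal property of $L$ then produces a unique structure map $\theta\colon L\to (FG)L$. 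One verifies that $(L,\theta)$ satisfies the counit and coassociativity axioms and that it is the colimiting coalgebra, all of which follow by uniqueness since the cocone maps $\iota_j$ are jointly epimorphic.

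With this fact in hand the assembly is immediate: $\Strict$ is cocomplete by Lemma \ref{StrictCocomplete}, the forgetful functor from $FG$-coalgebras creates (hence preserves) colimits, so the coalgebra category is cocomplete, and therefore so is the equivalent category $\Mult$. A pleasant byproduct is that colimits in $\Mult$ are then computed by passing to $\Strict$, where by Lemma \ref{StrictCocomplete} they are ultimately built from colimits in $\Cat$.

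I do not expect any genuine obstacle here, which is why the statement is phrased as an immediate corollary; the one thing to get right is the \emph{variance}. Comonadicity over $\Strict$ is precisely what yields cocompleteness for free, dual to the way monadicity would have handed us completeness for free, and it is essential that coalgebras over a comonad inherit colimits (not limits) from the base. The only substantive point is the brief verification that the base colimit $L$ acquires its canonical coalgebra structure from the structures on the pieces, and this is exactly the dualized creation-of-colimits argument sketched above.
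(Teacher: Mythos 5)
Your proposal is correct and follows exactly the paper's argument: identify $\Mult$ with the category of coalgebras over the comonad $FG$ on the cocomplete category $\Strict$ (Theorem \ref{comonadic} and Lemma \ref{StrictCocomplete}), then invoke the fact that the forgetful functor from coalgebras creates colimits. The paper simply cites \cite{CatWork}, VI.2, exercise 2 for this last fact where you sketch its proof, but the route is the same.
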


\begin{proof} It is equivalent to the category of coalgebras over a
comonad on the cocomplete category $\Strict$.  See \cite{CatWork},
VI.2, exercise 2 for the dual statement.
\end{proof}

We turn next to the construction of the tensor product in $\Mult$.
The composite of a bilinear map $(M,N)\to P$ with an ordinary map
of multicategories $P\to Q$ is again a bilinear map, and the
tensor product of multicategories that we will construct is a
universal bilinear target relative to ordinary maps. This tensor
product is equivalent to the tensor product of theories
constructed in
\cite{BV}, but we give an explicit construction in terms of
multicategories, rather than theories.

\begin{theorem}\label{tensorprod} For any two multicategories
$M$ and $N$, there is a tensor product multicategory $M\otimes N$
and a universal bilinear map $(M,N)\to M\otimes N$.  This tensor
product makes $\Mult$ into a symmetric monoidal category.
\end{theorem}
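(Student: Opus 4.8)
The plan is to construct $M\otimes N$ as a colimit, exploiting the cocompleteness of $\Mult$ established in Corollary~\ref{cocomplete}, and then to verify universality and the symmetric monoidal axioms. The guiding principle is that a bilinear map $(M,N)\to P$ should be the same thing as an ordinary multifunctor $M\otimes N\to P$, so $M\otimes N$ is forced to be the multicategory that ``freely'' carries the universal bilinear map. Concretely, I would first build a free multicategory on the objects $\Ob M\times\Ob N$ together with generating morphisms: for each $m$-morphism $\phi$ of $M$ and each object $b$ of $N$ a generator $f(\phi,b)$, and symmetrically for each $n$-morphism $\psi$ of $N$ and object $a$ of $M$ a generator $f(a,\psi)$, with sources and targets dictated exactly by clauses (2) and (3) of Definition~\ref{bilin}. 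Then $M\otimes N$ is the quotient of this free multicategory by the relations that encode clauses (1)--(3) of the commutativity conditions: that $f(\blank,b)$ and $f(a,\blank)$ are multifunctorial (so they respect composition, units, and $\Sigma$-actions in each variable), and that the interchange square of Definition~\ref{bilin}(3) commutes. Because $\Mult$ is cocomplete, this ``generators and relations'' construction exists as a coequalizer of maps between free multicategories.

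With $M\otimes N$ in hand, the universal bilinear map $u\colon(M,N)\to M\otimes N$ sends $(a,b)$ to $(a,b)$ and sends the generating data to the corresponding generators; the relations are precisely what is needed for $u$ to satisfy Definition~\ref{bilin}. The universal property then reads: composition with $u$ gives a bijection $\Mult(M\otimes N,P)\cong\Bilin(M,N;P)$, natural in $P$. One direction sends a multifunctor $g$ to $g\circ u$; for the other, given a bilinear map $f$, the data of $f$ determine a multifunctor out of the free multicategory (send each generator to the named morphism of $P$), and the conditions (1)--(3) defining a bilinear map are exactly the relations we imposed, so this descends to a well-defined multifunctor $M\otimes N\to P$. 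That these two assignments are mutually inverse is a direct unwinding of the construction.

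Next I would assemble the symmetric monoidal structure. The unit is the terminal multicategory $*$, since a bilinear map $(*,N)\to P$ is, by inspection of Definition~\ref{bilin}, the same as a multifunctor $N\to P$ together with a compatible choice of commutative monoid structure coming from the $*$ variable; chasing the universal properties gives a natural isomorphism $*\otimes N\cong N$, and symmetrically on the other side. The symmetry isomorphism $M\otimes N\cong N\otimes M$ comes from the evident symmetry in Definition~\ref{bilin} under interchanging the two variables, which induces a bilinear map in the opposite order and hence a comparison multifunctor that is its own inverse up to the construction. For associativity I would use the key fact, recorded in the Corollary after Definition~\ref{110}, that $\Bilin(M,N;P)$ represents both $\Hom(M,\Hom(N,P))$ and $\Hom(N,\Hom(M,P))$; this adjunction-style identification lets one exhibit natural bijections $\Mult\bigl((M\otimes N)\otimes L,\,P\bigr)\cong\Mult\bigl(M\otimes(N\otimes L),\,P\bigr)$ by identifying both sides with trilinear maps $(M,N,L)\to P$ in the sense of the evident three-variable analogue of Definition~\ref{klin}, and then invoking Yoneda.

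The main obstacle will be the coherence bookkeeping rather than the existence of the comparison maps. Once the universal property is in place, every structural isomorphism is produced by Yoneda from a correspondence of multilinear maps, so uniqueness is automatic; the genuine work is checking that these isomorphisms satisfy the pentagon and hexagon coherence axioms. My plan is to reduce this to coherence for multilinear maps directly: the associativity and symmetry isomorphisms, being induced from reindexing of the variables in a $k$-linear map, compose according to the symmetric group actions built into Definition~\ref{MultDef}, so the pentagon and hexagon follow from the corresponding identities among permutations (the same ``reverse the priority of the indices'' shuffles appearing throughout Section~\ref{mult}). I expect the most delicate point to be confirming that the relations imposed in the coequalizer are neither too few (so that $u$ really is bilinear and universal) nor too many (so that $M\otimes N$ does not collapse), which amounts to exhibiting enough bilinear maps into test multicategories to separate morphisms; the representability identifications of the Corollary after Definition~\ref{110} provide exactly such test targets and so will carry this verification.
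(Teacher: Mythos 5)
Your construction of $M\otimes N$ by generators and relations, resting on the cocompleteness of $\Mult$, is essentially the route the paper takes (the paper organizes it as a pushout of the coproducts $\coprod_{a\in\obj{M}}N$ and $\coprod_{b\in\obj{N}}M$ followed by a further pushout along free multicategories on $\MM$-graphs that imposes the interchange square), and your Yoneda argument for associativity via multilinear maps matches the paper's argument via the iterated internal Hom adjunction. However, there is one genuine error: the unit for $\otimes$ is \emph{not} the terminal multicategory $*$. Your own analysis shows why: a bilinear map $(*,N)\to P$ consists of a multifunctor $N\to P$ \emph{together with} a compatible commutative monoid structure on each image object (coming from the unique $k$-morphism of $*$ for every $k$), and this extra structure does not go away when you pass to the universal example. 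Concretely, $\Mult(*\otimes N,P)\cong\Bilin(*,N;P)$ is genuinely smaller than $\Mult(N,P)$ in general; for instance, taking $N$ to be the one-object multicategory $U$ with only its identity morphism, a bilinear map $(*,U)\to P$ is exactly a commutative monoid in $P$, so $*\otimes U\cong *$, not $U$. The correct unit is $U$ itself: a bilinear map $(U,N)\to P$ has no freedom in the $U$-variable beyond identities, so it is precisely a multifunctor $N\to P$, giving $U\otimes N\cong N$. This is what the paper asserts (``the unit is easily seen to be the multicategory with one object and only the identity morphism''), and it matters downstream, since the unit of the smash product on $\Multstar$ is built from this $U$ as $U\amalg *$.

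A smaller point worth making explicit: to realize ``the free multicategory on a set of generating morphisms with prescribed sources and targets'' you need the left adjoint $L\colon\Mgrph\to\Mult$ to the forgetful functor (Theorem \ref{Uladj}), and the relations (which equate \emph{composites} of generators) are most cleanly imposed as pushouts along maps $LX\to LY$ of free multicategories on $\MM$-graphs rather than as a single coequalizer of free multicategories on bare sets of morphisms; this is exactly how the paper arranges Construction \ref{tensorconstruction}. With the unit corrected and this bookkeeping in place, the rest of your outline goes through as in the paper.
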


In order to prove Theorem \ref{tensorprod}, we must first discuss
some other categories and adjunctions related to $\Mult$, using a
modified version of Leinster's discussion of multicategories as
generalized monoids in \cite{L}. Let $\MM$ be the free monoid
monad in $\Set$,
$$\MM A:=\coprod_{k\ge0}A^k.
$$

\begin{definition}\label{Mgraph}  An $\MM${\emph{-graph}} $X$ consists
of two sets, $X_0$ (the \emph{objects}) and $X_1$ (the
\emph{arrows}), together with two functions, the {\sl source}
$s:X_1\to\MM X_0$ and the {\sl target} $t:X_1\to X_0$.  We usually
display an $\MM$-graph as a span
$$\xymatrix{\MM X_0&X_1\ar[l]_-{s}\ar[r]^-{t}&X_0.}
$$
A {\sl map} of $\MM$-graphs $f:X\to Y$ consists of functions
$f_0:X_0\to Y_0$ and $f_1:X_1\to Y_1$ for which the obvious
diagram
$$\xymatrix{
\MM X_0\ar[d]_-{\MM f_0}
&X_1\ar[l]_-{s}\ar[r]^-{t}\ar[d]^-{f_1}
&X_0\ar[d]^-{f_0}
\\ \MM Y_0
&Y_1\ar[l]^-{s}\ar[r]_{t}
&Y_0}
$$
commutes.
\end{definition}

We get a category $\Mgrph$ of $\MM$-graphs, and there is a
forgetful functor $U:\Mult\to\Mgrph$ that remembers the objects,
morphisms, sources, and targets, but forgets about the identities,
permutations, and composition. We use the following theorem in
our construction of the tensor product.

\begin{theorem}\label{Uladj} The forgetful functor
$U:\Mult\to\Mgrph$ has a left adjoint $L:\Mgrph\to\Mult$.
\end{theorem}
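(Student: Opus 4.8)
The plan is to construct the left adjoint $L$ explicitly by freely generating a multicategory from an $\MM$-graph, then verify the universal property directly. Given an $\MM$-graph $X$, the objects of $LX$ should simply be $X_0$, since the forgetful functor $U$ does not alter the object set; any unit of adjunction must therefore be the identity on objects, and freeness forces us to add no new objects. The real work is in describing the $k$-morphisms of $LX$, which must be the \emph{formal composites} of the arrows in $X_1$, together with the formally adjoined identity morphisms and the formal action of the symmetric groups. Concretely, I would describe a $k$-morphism of $LX$ from $(a_1,\dots,a_k)$ to $b$ as a finite rooted tree (an ``operadic composite'') whose internal nodes are labelled by arrows of $X$, whose leaves are labelled by the source objects $a_1,\dots,a_k$ in some order recorded by a permutation, and whose root outputs $b$; two such trees are identified when they differ by the associativity, unit, and equivariance relations imposed by the multicategory axioms. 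This is the standard free-multicategory (free operad-with-many-objects) construction.

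The key steps, in order, are as follows. First I would give the precise inductive definition of the morphism sets of $LX$: a $0$-level morphism is an adjoined identity $1_a$, and an $(n{+}1)$-level morphism is obtained by grafting an arrow $g\in X_1$ with target $c$ and source $(c_1,\dots,c_n)\in\MM X_0$ onto an $n$-tuple of already-constructed morphisms whose targets are $c_1,\dots,c_n$, all modulo the equivalence relation generated by the operad identities of Definition \ref{MultDef}. Second, I would define the unit, the $\Sigma_k$-actions (by permuting the leaf labelling), and the composition $\multprod$ (by grafting of trees), and check that these satisfy the multicategory axioms, so that $LX$ is genuinely an object of $\Mult$. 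Third, I would define the unit of the adjunction $\eta:X\to ULX$ as the map of $\MM$-graphs sending each arrow $g\in X_1$ to the corresponding depth-one tree in $(LX)_1$. Finally, I would verify the universal property: given any multicategory $M$ and any $\MM$-graph map $h:X\to UM$, there is a unique multifunctor $\tilde h:LX\to M$ with $U\tilde h\circ\eta=h$, defined by sending a formal tree to its actual evaluation in $M$ using $h$ on the labels and the composition of $M$. Well-definedness of $\tilde h$ on equivalence classes is exactly the statement that the imposed relations hold in $M$, which they do because $M$ is a multicategory; uniqueness follows because every morphism of $LX$ is generated under composition, units, and permutations from the image of $\eta$.

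The main obstacle is bookkeeping the symmetric group actions and the leaf orderings so that composition is well defined and strictly associative on equivalence classes, rather than merely up to coherent isomorphism. Because we work \emph{with} permutations, a careful choice of convention is needed so that grafting trees, permuting leaves, and the equivariance relations of Definition \ref{MultDef} interact correctly; the verification that $\multprod$ descends to equivalence classes and satisfies the equivariance and associativity identities is where the genuine content lies, and it is the analogue of checking that the free operad on a symmetric sequence is an operad. Once this is set up cleanly, the adjunction isomorphism $\Mult(LX,M)\cong\Mgrph(X,UM)$ is a routine consequence of the freeness: a multifunctor out of a free multicategory is determined by, and may be prescribed arbitrarily by, its effect on the generating arrows.
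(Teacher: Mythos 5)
Your proposal is correct in outline, but it takes a different route from the paper, and the difference is worth noting. The paper factors the forgetful functor as $\Mult\to\Nmult\to\Mgrph$, where $\Nmult$ is the category of non-symmetric multicategories, and constructs a left adjoint to each piece separately. The free non-symmetric multicategory $L'X$ on an $\MM$-graph $X$ is built from trees generated recursively by three clauses (identity trees, elements of $X_1$, and graftings $(f;R_1,\ldots,R_k)$ in which not all $R_j$ are identities); the last restriction gives every morphism a canonical form, so \emph{no quotient by associativity or unit relations is ever taken} -- composition is instead defined directly by induction on tree height. The symmetrization $L''P(a_1,\ldots,a_k;b)=\coprod_{\sigma\in\Sigma_k}P(a_{\sigma^{-1}(1)},\ldots,a_{\sigma^{-1}(k)};b)$ then freely adjoins the $\Sigma_k$-actions, with composition forced by equivariance, again with no quotient. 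By contrast, you build everything in one step as trees-with-leaf-permutations modulo the equivalence relation generated by the multicategory identities. That is a legitimate construction, and your universal-property argument (evaluation of formal trees, well-definedness because the relations hold in any multicategory, uniqueness from generation) goes through; but the "main obstacle" you identify -- checking that $\multprod$ descends to equivalence classes and that the permutation bookkeeping is consistent -- is precisely what the paper's two-stage factorization with canonical forms is designed to avoid. The paper's approach also buys an explicit description of the morphisms of $LX$, which is used later (Proposition \ref{genclosed} and the generation arguments for the tensor product); your quotient description would make those subsequent arguments harder to run. So: correct, but you should either supply the canonical-form argument or accept a genuinely more laborious verification at the step you flagged.
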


\begin{proof}  We proceed in two steps, using as an intermediate
stop the category $\Nmult$ of non-symmetric multicategories,
applying some observations of \cite{L} (where non-sym\-met\-ric
multicategories are called simply multicategories). Clearly the
forgetful functor $U$ factors through $\Nmult$, and we claim that
both terms in the composite have left adjoints.  The desired left
adjoint is then the composite of these two left adjoints.

First, consider the forgetful functor $U':\Nmult\to\Mgrph$.  As
observed in
\cite[2.3]{L}, $U'$ has a left
adjoint $L'$, constructed as follows.  Given an $\MM$-graph
$X$, the free non-symmetric multicategory $L'X$ is a multicategory
where the $k$-morphisms are the trees with $k$ leaves generated by
the arrows in $X$, with all nodes (including the root and the
leaves) labelled by objects of $X$. In detail, the objects of
$L'X$ are $X_0$, and the morphisms of $L'X$, called trees, are
generated recursively by the following requirements:
\begin{enumerate}
\item
For each object $a\in X_0$, there is an identity tree $1_a$.
\item
Each element $f\in X_1$ is a tree with source $sf$ and target
$tf$.
\item
Given an $f\in X_1$ with target $c$ and source $(b_1,\ldots,b_k)$,
and trees $R_1,\ldots,R_k$, not all identity trees, with the
target of $R_j$ being $b_j$ and the source being
$(a_{j1},\ldots,a_{jn_j})$, there is a tree $(f;R_1,\ldots,R_k)$
with source $(a_{11},\ldots,a_{kn_k})$ and target $c$.
\end{enumerate}

We must define a composition on this collection of trees, and we
do so by induction on the height of a tree, where we define the
height of an identity tree to be 0, the height of an element of
$X_1$ to be 1, and the height of a tree $(f;R_1,\ldots,R_k)$ to be
$1+\max_j\{\hbox{height}(R_j)\}$.  Given trees $A;B_1,\ldots,B_n$
which are composable, we define $\Gamma(A;B_1,\ldots,B_n)$ by
induction on the height of $A$.  If $A$ has height 0, then it is
$1_a$ for some object $a$, $n=1$, and $B=B_1$ has output $a$.  We
define $\Gamma(1_a;B)=B$, as required for a multicategory.
Similarly, if $B_1,\ldots,B_n$ are all identity trees, then we
require $\Gamma(A,B_1,\ldots,B_n)=A$.

If the height of $A$ is 1, then $A=f$ for some $f\in X_1$, and we
define $\Gamma(A;B_1,\ldots,B_n)=(f;B_1,\ldots,B_n)$.  For taller
trees, $A$ must be of the form $(f;A_1,\ldots,A_k)$, with $n$
partitioning into $k$ segments so the $j$'th segment of $B$'s
feeds into $A_j$.  Then we define
$$\Gamma(A;B_1,\ldots,B_n)
=(f;\Gamma(A_1;B_1,\ldots,B_{n_1}),
\ldots,\Gamma(A_k;B_{n-n_k+1},\ldots,B_n)),
$$
where the compositions on the right side are already defined,
since the heights of the $A_j$'s are all less than the height of
$A$.  It is now a routine exercise to show that the requirements
for a composition are satisfied, and that this construction
provides a left adjoint to the forgetful functor
$U':\Nmult\to\Mgrph$.

Next, we consider the forgetful functor $U'':\Mult\to\Nmult$, and
construct a left adjoint $L''$.  Given a non-symmetric
multicategory $P$, we construct $L''P$ as follows.  First, the
objects of $L''P$ are the same as the objects of $P$.  Next, given
a source string $(a_1,\ldots,a_k)$ and a target $b$, we define
$$L''P(a_1,\ldots,a_k;b)
=\coprod_{\sigma\in\Sigma_k}P(a_{\sigma^{-1}(1)},\ldots,
a_{\sigma^{-1}(k)};b),
$$
so a $k$-morphism in $L''P(a_1,\ldots,a_k;b)$ consists of an
ordered pair $(f,\sigma)$ where $\sigma\in\Sigma_k$ and $f\in
P(a_{\sigma^{-1}(1)},\ldots, a_{\sigma^{-1}(k)};b)$.  We let
$\Sigma_k$ act on the right via the natural group action on the
symmetric group coordinate.  The composition, which is forced by
the equivariance requirements for a multicategory, is given by
\begin{gather}
\Gamma((f,\sigma);(g_1,\tau_1),\ldots,(g_k,\tau_k))\notag\\
=(\Gamma(f;g_{\sigma^{-1}(1)},\ldots,g_{\sigma^{-1}(k)}),
\sigma\br{n_1,\ldots,n_k}\circ(\tau_1\oplus\cdots\oplus\tau_k)).
\notag
\end{gather}
Again, it is an exercise to show that this construction satisfies
the requirements for a multicategory, and gives a left adjoint to
the forgetful functor.  The composite $L=L''\circ L'$ therefore
gives a left adjoint for $U=U'\circ U''$.
\end{proof}

We will also need the following proposition in our construction of
the tensor product.

\begin{proposition}\label{Obadj} The set-of-objects functor
$\obj{\blank}:\Mult\to\Set$ has both a left and a right adjoint.
\end{proposition}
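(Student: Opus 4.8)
The plan is to exhibit both adjoints explicitly, since each is given by a transparent construction: the left adjoint sends a set to the ``discrete'' multicategory on it, and the right adjoint sends a set to the ``indiscrete'' one.

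First I would construct the left adjoint $L\colon\Set\to\Mult$. Given a set $S$, let $LS$ be the multicategory whose objects are the elements of $S$, whose only morphisms are the units $1_a\in (LS)_1(a;a)$, and which has empty morphism sets in every other case (for all $k$, including $k=0$). The $\Sigma_k$-actions, units, and composition are then forced, and the operad axioms hold trivially. A multifunctor $LS\to M$ is completely determined by its effect on objects, a function $S\to\obj M$, because the only morphisms to be preserved are units, whose images are forced to be units. This gives a bijection $\Mult(LS,M)\cong\Set(S,\obj M)$, natural in both variables, exhibiting $L$ as left adjoint to $\obj{\blank}$.

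Next I would construct the right adjoint $R\colon\Set\to\Mult$. Given a set $S$, let $RS$ be the indiscrete multicategory whose objects are the elements of $S$ and which has exactly one $k$-morphism in $(RS)_k(a_1,\dots,a_k;b)$ for every source tuple and target; when $S$ is a one-point set this recovers the terminal multicategory $*$. The $\Sigma_k$-actions are necessarily trivial, and units and composition are the unique available elements, so all axioms hold automatically. A multifunctor $M\to RS$ is determined by its effect on objects, a function $\obj M\to S$, since every morphism of $M$ must be sent to the unique morphism of $RS$ with the matching source and target; the multifunctor axioms then hold automatically because all structure in $RS$ is unique. This yields a bijection $\Mult(M,RS)\cong\Set(\obj M,S)$, natural in both variables, exhibiting $R$ as right adjoint to $\obj{\blank}$.

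The constructions present no real obstacle; the only point to check is that the assignment on morphisms forced by a given function on objects genuinely respects the multicategory structure. For $L$ this is immediate, since the only nonempty morphism sets consist of units. For $R$ it holds because every morphism set of $RS$ is a singleton, so every required diagram commutes on the nose. Naturality of both bijections in $S$ and in the multicategory variable is routine, as each bijection is simply ``restriction to objects''.
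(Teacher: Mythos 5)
Your proposal is correct and matches the paper's proof exactly: the left adjoint is the discrete multicategory with only identity morphisms, and the right adjoint is the indiscrete one with a unique $k$-morphism for every source and target. You have simply spelled out the adjunction bijections that the paper dismisses as trivial verifications.
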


\begin{proof} The left adjoint assigns to a set $A$ the
multicategory $FA$ with the set $A$ as its objects and with only
identity morphisms, while the right adjoint $RA$ also has $A$ as
its objects, but with exactly one morphism for each possible
source and target.  The necessary verifications are trivial.
\end{proof}

\begin{corollary} The objects of a limit or colimit of
multicategories are computed in $\Set$.
\end{corollary}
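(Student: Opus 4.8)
The plan is to deduce this immediately from Proposition \ref{Obadj}, invoking only the standard preservation properties of adjoint functors. The key facts I would use are that a right adjoint preserves all limits and a left adjoint preserves all colimits. The whole point of having isolated Proposition \ref{Obadj} is that $\obj{\blank}$ enjoys \emph{both} roles, so it preserves both kinds of universal construction at once.

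Concretely, the first step is to recall from Proposition \ref{Obadj} that $\obj{\blank}\colon\Mult\to\Set$ has a left adjoint $F$ and a right adjoint $R$. Since $F\dashv\obj{\blank}$, the functor $\obj{\blank}$ is a right adjoint and hence preserves limits; since $\obj{\blank}\dashv R$, it is a left adjoint and hence preserves colimits. The second step is simply to read off the conclusion: for any small diagram of multicategories, applying $\obj{\blank}$ to its limit (respectively colimit) formed in $\Mult$ produces the corresponding limit (respectively colimit) of the object-sets, formed in $\Set$. This is exactly the statement that the objects of a limit or colimit of multicategories are computed in $\Set$.

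There is no real obstacle here; the only matter to confirm is that the relevant limits and colimits actually exist in $\Mult$ so that the preservation statement has content, and this we already know, since $\Mult$ is complete (limits are computed on objects and morphism sets in $\Set$, as noted at the start of this section) and cocomplete by Corollary \ref{cocomplete}. Thus the corollary follows with no further computation.
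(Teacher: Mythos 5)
Your proposal is correct and is exactly the argument the paper intends: the corollary is stated as an immediate consequence of Proposition \ref{Obadj}, with $\obj{\blank}$ preserving limits because it has a left adjoint and colimits because it has a right adjoint. Your additional remark on the existence of the relevant limits and colimits (completeness as noted at the start of the section, cocompleteness from Corollary \ref{cocomplete}) is a sensible bit of bookkeeping that the paper leaves implicit.
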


We are now in a position to construct the tensor product and prove
that it is a universal bilinear target.

\begin{construction}\label{tensorconstruction} Let $M$ and $N$ be multicategories.
Then we can construct the coproducts of multicategories
$$\coprod_{a\in\obj{M}}N\quad\hbox{and}\quad\coprod_{b\in\obj{N}}M.
$$
Each of these coproducts has $\obj{M}\times\obj{N}$ as its set of
objects.  Further, the first of them is universal for any map that
sends the objects $\obj{M}\times\obj{N}$ to the objects of a
multicategory $P$ and which is a multifunctor in $N$; similarly,
the second is universal for maps that are multifunctors in $M$.
Any bilinear map $(M,N)\to P$ therefore induces multifunctors from
each of these coproducts to $P$ that restrict to the same map on
the common set of objects, and therefore induces a map from the
pushout
$$\xymatrix{
F(\obj{M}\times\obj{N})\ar[r]\ar[d]
&\displaystyle\coprod_{b\in\obj{N}}M\ar[d]
\\ \displaystyle\coprod_{a\in\obj{M}}N\ar[r]
&M\#N,}
$$
where the upper left corner is the free multicategory on the set
of objects $\obj{M}\times\obj{N}$.  This pushout $M\#N$ is
universal with respect to maps that are multifunctors in each
variable separately, and what remains is to make the bilinearity
diagrams of Definition \ref{bilin} commute universally.

Given one morphism from each multicategory, say an $m$-morphism
$\phi:(a_1,\ldots,a_m)\to a'$ in $M$ and an $n$-morphism
$\psi:(b_1,\ldots,b_n)\to b'$ in $N$, we define $\MM$-graphs
$X(\phi,\psi)$ and $Y(\phi,\psi)$ as follows. The objects of both
will be
$$(\{a_1,\ldots,a_m\}\times\{b_1,\ldots,b_n\})\cup\{(a',b')\},
$$
and in $X(\phi,\psi)$ there are to be precisely two arrows, both
with source $\br{\br{(a_i,b_j)}_{i=1}^m}_{j=1}^n$ and target
$(a',b')$, while in $Y(\phi,\psi)$ there is exactly one arrow with
the same source and target as the arrows in $X(\phi,\psi)$.  There
is an obvious map of $\MM$-graphs $X(\phi,\psi)\to Y(\phi,\psi)$
collapsing the two arrows of $X$ to the one arrow of $Y$.  There
is also a map of $\MM$-graphs from $X(\phi,\psi)$ to $U(M\#N)$
sending each arrow to one way around the diagram in Definition
\ref{bilin}, but without the $f$'s.  We take the adjoints of all
of these maps and form the following pushout:
$$\xymatrix{
\displaystyle\coprod_{(\phi,\psi)}LX(\phi,\psi)\ar@<1ex>[r]\ar[d]
&\mathop{M\#N}\limits_{\phantom\phi}\ar[d]
\\ \displaystyle\coprod_{(\phi,\psi)}LY(\phi,\psi)\ar@<1ex>[r]
&\mathop{M\otimes N}\limits_{\phantom\phi}.}
$$
It now follows that we have universally forced the diagrams in the
definition of a bilinear map to commute, so $M\otimes N$ is a
universal bilinear target.  As we can see by checking at each
step, the objects of $M\otimes N$ are still $\Ob(M)\times\Ob(N)$.
\end{construction}

Our next goal is the proof of the following theorem; cf. \cite[II.2.18]{BV}.

\begin{theorem}\label{tensormonoidal} The tensor product of Construction
\ref{tensorconstruction} and the internal Hom object of Definition
\ref{Homs} make $\Mult$ into a symmetric monoidal closed category.
\end{theorem}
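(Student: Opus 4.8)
The plan is to deduce every part of the symmetric monoidal closed structure from the single universal property established in Construction \ref{tensorconstruction}, namely the natural bijection
$$\Mult(M\otimes N,P)\cong\Bilin(M,N;P),$$
together with the multicategory isomorphism $\Bilin(M,N;P)\cong\Hom(M,\Hom(N,P))$ of the Corollary following Definition \ref{110}. Passing to objects and combining these gives at once
$$\Mult(M\otimes N,P)\cong\Mult(M,\Hom(N,P)),$$
natural in all three variables, so once $\otimes$ is shown to be a symmetric monoidal product the pair $(\otimes,\Hom)$ is automatically the closed structure. Functoriality of $\otimes$ in each variable, and the naturality of all of the isomorphisms below, follow formally from the universal property by Yoneda, so I will concentrate only on producing the structural isomorphisms and verifying their coherence.

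The symmetry and unit isomorphisms are the easy part. The defining data in Definition \ref{bilin} is visibly symmetric in the two source variables: interchanging $M$ and $N$ transposes the single coherence square into itself, so $\Bilin(M,N;P)\cong\Bilin(N,M;P)$ naturally, and Yoneda yields $M\otimes N\cong N\otimes M$. For the unit I take $\mathbf 1$ to be the multicategory with a single object $\iota$ and only its identity $1$-morphism, i.e.\ the image of a one-point set under the left adjoint of Proposition \ref{Obadj}. Unwinding Definition \ref{bilin} for a bilinear map $(M,\mathbf 1)\to P$, the only morphism available in $\mathbf 1$ is an identity, each $f(a,\blank)$ is forced to be trivial, the coherence square degenerates, and the surviving data is exactly a multifunctor $f(\blank,\iota)\colon M\to P$. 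Hence $\Bilin(M,\mathbf 1;P)\cong\Mult(M,P)$, and Yoneda gives $M\otimes\mathbf 1\cong M$, with $\mathbf 1\otimes M\cong M$ following by symmetry.

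For associativity I first promote the universal property to higher arity, showing that iterated tensor products represent the $k$-linear maps of Definition \ref{klin},
$$\Mult(M_1\otimes\cdots\otimes M_k,N)\cong\{k\text{-linear maps }(M_1,\dots,M_k)\to N\},$$
independently of how the factors are bracketed; the general statement follows by induction from the case $k=3$, which is the isomorphism
$$\Bilin(M\otimes N,P;Q)\cong\{3\text{-linear maps }(M,N,P)\to Q\}\cong\Bilin(M,N\otimes P;Q).$$
Granting this, Yoneda produces the associator $(M\otimes N)\otimes P\cong M\otimes(N\otimes P)$ as the unique map compatible with the universal $3$-linear maps into each side.

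I expect the $k=3$ isomorphism to be the main obstacle. The task is to trace through the pushout construction of $M\otimes N$ in Construction \ref{tensorconstruction} and verify that a multifunctor out of $M\otimes N$ which is in addition bilinear against $P$ unpacks exactly into the data of Definition \ref{klin}: multifunctoriality in each of $M$, $N$, $P$ separately — the first two together being multifunctoriality out of $M\otimes N$, the third being the $P$-variable of the bilinear map — and bilinearity in each of the three pairs. The delicate check is that the single coherence square of Definition \ref{bilin} for the pair $(M\otimes N,P)$, restricted to the generating morphisms of $M\otimes N$ (the images of morphisms of $M$ and of $N$ together with the forced bilinearity relations), reproduces precisely the two pairwise squares of Definition \ref{110} for $(M,P)$ and for $(N,P)$, the $(M,N)$ coherence having already been imposed by passage through $M\otimes N$. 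Once this identification is in hand, the coherence axioms — pentagon, triangle, and hexagon, together with the symmetry being an involution — all follow formally: in each axiom both composites are maps out of a fixed iterated tensor product that are compatible with the relevant universal multilinear map, and so coincide by the Yoneda uniqueness built into the universal property.
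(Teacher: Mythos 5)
Your proposal is correct in outline, but it handles the one genuinely nontrivial point --- associativity --- by a different route than the paper. The paper's strategy is to \emph{enrich} the Hom-tensor adjunction to an isomorphism of multicategories $\Hom(M\otimes N,P)\cong\Hom(M,\Hom(N,P))$ (via Lemma \ref{natural} and Proposition \ref{tensorGen}) and then obtain the associator from the four-step chain $\Mult((M\otimes N)\otimes P,Q)\cong\Mult(M,\Hom(N,\Hom(P,Q)))\cong\Mult(M\otimes(N\otimes P),Q)$ plus Yoneda; the enrichment is essential there, since the middle step applies $\Mult(M,\blank)$ to an isomorphism of internal Homs and a mere bijection on objects would not suffice. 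You instead compare both bracketings directly to a common universal property, the trilinear maps of Definition \ref{klin}, which is the classical ``multilinear maps'' argument and avoids ever stating the enriched adjunction. The two routes rest on the same technical core, which you correctly identify as the delicate check: that $M\otimes N$ is generated by the morphisms $\phi\otimes b$ and $a\otimes\psi$, and that the bilinearity square for the pair $(M\otimes N,P)$ need only be verified on those generators. To make that reduction honest you need precisely the paper's Lemma \ref{natural} (a family $\xi$ natural with respect to a generating set is natural with respect to all morphisms, since naturality is preserved under composition and the symmetric group actions); you gesture at this but do not isolate it, and it is the one lemma you would have to prove rather than cite before your $k=3$ isomorphism, and hence the pentagon, goes through. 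What your approach buys is a slightly more self-contained associativity argument and a uniform statement for all arities $k$; what the paper's buys is the enriched adjunction itself, which is reused later (e.g.\ in bootstrapping the closed structure to $\Multstar$ in Lemma \ref{SmashMonoidal}).
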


It is clear from the construction that the tensor product is
symmetric, and it is easy to verify that it is adjoint to the
internal Hom; further, the unit is easily seen to be the
multicategory with one object and only the identity morphism on
that object.  This leaves the associativity of the tensor product
to verify, and our strategy is to enrich the Hom-tensor adjunction
and use the Yoneda Lemma.

\begin{definition} Let $S$ be a set of morphisms in a
multicategory $M$.  The multicategory $\br{S}$ {\emph{generated
by}} $S$ is the smallest sub-multicategory of $M$ that contains
all the morphisms in $S$. If $\br{S}=M$, we say that $S$ is a
{\emph{generating set of morphisms}} for $M$, or that $M$ is
{\emph{generated by}} $S$.
\end{definition}

The following proposition is clear from the construction of $L$ in
the proof of Theorem~\ref{Uladj}.

\begin{proposition}\label{genclosed}
The morphisms of $\br{S}$ consist
of the identities on the objects appearing as targets or
components of sources in $S$, together with those constructed
recursively from $S$ by means of permutations and compositions.
\end{proposition}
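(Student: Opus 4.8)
The plan is to prove Proposition \ref{genclosed} by unwinding the recursive description of the left adjoint $L:\Mgrph\to\Mult$ from the proof of Theorem \ref{Uladj}, and to identify $\br{S}$ as the image of a canonical map out of $L$ applied to the span determined by $S$. Concretely, a set $S$ of morphisms in $M$ determines an $\MM$-subgraph $X_S$ of $U(M)$: its objects are the objects of $M$ that occur as the target or as a source component of some morphism in $S$, its arrows are the elements of $S$, and $s,t$ are inherited from $M$. The adjunction $L\dashv U$ then produces a canonical multifunctor $\e\colon LX_S\to M$ whose image is a sub-multicategory of $M$ containing $S$.

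First I would argue that the image of $\e$ is exactly $\br{S}$. Since $\br{S}$ is by definition the smallest sub-multicategory containing $S$, and the image of $\e$ is a sub-multicategory containing $S$, we get $\br{S}\subseteq\mathop{\textnormal{im}}\e$. For the reverse inclusion, I would observe that every morphism of $LX_S$ is, by the recursive generation of trees in the proof of Theorem \ref{Uladj}, built from identities and the arrows of $X_S$ using the permutations (introduced by $L''$) and compositions (introduced by $L'$). Because $\br{S}$ is closed under identities on its objects, under the $\Sigma_k$-actions, and under $\Gamma$, an induction on the height of a tree shows that $\e$ carries every morphism of $LX_S$ into $\br{S}$; hence $\mathop{\textnormal{im}}\e\subseteq\br{S}$. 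Combining the inclusions gives $\mathop{\textnormal{im}}\e=\br{S}$.

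Having identified $\br{S}$ with $\mathop{\textnormal{im}}\e$, the explicit description in the statement follows by reading off what the trees of $LX_S$ map to. The objects of $LX_S$ are exactly the targets and source components of elements of $S$, which is why the identities listed in the proposition are precisely the identities on those objects. The non-identity morphisms of $LX_S$ are the trees of positive height, which (after applying $L''$, hence allowing a symmetric-group coordinate) are exactly the formal permutations and composites built recursively from the arrows of $X_S$, i.e.\ from $S$; applying $\e$ sends these to the corresponding honest permutations and composites of elements of $S$ in $M$. This yields the claimed recursive description.

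The main obstacle, modest as it is, will be to verify carefully that the image of a multifunctor is genuinely a sub-multicategory and that the recursive clauses generating trees in $L'$ and $L''$ match, under $\e$, the closure operations (identities, $\Sigma_k$-action, and $\Gamma$) used in the definition of $\br{S}$; this amounts to confirming that no morphisms of $\br{S}$ are missed and none extraneous are produced. Since the tree description in Theorem \ref{Uladj} already exhibits every morphism of $LX_S$ as one of identity, generator, permutation-twist, or composite, this matching is a direct induction on height and requires no further structure, so the proposition is indeed ``clear from the construction of $L$,'' as asserted.
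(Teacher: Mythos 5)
Your argument is correct and is exactly the elaboration the paper has in mind: the paper offers no written proof beyond the remark that the proposition is clear from the construction of $L$ in Theorem \ref{Uladj}, and your identification of $\br{S}$ with the image of the adjoint multifunctor $\e\colon LX_S\to M$ is the natural way to make that remark precise. The one point worth making explicit in your ``modest obstacle'' is that the image of $\e$ is genuinely a sub-multicategory because $\e$ is injective on objects (it is the inclusion of $(X_S)_0$ into $\Ob{M}$), so composable strings of morphisms in the image lift to composable strings in $LX_S$; without injectivity on objects the image of a multifunctor need not be closed under $\Gamma$.
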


\begin{lemma}\label{natural} Let $A$ and $B$ be multicategories, let
$f_1,\dots,f_k,g\in\Mult(A,B)$, and suppose we have a function
$\xi$ assigning to each object $a$ of $A$ a $k$-morphism
$\xi_a:(f_1a,\dots,f_ka)\to ga$ such that the diagram of
Definition
\ref{Homs} commutes for all $\phi$ in a generating set for $A$.
Then the diagram commutes for all morphisms $\phi$ of $A$, so
$\xi$ is a $k$-natural transformation, i.e., a $k$-morphism in
$\Hom(A,B)$.
\end{lemma}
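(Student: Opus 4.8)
The plan is to induct on the recursive description of the morphisms of $A = \br{S}$ supplied by Proposition \ref{genclosed}: every morphism of $A$ is either an identity on a relevant object or is built from the morphisms of $S$ by iterated applications of permutations and compositions. Since we are given that the diagram of Definition \ref{Homs} commutes for each $\phi$ in the generating set $S$, it suffices to show that the class of morphisms for which the diagram commutes contains all identities and is closed under the $\Sigma$-action and under the composition $\multprod$.

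First I would dispose of the identity morphisms. For $\phi = 1_a$ we have $m = 1$ and $b = a_1 = a$, and since each $f_j$ and $g$ is a multifunctor, $f_j(1_a) = 1_{f_j a}$ and $g(1_a) = 1_{ga}$; both vertical composites in the diagram then reduce to $\xi_a$, the shuffle isomorphism is the identity, and the square commutes. Next I would treat closure under the $\Sigma_m$-action. If the diagram commutes for an $m$-morphism $\phi$ and $\sigma \in \Sigma_m$, then, because $f_j$ and $g$ are multifunctors and hence equivariant, $f_j(\sigma^* \phi) = \sigma^*(f_j \phi)$ and $g(\sigma^* \phi) = \sigma^*(g \phi)$. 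The diagram for $\sigma^* \phi$ is thus obtained from that for $\phi$ by relabelling the source blocks according to $\sigma$; the only point requiring care is that the shuffle isomorphism for $\sigma^* \phi$ agrees with the composite of $\sigma$, acting on the $m$ outer blocks, with the shuffle for $\phi$, which is exactly the coherence built into the symmetric-group axioms for an operad. Commutativity is therefore inherited.

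The substantive step, which I expect to be the main obstacle, is closure under composition. Suppose $\phi = \multprod(\psi; \chi_1, \dots, \chi_n)$, where $\psi \colon (c_1, \dots, c_n) \to b$ and $\chi_l \colon (a_{l1}, \dots, a_{lm_l}) \to c_l$, and assume the diagram commutes for $\psi$ and for each $\chi_l$. Since $g$ is a multifunctor, $g\phi = \multprod(g\psi; g\chi_1, \dots, g\chi_n)$, and likewise $f_j \phi = \multprod(f_j\psi; f_j\chi_1, \dots, f_j\chi_n)$ for every $j$. I would build the naturality diagram for $\phi$ by stacking the naturality diagrams for the $\chi_l$, run in parallel with the values $\xi_{c_l}$ serving as the common interface, on top of the naturality diagram for $\psi$. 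The commutativity of each constituent square is the inductive hypothesis, and the preservation of composition by the multifunctors $f_j$ and $g$ glues the squares together.

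The one genuinely delicate ingredient is to verify that the single shuffle isomorphism occurring in the diagram for $\phi$ factors as the appropriate composite of the shuffles for $\psi$ and for the $\chi_l$: reorganizing $\sum_l m_l$ blocks of $k$ entries into $k$ blocks of $\sum_l m_l$ entries must be expressed through the intermediate reorganizations at each $c_l$ together with the outer reorganization governed by $\psi$. This is a bookkeeping verification about block permutations, and it follows from the associativity and equivariance axioms for a multicategory (the operad identities cited in Definition \ref{MultDef}). Once this factorization is in hand, the outer rectangle commutes, which completes the induction and hence establishes that $\xi$ is a $k$-natural transformation.
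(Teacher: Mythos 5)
Your proposal is correct and follows essentially the same route as the paper: declare $\xi$ ``natural'' with respect to the morphisms for which the diagram commutes, then show this class is closed under composition and the symmetric group action and invoke Proposition \ref{genclosed}. The ``delicate ingredient'' you flag---that the total shuffle for $\multprod(\psi;\chi_1,\dots,\chi_n)$ factors through the outer shuffle for $\psi$ followed by the inner shuffles for the $\chi_l$---is exactly the left-hand column of the paper's explicit composition diagram, so your deferral of it to operadic bookkeeping matches what the paper actually verifies.
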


\begin{proof}  We will say that $\xi$ is natural with respect to
those morphisms for which the diagram commutes. We show that the
diagram commutes for compositions and permutations of elements
with respect to which $\xi$ is natural, so by Proposition
\ref{genclosed}, commutes for all morphisms of $A$.

First, suppose we are given composable elements with respect to
which $\xi$ is natural, say $\phi_1,\dots,\phi_n$ with
$\phi_i:(a_{i1},\dots,a_{im_i})\to b_i$ and
$\psi:(b_1,\dots,b_n)\to c$.  Then the following diagram shows
that $\xi$ is natural with respect to
$\Gamma(\psi;\phi_1,\dots,\phi_n)$:
$$\xymatrix@C+25pt{
\br{\br{\br{f_j(a_{is})}_{s=1}^{m_i}}_{i=1}^n}_{j=1}^k
\ar[r]^-{\br{\br{f_j\phi_i}_{i=1}^n}_{j=1}^k} \ar[d]_-{\cong}
&\br{\br{f_jb_i}_{i=1}^n}_{j=1}^k \ar[r]^-{\br{f_j\psi}_{j=1}^k}
\ar[d]_-{\cong}
&\br{f_jc}_{j=1}^k \ar[ddd]^-{\xi_c}
\\ \br{\br{\br{f_j(a_{is})}_{s=1}^{m_i}}_{j=1}^k}_{i=1}^n
\ar[r]^-{\br{\br{f_j\phi_i}_{j=1}^k}_{i=1}^n} \ar[d]_-{\cong}
&\br{\br{f_jb_i}_{j=1}^k}_{i=1}^n
\ar[dd]^-{\br{\xi_{b_i}}_{i=1}^n}
\\ \br{\br{\br{f_j(a_{is})}_{j=1}^k}_{s=1}^{m_i}}_{i=1}^n
\ar[d]^-{\br{\br{\xi_{a_{is}}}_{s=1}^{m_i}}_{i=1}^n}
\\ \br{\br{g(a_{is})}_{s=1}^{m_i}}_{i=1}^n
\ar[r]^-{\br{g\phi_i}_{i=1}^n}
&\br{gb_i}_{i=1}^n \ar[r]^-{g\psi}
&gc.}
$$

Now suppose given also $\sigma\in\Sigma_n$.  Then the following
diagram shows that $\xi$ is natural with respect to
$\psi\cdot\sigma$:
$$\xymatrix@C+25pt{
\br{\br{f_jb_{\sigma(i)}}_{i=1}^n}_{j=1}^k \ar[r]^-{\cong}
\ar[d]^-{\br{\sigma}}
&\br{\br{f_jb_{\sigma(i)}}_{j=1}^k}_{i=1}^n
\ar[r]^-{\br{\xi_{b_{\sigma(i)}}}_{i=1}^n} \ar[d]^-{\br{\sigma}}
&\br{gb_{\sigma(i)}}_{i=1}^n \ar[d]^-{\br{\sigma}}
\\ \br{\br{f_jb_i}_{i=1}^n}_{j=1}^k \ar[r]^-{\cong}
\ar[d]^-{\br{f_j\psi}_{j=1}^k}
&\br{\br{f_jb_i}_{j=1}^k}_{i=1}^n \ar[r]^-{\br{\xi_{b_i}}_{i=1}^n}
&\br{gb_i}_{i=1}^n \ar[d]^-{g\psi}
\\ \br{f_jc}_{j=1}^k \ar[rr]^-{\xi_c}
&&gc.}
$$

Since we were given that $\xi$ was natural with respect to
morphisms in a generating set for $A$, it now follows that it is
natural with respect to all morphisms in $A$, and therefore $\xi$
is a $k$-natural transformation.
\end{proof}

\begin{notation} Let $M$ and $N$ be multicategories, $\phi$ a
morphism of $M$ and $b$ an object of $N$.  Then we write
$\phi\otimes b$ for the morphism of $M\otimes N$ induced from
$\phi$ and $b$ by the universal bilinear map $(M,N)\to M\otimes
N$.  Similarly, we write $a\otimes\psi$ given an object $a$ of $M$
and a morphism $\psi$ of $N$.
\end{notation}

We obtain the following proposition from the universal property of the
tensor product.

\begin{proposition} The morphisms of $M\otimes N$ of the form
$a\otimes\psi$ and $\phi\otimes b$ generate the entire
multicategory $M\otimes N$.
\end{proposition}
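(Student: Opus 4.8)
The plan is to deduce this directly from the universal property of the tensor product (Theorem \ref{tensorprod}), without re-examining the colimit construction of \ref{tensorconstruction}. Writing $S=\{\,a\otimes\psi\,\}\cup\{\,\phi\otimes b\,\}$ for the set of advertised generators and $\br{S}$ for the sub-multicategory of $M\otimes N$ that it generates, I would prove $\br{S}=M\otimes N$ by exhibiting a retraction of the inclusion $\iota\colon\br{S}\hookrightarrow M\otimes N$.

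First I would check that $\br{S}$ contains every object. By Proposition \ref{genclosed} the objects of $\br{S}$ are precisely those occurring as a source or target of some morphism in $S$. Since each one-variable multifunctor $u(a,\blank)\colon N\to M\otimes N$ arising from the universal bilinear map $u\colon(M,N)\to M\otimes N$ preserves units, we have $a\otimes 1_b=1_{a\otimes b}\in S$, so the object $a\otimes b$ lies in $\br{S}$; as $\Ob(M\otimes N)=\Ob M\times\Ob N$, all objects are present.

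The key step is to observe that $u$ itself factors through $\br{S}$. The complete data of the bilinear map $u$ are the object assignment $(a,b)\mapsto a\otimes b$ together with the morphisms $u(\phi,b)=\phi\otimes b$ and $u(a,\psi)=a\otimes\psi$, and all of these now lie in $\br{S}$. Because $\br{S}$ is a sub-multicategory, the multifunctoriality in each variable and the bilinearity square of Definition \ref{bilin}, which hold in $M\otimes N$, hold verbatim inside $\br{S}$; hence $u$ corestricts to a bilinear map $u'\colon(M,N)\to\br{S}$ with $\iota\circ u'=u$. Applying the universal property to $u'$ then yields a unique multifunctor $r\colon M\otimes N\to\br{S}$ with $r\circ u=u'$, so that $\iota\circ r$ is an endomorphism of $M\otimes N$ satisfying $(\iota\circ r)\circ u=\iota\circ u'=u$; the uniqueness clause of the universal property (applied to the bilinear map $u$ itself, which $\id_{M\otimes N}$ also factors) forces $\iota\circ r=\id_{M\otimes N}$. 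Thus $\iota$ is surjective and $\br{S}=M\otimes N$.

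I expect the one point needing care to be the factorization: confirming that corestricting $u$ to $\br{S}$ really produces a \emph{bilinear} map and not merely a map of underlying $\MM$-graphs. This reduces to the two remarks that a sub-multicategory inherits every commuting diagram, and that each multifunctor $u(a,\blank)$ (resp.\ $u(\blank,b)$) lands in $\br{S}$ precisely because its morphism-values are exactly the generators in $S$. Granting that, the remainder is a formal application of the universal property.
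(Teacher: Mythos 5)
Your argument is correct and is exactly the route the paper intends: the paper simply asserts that the proposition follows from the universal property of the tensor product, and your retraction argument (corestricting the universal bilinear map to $\br{S}$ and invoking uniqueness to force $\iota\circ r=\id$) is the standard way to make that assertion precise. The points you flag as needing care --- that $\br{S}$ contains all objects via $a\otimes 1_b=1_{a\otimes b}$, and that the corestriction is still bilinear because a sub-multicategory inherits the relevant commuting diagrams --- are handled correctly.
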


Combining the previous proposition with Lemma~\ref{natural}, we obtain
the following proposition.

\begin{proposition}\label{tensorGen}
The $k$-morphisms of $\Hom(M\otimes N,P)$ are
precisely those functions as in Lemma~\ref{natural} that are natural
with respect to all morphisms of the form
$a\otimes \psi$ or $\phi\otimes b$.
\end{proposition}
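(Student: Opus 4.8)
The plan is to read this proposition off directly as the conjunction of the two results immediately preceding it, so the argument is essentially formal. The only real content is to recognize that the morphisms $a\otimes\psi$ and $\phi\otimes b$ play exactly the role of a generating set in the sense demanded by Lemma~\ref{natural}, and that the objects of $M\otimes N$ are the pairs on which such a natural transformation is defined. I would prove the two inclusions separately.

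First I would dispense with the easy direction. By Definition~\ref{Homs}, a $k$-morphism of $\Hom(M\otimes N,P)$ is a function $\xi$ that assigns to each object of $M\otimes N$ a $k$-morphism of $P$ and for which the naturality square of Definition~\ref{Homs} commutes for \emph{every} morphism of $M\otimes N$; in particular it commutes for the morphisms $a\otimes\psi$ and $\phi\otimes b$. Thus every $k$-morphism of $\Hom(M\otimes N,P)$ is, in particular, a function of the stated type that is natural with respect to all morphisms of the form $a\otimes\psi$ and $\phi\otimes b$.

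For the reverse inclusion, suppose $\xi$ assigns to each object of $M\otimes N$ --- equivalently, to each pair $(a,b)\in\obj{M}\times\obj{N}$, using the identification of objects recorded at the end of Construction~\ref{tensorconstruction} --- a $k$-morphism $\xi_{(a,b)}\colon(f_1(a,b),\dots,f_k(a,b))\to g(a,b)$ of $P$, and assume the naturality square of Definition~\ref{Homs} commutes for all morphisms of the form $a\otimes\psi$ and $\phi\otimes b$. By the preceding proposition these morphisms generate $M\otimes N$, so they form a generating set for it. Applying Lemma~\ref{natural} with $A=M\otimes N$, $B=P$, and this generating set then shows that the square commutes for every morphism of $M\otimes N$; hence $\xi$ is a $k$-natural transformation, i.e.\ a $k$-morphism of $\Hom(M\otimes N,P)$.

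I do not expect any genuine obstacle here: the substance has already been isolated into Lemma~\ref{natural} (naturality on a generating set propagates to all morphisms) and into the preceding proposition (the elements $a\otimes\psi$ and $\phi\otimes b$ generate $M\otimes N$), and this proposition is simply their combination. The only point requiring a moment's care is the identification of $\obj{M\otimes N}$ with $\obj{M}\times\obj{N}$, which guarantees that the datum of $\xi$ is exactly the pairwise family $\xi_{(a,b)}$ appearing in the statement, so that Lemma~\ref{natural} applies verbatim.
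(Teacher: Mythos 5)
Your proposal is correct and matches the paper exactly: the paper derives this proposition by combining the preceding proposition (that the morphisms $a\otimes\psi$ and $\phi\otimes b$ generate $M\otimes N$) with Lemma~\ref{natural}, which is precisely your argument. The additional care you take with the identification of $\obj{M\otimes N}$ with $\obj{M}\times\obj{N}$ is consistent with the remark at the end of Construction~\ref{tensorconstruction}.
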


The enriched adjunction we desire is now the following.

\begin{proposition} The adjunction
$$\Mult(M\otimes N,P)\cong\Mult(M,\Hom(N,P))
$$
enriches to a natural isomorphism of multicategories
$$\Hom(M\otimes N,P)\cong\Hom(M,\Hom(N,P)).
$$
\end{proposition}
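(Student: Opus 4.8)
The plan is to promote the object-level adjunction $\Mult(M\otimes N,P)\cong\Mult(M,\Hom(N,P))$ to the full internal Hom multicategories by routing through the multicategory $\Bilin(M,N;P)$. The corollary to Definition~\ref{110} already supplies a natural isomorphism of multicategories $\Bilin(M,N;P)\cong\Hom(M,\Hom(N,P))$, so it suffices to produce a natural isomorphism of multicategories
$$\Hom(M\otimes N,P)\cong\Bilin(M,N;P)$$
and compose the two. On objects this is immediate: an object of $\Hom(M\otimes N,P)$ is a multifunctor $M\otimes N\to P$, and by the defining universal property of the tensor product (Construction~\ref{tensorconstruction}, Theorem~\ref{tensorprod}) these correspond bijectively and naturally to bilinear maps $(M,N)\to P$, i.e.\ to the objects of $\Bilin(M,N;P)$.

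The real content is the match on $k$-morphisms. Given objects $f_1,\dots,f_k,g$ of $\Hom(M\otimes N,P)$ corresponding to bilinear maps of the same names, Proposition~\ref{tensorGen} says that a $k$-morphism $\xi$ from $(f_1,\dots,f_k)$ to $g$ is exactly a function assigning to each object $(a,b)\in\Ob(M)\times\Ob(N)$ a $k$-morphism $\xi_{(a,b)}$ in $P$, subject only to naturality with respect to the generating morphisms $\phi\otimes b$ and $a\otimes\psi$. I would write out the naturality square of Definition~\ref{Homs} for the generator $\phi\otimes b$, where $\phi\colon(a_1,\dots,a_m)\to a'$; since the multifunctor corresponding to $f_j$ sends $\phi\otimes b$ to the bilinear structure map $f_j(\phi,b)$, and likewise for $g$, this square reproduces verbatim the first commuting diagram of Definition~\ref{110}. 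The generator $a\otimes\psi$ reproduces the second diagram in the same way. Hence the admissible functions $\xi$ are literally the $k$-morphisms of $\Bilin(M,N;P)$, and the assignment is a bijection on each $k$-morphism set.

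It then remains to check that this bijection is an isomorphism of multicategories and is natural in all three variables. Both the $\Sigma_k$-actions and the compositions on $\Hom(M\otimes N,P)$ and on $\Bilin(M,N;P)$ are induced objectwise from those of $P$ (Definitions~\ref{Homs} and~\ref{110}), so the bijection respects them automatically. Naturality in $M$, $N$, and $P$ follows because every ingredient---the universal bilinear map, the generating morphisms $\phi\otimes b$ and $a\otimes\psi$, and the objectwise structure maps---is natural in the respective variables. Composing with the corollary to Definition~\ref{110} then yields the claimed $\Hom(M\otimes N,P)\cong\Hom(M,\Hom(N,P))$.

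The main obstacle is the $k$-morphism identification, and its difficulty is entirely absorbed by Proposition~\ref{tensorGen}: the substantive point is that a candidate $k$-natural transformation out of $M\otimes N$ need only be tested on the generators $\phi\otimes b$ and $a\otimes\psi$ (via Lemma~\ref{natural}), after which commutativity over all of $M\otimes N$ is automatic. Once that reduction is in hand, the remainder is the purely clerical comparison of two diagrams that were deliberately set up to coincide.
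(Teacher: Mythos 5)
Your proposal is correct and follows essentially the same route as the paper: both factor the isomorphism through $\Bilin(M,N;P)$, using Lemma~\ref{natural} and Proposition~\ref{tensorGen} to reduce the $k$-morphism comparison to naturality on the generators $\phi\otimes b$ and $a\otimes\psi$, and then compose with the isomorphism $\Bilin(M,N;P)\cong\Hom(M,\Hom(N,P))$ from the corollary to Definition~\ref{110}. Your write-up actually spells out the diagram-matching step in more detail than the paper does.
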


\begin{proof}  Lemma \ref{natural} and Proposition \ref{tensorGen}
show that the isomorphism on objects
$$\Mult(M\otimes N,P)\cong\Bilin(M,N;P)
$$
also gives an isomorphism of multicategories
$$\Hom(M\otimes N,P)\cong\Bilin(M,N;P)
$$
using the morphisms on the right given in Definition \ref{110}.
However, these morphisms are precisely those giving an isomorphism
of multicategories
$$\Bilin(M,N;P)\cong\Hom(M,\Hom(N,P)),
$$
and composing these isomorphisms gives the desired enriched
adjunction.
\end{proof}

The proof that the tensor product is associative now proceeds as
follows. We have
\begin{gather}
\Mult((M\otimes N)\otimes P,Q)\cong \Mult(M\otimes
N,\Hom(P,Q))\notag
\\ \cong\Mult(M,\Hom(N,\Hom(P,Q))) \cong\Mult(M,\Hom(N\otimes
P,Q))\notag
\\ \cong\Mult(M\otimes(N\otimes P),Q).\notag
\end{gather}
The result now follows from the Yoneda Lemma. The analogous argument
with four factors proves that this associativity isomorphism satisfies
the pentagon law.  The unit diagrams are clear, and this completes the
proof that $\Mult$ is symmetric monoidal, closed, and bicomplete.

Next, we wish to establish the same properties for $\Multstar$. We
exploit the following general construction and lemma about
symmetric monoidal closed bicomplete categories.

\begin{construction}\label{smash} Let $(\CC,\otimes,\hom)$ be a symmetric
monoidal closed bicomplete category with terminal object $t$, and
let $\CC_*$ be the category of objects under $t$ in $\CC$.  For
objects $a$ and $b$ in $\CC_*$, we define their \emph{smash
product} $a\wedge b$ to be the object of $\CC_*$ given by the
following pushout in $\CC$:
$$\xymatrix{
(a\otimes t)\amalg(t\otimes b)\ar[r]\ar[d]&a\otimes b\ar[d]
\\t\ar[r]&a\wedge b.}
$$
We also define the \emph{based hom object} for $a$ and $b$ to be
the pullback in $\CC$ given in the following diagram:
$$\xymatrix{
\hom_*(a,b)\ar[r]\ar[d]&t\ar[d]
\\ \hom(a,b)\ar[r]&\hom(t,b).}
$$
The arrows in the pullback system are induced by the structure maps
for $a$ and $b$ and the isomorphism $t\cong\hom(t,t)$ that comes from
the fact that $\hom(t,-)$ preserves products (and $t$ is the empty
product).  The composite
$$t\cong\hom(a,t)\to\hom(a,b)\to\hom(t,b)$$
coincides with the given arrow from $t$ to $\hom(t,b)$, so induces
a structure map for $\hom_*(a,b)$ as an object of $\CC_*$.
\end{construction}

\begin{lemma}\label{SmashMonoidal} Construction \ref{smash} makes
$\CC_*$ into a symmetric monoidal closed bicomplete category.
\end{lemma}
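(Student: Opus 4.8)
The plan is to transport the structure of $(\CC,\otimes,\hom)$ across the free/forgetful adjunction between $\CC$ and $\CC_*$, verifying bicompleteness, the $\wedge$--$\hom_*$ adjunction, and the symmetric monoidal axioms in turn. Bicompleteness is formal: since $\CC_*=t/\CC$ is the category of objects under the terminal object, the forgetful functor $U\colon\CC_*\to\CC$ creates all limits (any limit cone over based objects inherits a canonical basepoint), while a colimit in $\CC_*$ is the colimit in $\CC$ of the given diagram with $t$ adjoined as an initial object; both exist because $\CC$ is bicomplete. I record the left adjoint $(\blank)_+\colon\CC\to\CC_*$, $x\mapsto x\amalg t$ (basepoint the coproduct inclusion of $t$), and I claim the unit object for $\wedge$ is $I_+=I\amalg t$, where $I$ is the unit of $\otimes$.

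The crucial step is the closed adjunction $\CC_*(a\wedge b,c)\cong\CC_*(a,\hom_*(b,c))$, which I would read directly off the defining (co)limits. By the pushout defining $a\wedge b$, a based map $a\wedge b\to c$ is exactly a map $g\colon a\otimes b\to c$ in $\CC$ whose restrictions along $a\otimes t\to a\otimes b$ and $t\otimes b\to a\otimes b$ are both the basepoint of $c$. Taking the adjoint $\tilde g\colon a\to\hom(b,c)$ under the closed structure of $\CC$, the first condition says precisely that $\tilde g$ factors through the pullback $\hom_*(b,c)$, and the second says precisely that $\tilde g$ carries the basepoint of $a$ to the basepoint of $\hom_*(b,c)$; hence $\tilde g$ is a based map $a\to\hom_*(b,c)$, naturally in all three variables. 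The symmetry of $\wedge$ comes from that of $\otimes$, since the symmetry isomorphism $a\otimes b\cong b\otimes a$ carries the wedge of $a\wedge b$ onto that of $b\wedge a$.

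The unit and associativity I would then obtain by Yoneda in $\CC_*$. For the unit, $\CC_*(I_+\wedge a,c)\cong\CC_*(I_+,\hom_*(a,c))\cong\CC(I,\hom_*(a,c))\cong\CC_*(a,c)$, using $(\blank)_+\dashv U$ and the unit isomorphism of $\otimes$ cut down to based maps; thus $I_+\wedge a\cong a$, and $a\wedge I_+\cong a$ by symmetry. For associativity, I would show that $(a\wedge b)\wedge c$ and $a\wedge(b\wedge c)$ corepresent the same functor: unwinding the defining pushouts and using that $\blank\otimes c$ and $a\otimes\blank$ preserve colimits (by closedness of $\CC$), a based map out of either iterated smash into $d$ is exactly a based map $a\otimes b\otimes c\to d$ annihilating the \emph{fat wedge} $(t\otimes b\otimes c)\cup(a\otimes t\otimes c)\cup(a\otimes b\otimes t)$. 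The associativity isomorphism of $\otimes$ in $\CC$ matches the two descriptions naturally, so Yoneda delivers the associativity isomorphism of $\wedge$; the same computation with four factors proves the pentagon, and the triangle and hexagon reduce likewise to the coherence of $(\CC,\otimes)$.

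The main obstacle is this last paragraph: confirming that the iterated smash (equivalently, the iterated based hom) genuinely corepresents the fat-wedge-annihilating maps, and that every structure isomorphism so obtained is induced by the corresponding isomorphism of $(\CC,\otimes)$ rather than introduced by hand. The delicate points are that $\blank\otimes c$ preserves the pushout defining $\wedge$ and, dually, that $\hom(b,\blank)$ preserves the pullback defining $\hom_*$ --- both guaranteed by the closedness of $\CC$ --- so that no compatibility is lost on passing to $\CC_*$ and the coherence axioms are inherited from $\CC$ instead of being re-proved.
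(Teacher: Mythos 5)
Your proof is correct, and although the overall skeleton (bicompleteness is formal for an under-category; establish the $\wedge$--$\hom_*$ adjunction; obtain the unit and associativity by Yoneda) agrees with the paper's, the two key steps are carried out by genuinely different means. The paper's central computation is the \emph{enriched} adjunction $\hom_*(a,\hom_*(b,c))\cong\hom_*(a\wedge b,c)$, established by an extended manipulation of pullback and pushout diagrams in $\CC$ (using that $\hom(a,{-})$ preserves limits and $\hom({-},c)$ turns colimits into limits); the set-level adjunction is then obtained by rerunning the same argument on hom-sets, and associativity follows by Yoneda via the chain $\CC_*((a\wedge b)\wedge c,d)\cong\CC_*(a,\hom_*(b,\hom_*(c,d)))\cong\CC_*(a,\hom_*(b\wedge c,d))\cong\CC_*(a\wedge(b\wedge c),d)$. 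You instead verify only the external adjunction $\CC_*(a\wedge b,c)\cong\CC_*(a,\hom_*(b,c))$, reading it directly off the universal properties of the defining pushout and pullback, and you get associativity not through the iterated internal hom but by showing both iterated smashes corepresent maps out of $a\otimes b\otimes c$ annihilating the fat wedge --- which, as you correctly flag, needs ${-}\otimes c$ to preserve the pushout defining $\wedge$, i.e., the closedness of $\CC$. Your route is shorter and avoids the long diagram chase; the paper's route produces the enriched adjunction as an explicit byproduct (though once associativity is in hand, the enriched adjunction also follows from your version by Yoneda, so nothing is lost). Both arguments treat the pentagon and the remaining coherence diagrams at the same level of detail (``the analogous argument with more factors''), so your write-up is no less complete than the source on that point.
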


\begin{proof} First, $\CC_*$ is bicomplete, being a slice category
of $\CC$.  The definition makes it clear that $\wedge$ is
symmetric.  The rest of the claim, as in the proof of Theorem
\ref{tensormonoidal}, relies on an enriched adjunction,
$$\hom_*(a,\hom_*(b,c))\cong\hom_*(a\wedge b,c),
$$
which we establish first.  Since $\hom(a,x)$ for a constant object
$a$ is a right adjoint, it preserves limits in $x$, and in
particular pullbacks.  Consequently, we can display the left side
of the adjunction we seek as part of the following diagram, in
which there are three pullbacks: the top rectangle, and the left
and right sides of the cubical diagram to which it is connected:
$$\xymatrix@C-40pt{
\hom_*(a,\hom_*(b,c))\ar[rr]\ar[d]
&&t\ar[d]
\\ \hom(a,\hom_*(b,c))\ar[rr]\ar[dr]\ar[dd]
&&\hom(t,\hom_*(b,c))\ar[dr]\ar[dd]
\\&t\ar[rr]\ar[dd]&&t\ar[dd]
\\ \hom(a,\hom(b,c))\ar[rr]\ar[dr]
&&\hom(t,\hom(b,c))\ar[dr]
\\&\hom(a,\hom(t,c))\ar[rr]&&\hom(t,\hom(t,c)).}
$$
Next, observe that $\hom(a,\hom(b,c))\cong\hom(a\otimes b,c)$ in
$\CC$ as a consequence of the associativity of $\otimes$, so we
can rewrite the bottom of our diagram to get
$$\xymatrix@C-40pt{
\hom_*(a,\hom_*(b,c))\ar[rr]\ar[d]
&&t\ar[d]
\\ \hom(a,\hom_*(b,c))\ar[rr]\ar[dr]\ar[dd]
&&\hom(t,\hom_*(b,c))\ar[dr]\ar[dd]
\\&t\ar[rr]\ar[dd]&&t\ar[dd]
\\ \hom(a\otimes b,c)\ar[rr]\ar[dr]
&&\hom(t\otimes b,c)\ar[dr]
\\&\hom(a\otimes t,c)\ar[rr]&&\hom(t\otimes t,c).}
$$
Now observe that on the right side of the diagram, we have the
vertical composite
$$t\to\hom(t,\hom_*(b,c))\to\hom(t\otimes b,c),
$$
which coincides with
$$t\cong\hom(t\otimes b,t)\to\hom(t\otimes b,c).
$$
Consequently, the diagram actually displays
$\hom_*(a,\hom_*(b,c))$ as the limit of the following diagram:
$$\xymatrix{
\hom(a\otimes b,c)\ar[drr]\ar[ddr]
\\&t\ar[r]\ar[d]&\hom(t\otimes b,c)\ar[d]
\\&\hom(a\otimes t,c)\ar[r]&\hom(t\otimes t,c).}
$$
Since both squares commute, we can remove the $\hom(t\otimes t,c)$
and consequently have $\hom_*(a,\hom_*(b,c))$ as the limit of the
smaller diagram
$$\xymatrix{
\hom(a\otimes b,c)\ar[r]\ar[d]&\hom(t\otimes b,c)
\\ \hom(a\otimes t,c)&t.\ar[l]\ar[u]}
$$
Since $\otimes$ preserves coproducts, being a left adjoint, and is
symmetric, we have
$$\hom(a\otimes t,c)\times\hom(t\otimes b,c)\cong\hom((a\otimes
t)\amalg(t\otimes b),c),
$$
so the pairs of arrows out of a single source can be combined, and
we can display $\hom_*(a,\hom_*(b,c))$ as the pullback in the
diagram
$$\xymatrix{
\hom_*(a,\hom_*(b,c))\ar[r]\ar[d]
&\hom(a\otimes b,c)\ar[d]
\\ t\ar[r]&\hom((a\otimes t)\amalg(t\otimes b),c).}
$$
Next, $\hom(x,c)$ sends colimits in $x$ to limits, again by
adjointness, so the pushout defining $a\wedge b$ gives us a
pullback
$$\xymatrix{
\hom(a\wedge b,c)\ar[r]\ar[d]&\hom(a\otimes b,c)\ar[d]
\\ \hom(t,c)\ar[r]&\hom((a\otimes t)\amalg(t\otimes b),c).}
$$
This in turn pastes onto the pullback diagram defining $\hom_*$,
giving us a composite pullback
$$\xymatrix{
\hom_*(a\wedge b,c)\ar[r]\ar[d]
&\hom(a\wedge b,c)\ar[r]\ar[d] &\hom(a\otimes b,c)\ar[d]
\\t\ar[r]&\hom(t,c)\ar[r]
&\hom((a\otimes t)\amalg(t\otimes b),c).}
$$
By the uniqueness of pullbacks, we get the enriched adjunction we
claimed.

Next, the same argument, but with the outer $\hom$'s replaced with
$\CC$'s and the outer $\hom_*$'s replaced with $\CC_*$'s shows
that $\hom_*$ really is right adjoint to $\wedge$, i.e.,
$$\CC_*(a\wedge b,c)\cong\CC_*(a,\hom_*(b,c))
$$
natural in $a$, $b$, and $c$; note that by definition,
$$\xymatrix{
\CC_*(a,b)\ar[r]\ar[d]&{*}\ar[d]
\\ \CC(a,b)\ar[r]&\CC(t,b)}
$$
is a pullback (of sets.)  For associativity of $\wedge$, we can
now use the Yoneda Lemma:
\begin{gather}
\CC_*((a\wedge b)\wedge c,d)\cong\CC_*(a\wedge b,\hom_*(c,d))
\cong\CC_*(a,\hom_*(b,\hom_*(c,d)))\notag
\\ \cong\CC_*(a,\hom_*(b\wedge c,d)) \cong\CC_*(a\wedge(b\wedge
c),d).\notag
\end{gather}
Consequently, $(a\wedge b)\wedge c\cong a\wedge(b\wedge c)$,
naturally in $a$, $b$, and $c$.  The unit for $\wedge$ is easily
seen to be $e\amalg t$, where $e$ is the unit for $\otimes$. This
concludes the proof.
\end{proof}

\begin{corollary}\label{SMC} The category $\Multstar$ of pointed
multicategories is bicomplete and symmetric monoidal closed using
this smash product construction.
\end{corollary}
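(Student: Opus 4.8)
The plan is to obtain Corollary \ref{SMC} as a direct application of Lemma \ref{SmashMonoidal} to the category $\Mult$. The essential observation is that the category $\Multstar$ of based multicategories is precisely the category $\Mult_*$ of objects under the terminal object in $\Mult$: a based multicategory is by definition a multicategory $M$ equipped with a preferred multifunctor $*\to M$, and a based multifunctor is exactly a multifunctor under $*$, so $\Multstar$ and $\Mult_*$ coincide on the nose. Thus the whole task reduces to checking that $\Mult$ meets the hypotheses of Construction \ref{smash} and Lemma \ref{SmashMonoidal}.

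First I would record that $\Mult$ is a symmetric monoidal closed bicomplete category with a terminal object. The symmetric monoidal closed structure, with tensor product $\otimes$ of Construction \ref{tensorconstruction} and internal hom $\Hom$ of Definition \ref{Homs}, is exactly the content of Theorem \ref{tensormonoidal}; bicompleteness follows from the completeness noted at the start of this section together with the cocompleteness of Corollary \ref{cocomplete}; and the terminal object is the multicategory $*$ with a single object and one $k$-morphism for each $k$. Taking $\CC=\Mult$, $t=*$, and $\hom=\Hom$ in Lemma \ref{SmashMonoidal} then immediately yields that $\Mult_*=\Multstar$ is bicomplete and symmetric monoidal closed, with monoidal product the smash product $\sma$ and internal hom the based hom object of Construction \ref{smash}. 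This is exactly the assertion of the corollary.

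Since Lemma \ref{SmashMonoidal} does all of the structural work, there is essentially no obstacle at the level of the bare statement. The one point that genuinely demands care lies just beyond it, in the promise made at the opening of the section that this symmetric monoidal closed structure has as its underlying multicategory the structure on $\Multstar$ described in Section \ref{mult}. Verifying that amounts to matching the universal bilinear maps into $M\sma N$ against the based bilinear maps of Section \ref{mult}: one unwinds the pushout in Construction \ref{smash} together with the universal property of $\otimes$ to see that collapsing the images of $a\otimes t$ and $t\otimes b$ forces precisely the condition that output data come from the basepoint whenever input data do, so the based bilinear maps represented by $M\sma N$ coincide with those defined directly. This is bookkeeping rather than conceptual difficulty, but it is the step I would expect to consume the most effort in a fully detailed treatment.
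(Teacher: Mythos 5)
Your proposal is correct and follows exactly the paper's route: the corollary is obtained by applying Lemma \ref{SmashMonoidal} to $\CC=\Mult$ with terminal object $*$, using Theorem \ref{tensormonoidal} for the symmetric monoidal closed structure and Corollary \ref{cocomplete} (plus the completeness remark at the start of the section) for bicompleteness, after identifying $\Multstar$ with $\Mult_*$. Your closing observation about matching the smash product's universal property against the based bilinear maps of Section \ref{mult} is precisely the verification the paper explicitly leaves to the reader, so nothing is missing.
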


We leave to the reader the straightforward task of verifying that
the underlying multicategory structure for this symmetric monoidal
structure on $\Multstar$ coincides with the one specified in
Section \ref{mult}.

\section{The $K$-theory of multicategories}
\label{Kmult}

This section is devoted to the description of our lax symmetric monoidal
$K$-theory functor from $\Multstar$ to symmetric spectra, and the
following section will show it is consistent with the $K$-theory
of permutative categories described in \cite{RMA}.

As mentioned in the introduction, our construction is the
composite of two functors, with the intermediate category being
the category of $\Gstar$-categories introduced in \cite{RMA},
Section 5 (and given a simplified description here), and with the
functor from $\GstarCat$ to symmetric spectra being the one
described in
\cite{RMA}, Section 7.  We are therefore left with the task of
describing a lax symmetric monoidal functor $\Jhat$ from
$\Multstar$ to $\GstarCat$, and from the construction, this
functor will actually be representable in a sense that we will
make clear below.

We begin with the following general categorical proposition, which
we will need in two separate places. We would like to thank Mike
Shulman for pointing it out to us, as well as for noting that it
is a special case of
\cite{Bor}, Proposition 4.2.3.  We provide a brief proof (due to
Shulman) for the convenience of the reader.

\begin{proposition}\label{coreprops}
Let $(\C,\wedge,1)$ be a monoidal category, $A$ an object of $\C$,
and let $\mu:A\wedge A\to A$ be an isomorphism providing the
multiplicative structure map for $A$ as a monoid in $\C$.  Then
for any left $A$-module $M$, the structure map $\xi:A\wedge M\to
M$ is an isomorphism, the category of left $A$-modules is a full
subcategory of $\C$, and similarly for right $A$-modules. Further,
given a left $A$-module $M$ and a right $A$-module $N$, the two
maps
$$\xymatrix{
N\wedge A\wedge M\ar@<.5ex>[r]^-{\xi_N\wedge1}
\ar@<-.5ex>[r]_-{1\wedge\xi_M}
&N\wedge M}
$$
coincide, so the canonical map $N\wedge M\to N\wedge_A M$ is an
isomorphism.
\end{proposition}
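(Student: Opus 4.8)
The plan is to reduce everything to a single preliminary identity and then read off all three claims formally. By Mac Lane's coherence theorem I may and will suppress the associativity and unit constraints, treating $(\C,\wedge,1)$ as strict; the general statement follows at once. The crucial observation is that the monoid unit axioms $\mu\circ(\eta\wedge 1_A)=1_A=\mu\circ(1_A\wedge\eta)$, combined with the hypothesis that $\mu$ is invertible, give immediately
$$\eta\wedge 1_A=1_A\wedge\eta=\mu^{-1}.$$
This is the \emph{only} place the invertibility of $\mu$ is used, and it is the linchpin of the argument. Equivalently, it expresses that the monad $A\wedge(\blank)$ on $\C$ is idempotent, and the proposition is the unwinding of the standard facts about idempotent monads.

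For the first claim I would show that $\eta\wedge 1_M\colon M\to A\wedge M$ is a two-sided inverse of $\xi\colon A\wedge M\to M$. One direction, $\xi\circ(\eta\wedge 1_M)=1_M$, is exactly the unit axiom for the left $A$-module $M$. For the other, naturality of the transformation $\eta\wedge(\blank)$ evaluated at the morphism $\xi$ gives
$$(\eta\wedge 1_M)\circ\xi=(1_A\wedge\xi)\circ(\eta\wedge 1_{A\wedge M}).$$
Rewriting $\eta\wedge 1_{A\wedge M}=\eta\wedge 1_A\wedge 1_M=1_A\wedge\eta\wedge 1_M$ by the linchpin identity, the right-hand side becomes $1_A\wedge\bigl(\xi\circ(\eta\wedge 1_M)\bigr)=1_{A\wedge M}$, so $(\eta\wedge 1_M)\circ\xi=1_{A\wedge M}$. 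Thus $\xi$ is an isomorphism with explicit inverse $\eta\wedge 1_M$; notably this uses only the module unit axiom and the linchpin identity, not the associativity axiom.

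The remaining claims are then purely formal. For fullness, given modules $M,N$ and an arbitrary morphism $f\colon M\to N$ of underlying objects, I would verify the module-map condition $f\circ\xi_M=\xi_N\circ(1_A\wedge f)$ by precomposing with the isomorphism $\eta\wedge 1_M=\xi_M^{-1}$: functoriality of $\wedge$ rewrites $(1_A\wedge f)\circ(\eta\wedge 1_M)=(\eta\wedge 1_N)\circ f$, and the unit axioms for $M$ and $N$ collapse both composites to $f$; since $\eta\wedge 1_M$ is epic the condition holds. Hence left $A$-modules form a full subcategory of $\C$, and the right case is identical. For the last claim, both $\xi_N\wedge 1_M$ and $1_N\wedge\xi_M$ are isomorphisms (being the tensor of an identity with $\xi_N$, resp.\ $\xi_M$), and each admits $1_N\wedge\eta\wedge 1_M$ as a right inverse, by the right-unit axiom for $N$, resp.\ the left-unit axiom for $M$. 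Since a right inverse of an isomorphism is its two-sided inverse, both maps equal $(1_N\wedge\eta\wedge 1_M)^{-1}$ and therefore coincide. The coequalizer defining $N\wedge_A M$ is then a coequalizer of a pair of equal maps, namely its own codomain $N\wedge M$, so the canonical map $N\wedge M\to N\wedge_A M$ is an isomorphism.

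The main obstacle is concentrated entirely in the first claim, and within it in the linchpin identity $\eta\wedge 1_A=1_A\wedge\eta$, which is where the hypothesis on $\mu$ does all the work; once $\xi$ is known to be invertible with inverse $\eta\wedge 1_M$, fullness and the collapse of the balanced product are routine manipulations of the unit axioms and the interchange law. The only point I would want to check carefully is that the reduction to the strict case hides no unitor that fails to cancel, but since every equation invoked is built from the monoid axioms, the module axioms, and functoriality of $\wedge$, this is straightforward.
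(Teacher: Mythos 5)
Your proposal is correct and follows essentially the same route as the paper: the same linchpin identity $\eta\wedge 1=\mu^{-1}=1\wedge\eta$, the same naturality-of-$\eta\wedge(\blank)$ argument showing $\eta\wedge 1_M=\xi^{-1}$, the same deduction of fullness from naturality of $\eta$, and the same identification of $1_N\wedge\eta\wedge 1_M$ as a common inverse of the two maps $N\wedge A\wedge M\to N\wedge M$. The only cosmetic difference is that you make the reduction to the strict case and the idempotent-monad interpretation explicit, which the paper leaves implicit.
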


\begin{proof}
Let $\eta:1\to A$ be the unit map for $A$ as a monoid in $\C$.
Then the commutativity of
$$\xymatrix{
A\ar[r]^-{\eta\wedge1}\ar[dr]_-{=} &A\wedge
A\ar[d]^-{\mu}_-{\cong}&A\ar[l]_-{1\wedge\eta}\ar[dl]^-{=}
\\&A}
$$
shows that $\eta\wedge1=\mu^{-1}=1\wedge\eta$.  Now let $M$ be a
left $A$-module with structure map $\xi:A\wedge M\to M$.  Then the
commutative naturality diagram
$$\xymatrix{
A\wedge M\ar[r]^-{\eta\wedge1\wedge1}\ar[d]_-{\xi} &A\wedge
A\wedge M\ar[d]^-{1\wedge\xi}
\\M\ar[r]_-{\eta\wedge1}&A\wedge M}
$$
can be rewritten by replacing $\eta\wedge1$ with $1\wedge\eta$ in
the top arrow, resulting in
$$\xymatrix{
A\wedge M\ar[r]^-{1\wedge\eta\wedge1}\ar[d]_-{\xi} &A\wedge
A\wedge M\ar[d]^-{1\wedge\xi}
\\M\ar[r]_-{\eta\wedge1}&A\wedge M.}
$$
Now since $\xi\circ(\eta\wedge1)=\id_M$, the clockwise composite
is $\id_{A\wedge M}$, which shows that
$(\eta\wedge1)\circ\xi=\id_{A\wedge M}$, so
$\eta\wedge1=\xi^{-1}$, and $\xi$ is an isomorphism.  The
analogous argument holds for right $A$-modules.

Next, the naturality of $\eta$ shows that for all $f:M\to N$ in
$\C$, the following diagram commutes:
$$\xymatrix{
A\wedge M\ar[r]^-{1\wedge f}&A\wedge N
\\M\ar[u]^-{\eta\wedge1}_{\cong}\ar[r]^-{f}
&N.\ar[u]^-{\cong}_-{\eta\wedge1}}
$$
But since $\eta\wedge1=\xi^{-1}$ for left $A$-modules, it follows
that if $M$ and $N$ are left $A$-modules,
$$\xymatrix{
A\wedge M\ar[r]^-{1\wedge f}\ar[d]_-{\xi}&A\wedge N\ar[d]^-{\xi}
\\M\ar[r]^-{f}&N}
$$
commutes.  Consequently left $A$-modules form a full subcategory
of $\C$, and similarly for right $A$-modules.

Finally, given a left $A$-module $M$ and a right $A$-module $N$,
$$\xymatrix{
N\wedge M\ar[r]^-{1\wedge\eta\wedge1}&N\wedge A\wedge M}
$$
is inverse to both $1\wedge\xi_M$ and $\xi_N\wedge1$, so the two
maps coincide.
\end{proof}

Our first use of Proposition \ref{coreprops} will be to formalize some machinery involving the smash product of based categories; this in turn is used to describe the category $\Gstar$ and the category of $\Gstar$-categories.

From Lemma \ref{SmashMonoidal}, we know the category $\Catstar$ of
based categories is symmetric monoidal, closed, and bicomplete,
where a based category is simply a category with a selected
object.  In particular, there is \emph{no} property the basepoint
object must satisfy.  On the other hand, when we require the basepoint
object to be null (initial and final), the morphism sets become based,
and we have the following straightforward description of the smash
product.

\begin{proposition}\label{nullsmash}
If $\C$ and $\D$ are based categories with
null basepoint objects, then:
\begin{enumerate}
\item $\C\wedge\D$ has null basepoint
object.
\item $\Ob(\C\wedge\D)\cong \Ob\C\wedge \Ob\D$
\item For
any objects $a_1$, $a_2$ of $\C$, and objects $b_1$, $b_2$ of
$\D$,
$$(\C\wedge\D)((a_1,b_1),(a_2,b_2))
\cong\C(a_1,a_2)\wedge\D(b_1,b_2).
$$
\end{enumerate}
\end{proposition}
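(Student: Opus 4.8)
The plan is to identify $\C\wedge\D$ explicitly by verifying a universal property, rather than by grinding out the pushout of Construction~\ref{smash}. Write $*_\C$ and $*_\D$ for the (null) basepoints, and let $W\subseteq\C\times\D$ be the ``wedge'', i.e.\ the image of $(\C\times t)\amalg(t\times\D)$, consisting of the objects $(a,*_\D)$ and $(*_\C,b)$ together with the morphisms $(f,1_{*_\D})$ and $(1_{*_\C},g)$. Unwinding the defining pushout, a functor $\C\wedge\D\to\E$ is the same datum as a functor $H\colon\C\times\D\to\E$ that is constant at a single object $e_0$ on $W$; concretely $H(a,*_\D)=H(*_\C,b)=e_0$ and $H(f,1_{*_\D})=H(1_{*_\C},g)=1_{e_0}$ for all $f$ in $\C$ and $g$ in $\D$. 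Part (2) I would dispatch immediately: $\Ob\colon\Cat\to\Set$ is a left adjoint (its right adjoint sends a set to the indiscrete category on it), so it preserves the defining pushout, and applying it gives $\Ob(\C\wedge\D)\cong\Ob\C\wedge\Ob\D$.

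For parts (1) and (3) I would build a candidate category $Q$ and prove $Q\cong\C\wedge\D$. Since $*_\C$ and $*_\D$ are null, each hom set $\C(a_1,a_2)$ and $\D(b_1,b_2)$ is canonically based by its zero morphism (the composite through the basepoint), so I set $\Ob Q:=\Ob\C\wedge\Ob\D$, declare the adjoined point $\star$ to be a null object, and put $Q((a_1,b_1),(a_2,b_2)):=\C(a_1,a_2)\wedge\D(b_1,b_2)$ on the remaining objects. Composition $[f',g']\circ[f,g]:=[f'f,g'g]$ is well defined because composing with a zero morphism yields a zero morphism (again using nullness), so $Q$ is a genuine category, enriched in based sets, with null basepoint. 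There is an evident functor $P\colon\C\times\D\to Q$ carrying $(f,g)$ to $[f,g]$ (and anything touching a basepoint coordinate to $\star$), and it is constant on $W$; if $P$ is the \emph{universal} such functor, then $Q\cong\C\wedge\D$, which yields (1) and (3) at once.

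The heart of the matter, and the step I expect to be the main obstacle, is verifying that $P$ is initial among functors out of $\C\times\D$ that are constant on $W$, i.e.\ that every $H$ satisfying the property above factors uniquely as $\bar H\circ P$. The subtlety is that $W$ is \emph{not} a subcategory, so I must check that the only identifications $H$ is forced to make are exactly the smash relations built into $Q$. This is where nullness does the work: given a morphism $(f,g)$ one of whose components is a zero morphism, a short factorization through $W$, for instance $(f,0_{b_1,*_\D})=(f,1_{*_\D})\circ(1_{a_1},0_{b_1,*_\D})$, together with $H(f,1_{*_\D})=1_{e_0}$, forces $H(f,g)$ to be a composite through $e_0$; and comparing the two ways of factoring the double zero morphism $(0_{a_1,*_\C},0_{b_1,*_\D})$ through $W$ shows that all morphisms out of a fixed object into $W$ receive one and the same $H$-value. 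Consequently $H$ collapses precisely the basepoint of each smash hom set and nothing more, so $\bar H$ exists; uniqueness is immediate since $P$ is surjective on objects and morphisms.

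Finally I would record that the argument uses nullness only through two facts: hom sets acquire canonical zero morphisms, and every morphism to or from a basepoint \emph{is} that zero morphism. In particular it does not invoke Proposition~\ref{coreprops}, although one could alternatively package null based categories as modules over the reflection of the monoidal unit and deduce fullness and the identity $\wedge_A=\wedge$ from that proposition instead. With $Q\cong\C\wedge\D$ in hand, (1) holds because $\star$ is null in $Q$, (2) was already settled on objects, and (3) is the defining formula for the hom sets of $Q$.
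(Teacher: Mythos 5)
Your proposal is correct and follows essentially the same route as the paper: both construct the candidate category with object set $\Ob\C\wedge\Ob\D$ and hom sets $\C(a_1,a_2)\wedge\D(b_1,b_2)$, and verify the universal property by showing that any functor out of $\C\times\D$ that is constant on the wedge is forced, via factorizations through the basepoint objects, to identify all morphisms with a zero component. Your extra touches (the left-adjointness of $\Ob$ for part (2) and the explicit factorization $(f,0)=(f,1_{*_\D})\circ(1_{a_1},0)$) are just more detailed versions of the diagram argument the paper gives.
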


\begin{proof}
We can construct a category $\B$ by $\Ob\B=\Ob\C\wedge\Ob\D$ and
$\B((a_{1},b_{1}),(a_{2},b_{2}))=\C(a_1,a_2)\wedge\D(b_1,b_2)$,
with composition and identities defined by composition and
identities in $\C$ and $\D$.  We then have a canonical functor
$\C\times \D\to \B$ and to see that it satisfies the universal
property defining the smash product the only issue is whether,
given morphisms $\phi\in\C(a_1,a_2)$ and $\psi\in\D(b_1,b_2)$, we
have
$$g(\phi,*)=g(*,*)=g(*,\psi).
$$
Consider the diagram
$$\xymatrix{
g(a_1,b_1)\ar[d]^-{g(\phi,1)}\ar[r]
&g(a_1,*)\ar[d]^-{=}\ar[r]
&g(a_1,b_2)\ar[d]^-{g(\phi,1)}
\\g(a_2,b_1)\ar[r]
&g(a_2,*)\ar[r]
&g(a_2,b_2).}
$$
This shows that $g(\phi,*)$ coincides with the composite
$$g(a_1,b_1)\to g(a_1,*)=g(a_2,*)\to g(a_2,b_2),
$$
which is independent of $\phi$.  Therefore $g(\phi,*)=g(*,*)$. A
similar diagram shows that $g(*,\psi)=g(*,*)$.
\end{proof}

The unit for the smash product in $\Catstar$ is the two object
discrete category $S^{0}$; it has two objects, namely a basepoint $*$
and a non-basepoint $()$, and has only identity morphisms. Note that
the basepoint object of $S^{0}$ is not null.  On the other hand, we
can construct a unit in the full subcategory of based categories with
null basepoint object as follows.

\begin{definition}
Let $e$ be the based category with two
objects, $*$ and $()$, with $*$ a null basepoint object, and with the
set of self maps of $()$ consisting of the null map and the
identity.
\end{definition}

The following theorem is essentially a corollary of Propositions
\ref{coreprops} and \ref{nullsmash}.

\begin{theorem}\label{emod}
The based category $e$ satisfies $e\wedge e\cong
e$, with the isomorphism making $e$ a commutative monoid in
$\Catstar$.  The category of $e$-modules is precisely the full
subcategory of $\Catstar$ of based categories with a null basepoint
object, and the smash product over $e$ is naturally isomorphic to the
smash product in $\Catstar$.
\end{theorem}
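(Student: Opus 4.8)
The plan is to derive all three assertions from Propositions \ref{coreprops} and \ref{nullsmash}, isolating as the only real computation the behaviour of $e\wedge(\blank)$ on based categories that need not have a null basepoint.

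First I would establish that $e\wedge e\cong e$ and that this isomorphism makes $e$ a commutative monoid. Since $e$ has null basepoint, Proposition \ref{nullsmash} computes $e\wedge e$ directly: on objects $\Ob(e\wedge e)\cong\Ob e\wedge\Ob e\cong S^0\wedge S^0\cong S^0\cong\Ob e$, while on the non-basepoint object $x$ the only nontrivial hom-set is $(e\wedge e)(x,x)\cong e((),())\wedge e((),())\cong S^0\wedge S^0\cong S^0$, again the two-element based set whose points are the null map and the identity. Thus $e\wedge e\cong e$, with multiplication $\mu$ sending the non-basepoint object to $()$ and determined on morphisms by the smash-unit isomorphisms for $S^0$; the unit is the evident based functor $\eta\colon S^0\to e$ hitting $()$. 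Because all this data is governed at the level of objects and of the single relevant hom-set by the commutative monoid $S^0$ (the unit of based sets under $\wedge$), the associativity, unit, and commutativity diagrams for $e$ reduce to the trivially valid ones for $S^0$; given the smallness of $e$ one may equally verify them by inspection.

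With $e$ a commutative monoid whose multiplication is an isomorphism, Proposition \ref{coreprops} applies verbatim with $\C=\Catstar$ and $A=e$, and immediately yields three of the conclusions: every $e$-module structure map $\xi\colon e\wedge M\to M$ is an isomorphism, the $e$-modules form a full subcategory of $\Catstar$, and for $e$-modules the canonical map is an isomorphism, so $\wedge_e\cong\wedge$ on this subcategory. What remains is to identify this subcategory with the based categories having null basepoint object. For the easy inclusion, if $\D$ has null basepoint then Proposition \ref{nullsmash} supplies a natural isomorphism $e\wedge\D\cong\D$ (again because $\Ob e\cong S^0$ and $e((),())\cong S^0$ act as the smash unit on objects and on hom-sets); one checks this isomorphism satisfies the module axioms, exhibiting $\D$ as an $e$-module.

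The main obstacle is the reverse inclusion: an $e$-module $M$ must have null basepoint. Since $\xi\colon e\wedge M\to M$ is an isomorphism of based categories by Proposition \ref{coreprops}, it suffices to show the basepoint of $e\wedge M$ is a zero object for \emph{arbitrary} based $M$. On objects this is immediate, as $\Ob(\blank)$ preserves colimits and smash products, so $\Ob(e\wedge M)\cong S^0\wedge\Ob M\cong\Ob M$ with the basepoint $\star$ corresponding to that of $M$; the content is that $\star$ is both initial and terminal, and here Proposition \ref{nullsmash} is unavailable since $M$ need not have null basepoint. I would argue directly from the pushout defining $e\wedge M$: every morphism of $e\times M$ out of a non-basepoint object $((),a)$ into the collapsed subcategory factors through the zero object $*$ of $e$, and all such factorizations become identified in the quotient, so the hom-set from $((),a)$ to $\star$ collapses to a single morphism; the dual argument handles maps out of $\star$. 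This is the one genuinely combinatorial step, and it is exactly where the null basepoint of $e$ — encoded in the factorization $()\to *\to()$ of the null endomorphism — does the work. Once $\star$ is known to be null, $M\cong e\wedge M$ has null basepoint, the two subcategories coincide, and the remaining assertions follow from Proposition \ref{coreprops} as above.
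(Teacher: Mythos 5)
Your overall decomposition matches the paper's: everything except the identification of $e$-modules with null-based categories follows from Propositions \ref{coreprops} and \ref{nullsmash}, the construction of the monoid structure on $e$ is routine, and the forward inclusion (null basepoint implies $e$-module) is easy. The gap is in your treatment of the reverse inclusion. Reducing it to the claim that the basepoint $\star$ of $e\wedge M$ is null for an \emph{arbitrary} based $M$ is legitimate, but your justification of that claim does not hold up. You assert that every morphism of $e\times M$ from $((),a)$ into the collapsed subcategory ``factors through the zero object $*$ of $e$''; this is false for the morphisms $(\id_{()},\beta)\colon((),a)\to((),m_0)$ with $\beta\colon a\to m_0$ in $M$, whose first coordinate is the identity of $()$ and does not factor through the basepoint of $e$. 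Such a morphism does become identified with $(()\to *,\id_a)$ in the pushout, but only via the relation obtained by post-composing with the morphism $(()\to *,\id_{m_0})$ of the collapsed subcategory and re-factoring the resulting composite $(()\to *,\beta)$ through the object $(*,a)$ --- not via a factorization of the original morphism. More seriously, a morphism into $\star$ in a pushout of categories is an equivalence class of composable strings, not merely a single arrow of $e\times M$ with target in the collapsed subcategory; one must also check that all endomorphisms of $\star$ are trivial and that arbitrary strings reduce to the single-arrow case. The claim is true, but as written the step you yourself flag as ``the one genuinely combinatorial step'' is exactly where the proof is not yet a proof.

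The paper avoids computing the pushout entirely. It composes the module structure map with the canonical functor $e\times\C\to e\wedge\C$ to obtain a bifunctor $\xi\colon e\times\C\to\C$, shows $\xi$ is split by $\C\cong\{()\}\times\C\to e\times\C$ using unitality, and then applies $\xi$ to the commutative square built from $*\to()$ in $e$ and an arbitrary $\phi\colon *\to a$ in $\C$ to conclude that $\phi=\xi(*\to(),a)$ is independent of $\phi$, so the basepoint of $\C$ itself is initial (and dually terminal). If you wish to keep your route you must carry out the string calculus for the pushout in full; otherwise the paper's argument reaches the conclusion using nothing about $e\wedge M$ beyond the existence of the canonical functor from $e\times M$.
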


Although the smash products are the same, the units are
different: $e$ is the unit for the category of $e$-modules,
and as mentioned above, $S^{0}$ is the unit for $\Catstar$.

\begin{proof}
The only claim that does not follow immediately from Propositions \ref{coreprops} and
\ref{nullsmash} is the identification of $e$-modules with based
categories having null basepoints.  If $\C$ has a null basepoint,
then it is easy to produce a unique $e$-module structure map.
Conversely, suppose $\C$ is an $e$-module.  Then $\C$ supports a
based functor $\xi:e\times\C\to\C$, which we claim is split
epi. This follows from the fact that the induced map on smash
products $e\wedge\C\to\C$ is unital, using the following diagram:
$$\xymatrix{
S^0\times\C\ar@<0ex>[r]\ar[d]&S^0\wedge\C\ar@{.>}@<-.8ex>[l]\ar[dr]^-{\cong}\ar[d]
\\ e\times\C\ar[r]&e\wedge\C\ar[r]&\C.}
$$
The vertical arrows are induced by the inclusion $S^0\to e$, and
the top rightward arrow is split by observing
$S^0\wedge\C\cong\C\cong\{()\}\times\C$ and including $\{()\}$
into $S^0$. (Here, of course, $\{()\}$ is a one point category
with object $()$.)  Now it follows that the bottom composite,
which is $\xi$, splits, and that the splitting is the composite
$$\C\cong\{()\}\times\C\to e\times\C.
$$

Now suppose $*$ is the basepoint object in $\C$, and let $\phi:*\to a$
be any map from the basepoint.  Then we can consider the morphism
$\xi(*\to(),\phi):\xi(*,*)\to\xi((),a)$.  Since $\xi$ is a
bifunctor, we have the commutative square with both composites
being $\xi(*\to(),\phi)$:
$$\xymatrix@C+20pt{
\xi(*,*)\ar[r]^-{\xi(*\to(),*)}\ar[d]^-{\xi(*,\phi)}
&\xi((),*)\ar[d]^-{\xi((),\phi)}
\\ \xi(*,a)\ar[r]^-{\xi(*\to(),a)}
&\xi((),a).}
$$
But since $\xi$ is based, both of the arrows out of the
top left entry are the identity on $*$, and $\xi((),\phi)=\phi$
and $\xi((),a)=a$ follow from the splitting.  The diagram then
tells us that
$$\phi=\xi((),\phi)=\xi(*\to(),a),
$$
which is independent of $\phi$. Therefore the only morphism from
$*$ to $a$ is $\xi(*\to(),a)$, so $*$ is initial.  Similarly, $*$
is terminal, and therefore null.
\end{proof}

We are now ready to describe our indexing based category $\Gstar$;
it is constructed as a Grothendieck construction in the category
of $e$-modules.  Since this may be somewhat exotic for some
readers, we provide the details.

Let $\F$ be the
skeleton of the category of finite based sets consisting of the
objects ${\b n}=\{0,1,\dotsc,n\}$ with basepoint 0. We observe that $\F$ is a category with a null basepoint,
and therefore an $e$-module.  We write $\F^{(r)}$ for the $r$-th
smash power of $\F$ as an $e$-module, so in particular
$\F^{(0)}=e$.  (All other smash powers are formed in $\Catstar$,
by Theorem~\ref{emod}.)  We write objects of
$\F^{(r)}$ as $\br{\b m}=(\b m_1,\dots,\b m_r)$, with the understanding that $\br{\b
m}=*$, the basepoint, if any ${\b m}_i=0$.

Now let $\Inj$ be the category
with objects the unbased sets $\u r=\{1,\dotsc,r\}$ for
$r=0,1,2,3,\ldots$, and morphisms the injections. The categories
$\F^{(r)}$ are the target objects of a functor
\[
\F^{(-)}:\Inj\to e\text{-\bf{mod}}
\]
taking $\u r$ to $\F^{(r)}$ on objects, so in particular $\u 0$ gets sent to $\F^{(0)}=e$. On morphisms, $\F^{(-)}$
rearranges the coordinates according to the given injection and,
most crucially, inserts the object $\b 1$ in the slots that are
missed: the intuition is that the objects of $\F^{(r)}$ are lists of objects of $\F$ that are waiting to be smashed together, and the injections merely rearrange the lists without affecting the size of the smash product.  Formally, if we are given an injection $q:\u r\to\u s$,
then $\F^{(q)}$ is the functor from $\F^{(r)}$ to $\F^{(s)}$ that takes a non-basepoint
object $\br{\b m}=(\b m_1,\dotsc,\b m_r)$ to the $s$-tuple
$q_*\br{\b m}=(\b m'_1,\dotsc,\b m'_s)$ in which
$$\b m'_j=
\begin{cases}\b m_i&\text{if $q^{-1}(j)=\{i\}$}\\
\b 1&\text{if $q^{-1}(j)=\emptyset$,}
\end{cases}
$$
and takes a morphism $(\alpha_1,\dotsc,\alpha_r)$ to the $s$-tuple
$(\alpha'_1,\dotsc,\alpha'_s)$ where
$$\alpha'_j=
\begin{cases} \alpha_i&\text{if $q^{-1}(j)=\{i\}$}\\
\id_{\b 1}&\text{if $q^{-1}(j)=\emptyset$.}
\end{cases}
$$
 in particular,
the object $()$ of $\F^{(0)}$ gets sent to the constant string
$({\b1},\dotsc,{\b1})$. We define $\Gstar$ to be the Grothendieck construction $\Inj\int \F^{(-)}$ formed in $e$-modules: this is formally the same as the ordinary Grothendieck construction, except we use the coproduct in $e$-modules (which is a wedge) instead of the coproduct in $\Cat$ (which is a disjoint union), and we use the smash product of based sets and categories instead of the Cartesian product.  Specifically,
the set of objects of $\Gstar$ is
$$ \bigvee_{\u r\in \obj{\Inj}} \obj{\F^{(r)}} $$
and the maps from $\br{\b m}$ to $\br{\b n}$ in $\Gstar$ form the
based set
\[
\bigvee_{q\colon \u r \to \u s}
\left(\bigwedge_{j=1}^s \F(\b m_{q^{-1}(j)},\b n_j)\right),
\]
where we agree that if $q^{-1}(j)=\emptyset$, then
$\b m_{q^{-1}(j)}=\b 1$.
The empty wedge
is of course the one point set, and the empty smash is $\b 1$.
Note that the basepoint object $*$ of $\Gstar$ is a null object,
and the basepoint in each mapping set is the unique map that
factors through $*$.

For readers of \cite{RMA}, we note that this is a based version of
the category $\G$ introduced there, and that we have the following
relation between $\G$ and $\Gstar$.  First, there is a canonical
functor $\G\to \Gstar$.  More specifically, we can identify the
category $\Gstar$ as the category obtained from $\G$ by attaching
a new null object $*$ and identifying $\br{\b m}$ with $*$
whenever any ${\b m}_{i}=\b0$.  In particular, every map in
$\Gstar(\br{\b m},\br{\b n})$ is either the trivial morphism
(factoring through $*$) or in the image of $\G(\br{\b m},\br{\b
n})$.  Whenever none of the entries in $\br{\b m}$ or $\br{\b n}$
are $\b0$, the function $\G(\br{\b m},\br{\b n})\to
\Gstar(\br{\b m},\br{\b n})$ is in fact one-to-one onto the subset
of $\Gstar(\br{\b m},\br{\b n})$ that excludes the trivial
morphism.

In order to avoid possible
confusion as to the meaning of ``functor'' and ``natural
transformation'' where they occur below, we define
$\Gstar$-objects in any category $\ACat$ with a final object, which we
always denote as $*$.
Let $\ACat_*$ be the category of objects under $*$, so $\ACat_*$ has
$*$ as a null object. A
\emph{based functor}\/ $\Gstar\to \ACat_{*}$ is a functor that
takes the null object $*$ of $\Gstar$ to the null object $*$ of
$\ACat_{*}$.  Our intermediate category $\GstarCat$ is a special case of the following definition:

\begin{definition}
The category $\GC$ is the category of based functors
$\Gstar\to \ACat_{*}$.
\end{definition}

Concatenation of lists makes $\Gstar$ into a permutative category,
where concatenation with $*$ on either side yields $*$; we denote
this operation by $\concat$. It follows from theorems of Day
(\cite{Day}, Theorems 3.3 and 3.6) that when $\ACat_{*}$ is a
bicomplete closed
symmetric monoidal category, the category of based
functors from $\Gstar$ to $\ACat_*$ has a closed symmetric
monoidal structure, enriched over $\ACat_*$, in which the product
of functors $F_{1}$ and $F_{2}$ is given by the left Kan extension
$F_{1}\sma F_{2}$ in the diagram on the left below.   The
universal property of the Kan extension is that maps from
$F_{1}\sma F_{2}$ to $G$ are in one-to-one correspondence with
natural transformations $f$ as  in the diagram on the right below:
$$\xymatrix{\Gstar\times\Gstar\ar[r]^-{F_{1}\times F_{2}}\ar[d]_-{\concat}
&\ACat_*\times\ACat_*\ar[r]^-{\sma}
&\ACat_*
\\ \Gstar\ar@{-->}[urr]_-{F_{1}\sma F_{2}}}
\qquad
\xymatrix@C=40pt{
\Gstar\times\Gstar \ar[r]^-{F_1\times F_2}\ar[d]_-{\concat}
\drtwocell<\omit>{f}
&\ACat_*\times\ACat_{*}\ar[d]^-{\sma}
\\ \Gstar\ar[r]_-{G}
&\ACat_*.}
$$
This then gives us the following theorem.

\begin{theorem}
Let $\ACat$ be a bicomplete closed symmetric monoidal category.  Then $\GC$
is a closed symmetric monoidal category enriched over $\ACat_{*}$.
\end{theorem}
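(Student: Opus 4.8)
The plan is to realize $\GC$ as a based analogue of a Day convolution functor category and to import its closed symmetric monoidal structure directly from the theorems of Day cited above, once the two hypotheses are in place: the target must be a bicomplete closed symmetric monoidal category, and the indexing category must carry a symmetric monoidal structure compatible with the based structure. Everything else is coend calculus that Day's theorems already package for us.

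First I would record that $\ACat_{*}$, the category of objects under the terminal object $*$ of $\ACat$, is itself bicomplete, closed, and symmetric monoidal. This is exactly Lemma~\ref{SmashMonoidal} applied to $\CC=\ACat$: the smash product $\wedge$ of Construction~\ref{smash} supplies the monoidal structure, the based hom $\hom_{*}$ supplies the internal hom, and $\ACat_{*}$ is bicomplete as a slice category of $\ACat$. Second I would invoke the fact, already observed in the text, that concatenation $\concat$ makes $\Gstar$ a permutative, hence symmetric monoidal, category in which $*$ is an annihilator, $\concat$ with $*$ on either side returning $*$. Since the hom-sets of $\Gstar$ are based and $*$ is null, $\Gstar$ is a symmetric monoidal category internal to based sets, and the annihilator property says precisely that $\concat$ descends to a based bifunctor $\Gstar\wedge\Gstar\to\Gstar$. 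Note also that $\Gstar$ is small, so that, $\ACat_{*}$ being cocomplete, the left Kan extensions and coends below all exist.

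With these facts, Day's Theorems~3.3 and~3.6 apply to the based functor category $\GC$, with convolution product the left Kan extension $F_{1}\wedge F_{2}$ along $\concat$ displayed above. The universal property of this Kan extension is exactly an adjunction, so the convolution is closed, the internal hom being computed as the end over $\Gstar$ of the based hom objects $\hom_{*}$; associativity, symmetry, and the unit (the based functor corepresented by the monoidal unit of $\Gstar$) then follow from the standard coend calculus of Day, and the enrichment over $\ACat_{*}$ is the $\ACat_{*}$-object of based natural transformations, again assembled as an end of the $\hom_{*}$.

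The step I expect to be the main obstacle is the bookkeeping that keeps everything \emph{based}. Day's theorems are most naturally phrased for ordinary enriched functors, so I must check that the construction descends to functors preserving the null object and delivers an enrichment over $\ACat_{*}$ rather than merely over $\ACat$. The observation that makes this work is the compatibility between the annihilator property of $*$ for $\concat$ in $\Gstar$ and the collapsing property of the smash product in $\ACat_{*}$: whenever an input to the convolution coend involves the basepoint of $\Gstar$, the annihilator property routes it into a smash factor of the form $*\wedge(\blank)$ in $\ACat_{*}$, which is the basepoint. Hence the coend defining $F_{1}\wedge F_{2}$ automatically sends $*$ to $*$, so the convolution of based functors is again based, and the same collapsing shows that the internal hom and the $\ACat_{*}$-enrichment are well defined as based objects. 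Once this compatibility is verified, the closed symmetric monoidal structure on $\GC$ and its enrichment over $\ACat_{*}$ are exactly what Day's theorems produce.
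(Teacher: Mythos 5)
Your proposal matches the paper's own argument: the paper likewise deduces the theorem directly from Day's Theorems 3.3 and 3.6, applied to the convolution product given by left Kan extension along $\concat$ on based functors from the permutative category $\Gstar$ to $\ACat_*$, with $\ACat_*$ bicomplete closed symmetric monoidal via Lemma~\ref{SmashMonoidal}. Your additional verification that the convolution, internal hom, and enrichment stay based only makes explicit what the paper leaves implicit.
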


We are now ready to build our representable lax symmetric monoidal
functor $\Jhat$ from $\Multstar$ to $\GstarCat$, and we do so by
producing its representing object $\Estar$.  The correct formal
structure $\Estar$ must exhibit is firstly that of a based
$\Gstar^\op$-multicategory, that is, a based contravariant functor
from $\Gstar$ to $\Multstar$.  Since $\Multstar$ is symmetric
monoidal closed, and therefore enriched over itself, the lax
symmetric monoidal forgetful functor $\Multstar\to\Catstar$ gives
$\Multstar$ an enrichment over $\Catstar$, and therefore
$$\Multstar(\Estar,M)
$$
the structure of a $\Gstar$-category for any based multicategory
$M$.  Secondly, $\Estar$ needs additional structure to ensure that
the functor it represents is lax symmetric monoidal; we will address this
issue as well.

Our construction of $\Estar$ is based on that of a very small
based multicategory $E$ with excellent formal properties
reminiscent of the based category $e$.

\begin{definition} The multicategory $E$ has two objects, 0 and 1,
with morphisms given by
$$E(a_1,\dots,a_k;a')=
\begin{cases}*&\text{if $a_1+\dots+a_k=a'$}
\\ \emptyset&\text{otherwise,}
\end{cases}
$$
so in particular there are no morphisms when there is more than
one input with value 1. The object 0 is the basepoint object, given by
the unique multifunctor $*\to E$.
\end{definition}

We remark that $E$ is the terminal parameter multicategory for
modules (\cite{RMA}, definition 2.4), so a multifunctor from $E$
gives the image of 0 the structure of a commutative monoid, and
the image of 1 the structure of a module over this monoid.  In the
case where the target is a based multicategory, we already have a
selected commutative monoid structure on the basepoint object, and a
based multifunctor from $E$ is then the choice of a module
structure over this commutative monoid.

The formal properties we need for $E$ are the following.

\begin{theorem}\label{Emods}
The based multicategory $E$ satisfies $E\wedge E\cong E$, with the
isomorphism making $E$ a commutative monoid in $\Multstar$.  The
category of $E$-modules is therefore a full subcategory of
$\Multstar$, and has the same smash product as in $\Multstar$.
\end{theorem}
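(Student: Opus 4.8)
The plan is to derive everything except the idempotence $E\wedge E\cong E$ from Proposition~\ref{coreprops}. By Corollary~\ref{SMC}, $(\Multstar,\wedge)$ is a symmetric monoidal closed bicomplete category, so once we exhibit an isomorphism $\mu\colon E\wedge E\to E$ making $E$ a commutative monoid, Proposition~\ref{coreprops} applies with $\C=\Multstar$ and $A=E$. It then tells us that the structure map of every left (equivalently, since $E$ is commutative, every) $E$-module is an isomorphism, that the $E$-modules form a full subcategory of $\Multstar$, and that for modules $M$ and $N$ the canonical map $N\wedge M\to N\wedge_E M$ is an isomorphism, so that the smash over $E$ agrees with the smash in $\Multstar$ (the coequalizer defining $\wedge_E$ existing by bicompleteness). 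Thus the whole theorem reduces to producing the commutative monoid structure on $E$.

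For the isomorphism, rather than compute $E\wedge E$ from Construction~\ref{tensorconstruction} directly, I would verify its universal property by a Yoneda argument. Since $\wedge$ represents based bilinear maps, it suffices to produce a bijection, natural in the based multicategory $P$,
$$\Bilin_*(E,E;P)\cong\Multstar(E,P),$$
and then appeal to Yoneda. The right-hand side is easy to describe: as remarked after the definition of $E$, a based multifunctor $E\to P$ is exactly the choice of an object $p=f(1)$ of $P$ together with a module structure on $p$ over the commutative monoid structure carried by the basepoint object $*_P$. On the left, a based bilinear map $f$ is determined by the object $p=f(1,1)$ (all other object values being forced to $*_P$), together with the two multifunctors $f(1,\blank)$ and $f(\blank,1)$ from $E$ to $P$; each of these equips $p$ with a module structure over $*_P$, say with action maps $\lambda_2$ and $\lambda_1$ respectively.

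The one substantive point --- and the step I expect to be the main obstacle --- is that these two module structures necessarily coincide, so that a based bilinear map carries no more information than a single based multifunctor $E\to P$. This I would extract from the bilinearity square of Definition~\ref{bilin}(3) applied to the action morphism of $E$ in each variable: that square yields the identity $\lambda_2\circ(\mu_{*_P},\lambda_1)=\lambda_1\circ(\mu_{*_P},\lambda_2)$, where $\mu_{*_P}$ is the multiplication of the basepoint monoid, and precomposing twice with the monoid unit $\eta_{*_P}$ and using the unit law for a module collapses the two sides to $\lambda_1(*_P,p)$ and $\lambda_2(*_P,p)$, forcing $\lambda_1=\lambda_2$. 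The higher action maps are generated from $\lambda$ together with $\mu_{*_P}$, so the two module structures then agree outright. Because every morphism set of $E$ is a single point or empty, the remaining compatibilities (that such data really assemble into a bilinear map, and naturality in $P$) are automatic once sources and targets match, so the displayed bijection follows.

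Finally, the bijection identifies the universal bilinear map $(E,E)\to E\wedge E\cong E$ with the \emph{multiplication} sending $(a,b)$ to $ab$, acting as the identity of $E$ in either variable when the other entry is $1$ and landing in the basepoint when the other entry is $0$; this is the map $\mu$. Its unit is the based multifunctor from the unit object of $\wedge$ selecting the object $1$, and commutativity and associativity of $\mu$, together with the unit laws, are again forced by the singleton/empty morphism sets of $E$, exactly as for $e$ in Theorem~\ref{emod}. With $\mu$ in hand as the structure map of a commutative monoid, Proposition~\ref{coreprops} finishes the proof.
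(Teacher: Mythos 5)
Your proposal is correct and follows essentially the same route as the paper: the theorem is reduced to Proposition~\ref{coreprops}, and the commutative monoid structure is obtained by showing that every based bilinear map out of $(E,E)$ carries no more data than a single based multifunctor out of $E$ --- your key step that the two partial module structures $\lambda_1$ and $\lambda_2$ coincide is exactly the paper's Lemma~\ref{balanced} ($f(a,\phi)=f(\phi,a)$), proved by the same computation with the bilinearity square for $\phi_2$ precomposed with unit morphisms, followed by the same induction to the higher $\phi_n$. The only cosmetic difference is that you phrase the universality via a natural bijection $\Bilin_*(E,E;P)\cong\Multstar(E,P)$ and Yoneda, where the paper directly exhibits the bilinear map $\beta:(E,E)\to E$ and checks it is universal.
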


The Theorem is a special case of Proposition \ref{coreprops}, so
we need only show that $E$ satisfies the hypotheses of Proposition
\ref{coreprops}. We begin with the following lemma.

\begin{lemma}\label{balanced}
Let $M$ be a based multicategory, $f:(E,E)\to M$ a based bilinear
map.  Then for any object $a$ of $E$ and morphism $\phi$ of $E$,
$$f(a,\phi)=f(\phi,a).
$$
\end{lemma}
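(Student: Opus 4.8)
For the whole argument I will exploit how rigid $E$ is: every hom-set is empty or a single point, and no source has more than one entry equal to $1$. Write $x:=f(1,1)$, write $*$ for the basepoint object of $M$, and write $u\colon()\to *$ and $\mathbf{0}_2\colon(*,*)\to *$ for the unit and multiplication of the commutative monoid structure that $*$ carries as the image of the terminal multicategory. Recall that any two $k$-morphisms of $M$ coming from the basepoint coincide, since each is the image of the unique $k$-morphism of $*$. The easy cases are immediate. If $a=0$ then, as $0$ is the basepoint object of $E$, based bilinearity forces both $f(0,\phi)$ and $f(\phi,0)$ to come from the basepoint, hence to agree. Likewise the only morphisms of $E$ with target $0$ are the all-zero morphisms $(0,\dots,0)\to 0$, which are exactly those coming from the basepoint of $E$; for such a $\phi$ both $f(a,\phi)$ and $f(\phi,a)$ come from the basepoint for \emph{any} $a$. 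This leaves $a=1$ together with $\phi$ a morphism having a single $1$ in its source and target $1$.

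I would next reduce the arity of $\phi$ by induction. Since $f(1,\blank)$ and $f(\blank,1)$ are multifunctors they are $\Sigma_k$-equivariant, so I may assume the distinguished $1$ occupies the first slot; then $\phi\colon(1,0,\dots,0)\to 1$ factors in $E$ as $\Gamma(\chi;\psi,1_0)$, where $\chi\colon(1,0)\to 1$, where $\psi\colon(1,0,\dots,0)\to 1$ has arity one less, and $1_0$ is the identity on $0$. Applying the two multifunctors turns this into
$$f(1,\phi)=\Gamma(f(1,\chi);f(1,\psi),1_*),\qquad f(\phi,1)=\Gamma(f(\chi,1);f(\psi,1),1_*),$$
and the inductive hypothesis gives $f(1,\psi)=f(\psi,1)$. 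Thus everything reduces to the single base case $f(1,\chi)=f(\chi,1)$ for $\chi\colon(1,0)\to 1$.

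For the base case set $\alpha:=f(1,\chi)$ and $\beta:=f(\chi,1)$, both morphisms $(x,*)\to x$, and feed $\phi=\psi=\chi$ into the bilinearity square of Definition \ref{bilin}. Evaluating the four corners, and using that $f$ sends any datum involving $0$ to the basepoint (so $f(\chi,0)=f(0,\chi)=\mathbf{0}_2$), collapses the square to the relation
$$\Gamma(\alpha;\beta,\mathbf{0}_2)=\Gamma(\beta;\alpha,\mathbf{0}_2)\cdot\sigma,$$
an equality of $4$-morphisms $(x,*,*,*)\to x$, where $\sigma$ transposes the two middle inputs. The decisive move is then to cap the two \emph{middle} input slots with the unit $u$, i.e.\ to compose each side with $1_x,u,u,1_*$. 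On the left the slot-$2$ cap turns $\beta$ into $\Gamma(\beta;1_x,u)=1_x$ and the slot-$3$ cap turns $\mathbf{0}_2$ into $\Gamma(\mathbf{0}_2;u,1_*)=1_*$, so the left side reduces to $\Gamma(\alpha;1_x,1_*)=\alpha$; on the right those very same two slots instead cap one input of $\mathbf{0}_2$ and the $*$-slot of $\alpha$, reducing the right side to $\beta$. The unitality identities $\Gamma(\alpha;1_x,u)=\Gamma(\beta;1_x,u)=1_x$ come from capping the $0$-slot of $\chi$ by the basepoint $0$-morphism of $E$ (where $\chi$ reduces to $1_1$) and pushing through the multifunctors, while the unit laws for $\mathbf{0}_2$ come from the monoid structure on $*$. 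Equating the two reductions gives $\alpha=\beta$.

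The main obstacle is precisely this final step. The bilinearity relation is symmetric under interchanging the two variables, so any \emph{symmetric} simplification — capping all the $*$-inputs, or capping the second slot of the outer map on both sides — reduces both sides to $1_x$ and yields no information; indeed my first several attempts at unit-plugging all degenerate this way. The content is the observation that capping the two \emph{middle} slots is asymmetric with respect to the two sides, annihilating $\beta$ on the left but $\alpha$ on the right, which is what lets one read off $\alpha=\beta$. Getting the bookkeeping of $\sigma$ correct, so that the two capped slots really are the $*$-slot of the inner map on one side and a monoid input on the other (and so that the two surviving strands, of distinct types $x$ and $*$, admit no residual reshuffling), is the delicate part of the verification.
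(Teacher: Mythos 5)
Your proof is correct and takes essentially the same route as the paper's: both reduce the general case to the binary morphism via basedness, permutations, and multifunctoriality in each variable, and both extract the key identity from the bilinearity square for the pair $(\chi,\chi)$ by plugging the basepoint $0$-morphisms into the two mixed slots $f(0,1)$ and $f(1,0)$ while keeping identities on $f(0,0)$ and $f(1,1)$. The only differences are cosmetic (orienting the source as $(1,0)$ rather than $(0,1)$, and peeling off the zeros one at a time instead of all at once via $\e_{n-1}$).
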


\begin{proof}
If  $a=0$ or $\phi$ is in the image of $*\to E$, the lemma follows
from $f$ being based.  We consider next the first case in which
neither is true.  Let $\phi_2\in E_2(0,1;1)$ be the unique
element; we wish to show that
$$f(1,\phi_2)=f(\phi_2,1).
$$
The key to the argument is to observe that bilinearity means in
particular that the following diagram commutes:
$$\xymatrix@C+40pt{
(f(0,0),f(0,1),f(1,0),f(1,1))\ar[r]^-{(f(0,\phi_2),f(1,\phi_2))}
\ar[d]^-{\cong}
&(f(0,1),f(1,1)) \ar[dd]^-{f(\phi_2,1)}
\\(f(0,0),f(1,0),f(0,1),f(1,1))\ar[d]^-{(f(\phi_2,0),f(\phi_2,1))}
\\(f(1,0),f(1,1))\ar[r]^-{f(1,\phi_2)}
&f(1,1).}
$$

We now precompose with the ordered quadruple of morphisms
$(\id_{f(0,0)},\e_0,\e_0,\id_{f(1,1)})$, where $\e_0$ is the image
in $M$ of the canonical $0$-morphism in $*$. Notice that
$f(0,0)=f(0,1)=f(1,0)$ since all must be the basepoint of $M$, and
that the composite
$$\xymatrix@C+20pt{
f(0,0)\ar[r]^-{(\id_{f(0,0)},\e_0)}
&(f(0,0),f(1,0))\ar[r]^-{f(\phi_2,0)} &f(1,0),}
$$
being in the image of $*\to M$, must be the identity $1$-morphism.
Similarly, the composite
$$\xymatrix@C+20pt{
f(0,0)\ar[r]^-{(\id_{f(0,0)},\e_0)} &(f(0,0),f(0,1))
\ar[r]^-{f(0,\phi_2)} &f(0,1)}
$$
is the identity.  Furthermore, the composite
$$\xymatrix@C+20pt{
f(1,1)\ar[r]^-{(\e_0,\id_{f(1,1)})}
 &(f(1,0),f(1,1))\ar[r]^-{f(1,\phi_2)}
 &f(1,1)}
$$
arises from applying the based multifunctor $f(1,\blank)$ to the
composite
$$\xymatrix{
1\ar[r]^-{(\e_0,\id)}
 &(0,1)\ar[r]^-{\phi_2}
 &1,}
$$
in $E$, which is $\id_1$, and therefore the previous composite is
$\id_{f(1,1)}$.  Similarly, the composite
$$\xymatrix@C+20pt{
f(1,1)\ar[r]^-{(\e_0,\id)}
 &(f(0,1),f(1,1))\ar[r]^-{f(\phi_2,1)}
 &f(1,1)}
$$
is also $\id_{f(1,1)}$.  We now have the total diagram
$$\xymatrix{
(f(0,0),f(1,1))\ar[r]^-{=}\ar[d]^-{=}
 &(f(0,1),f(1,1))\ar[d]^-{f(\phi_2,1)}
 \\(f(1,0),f(1,1))\ar[r]^-{f(1,\phi_2)}
 &f(1,1)}
 $$
 which establishes the claim.

 We next consider the unique element $\phi_n\in E_n(0^{n-1},1;1)$,
 and claim that
 $$f(1,\phi_n)=f(\phi_n,1).
 $$
 This follows by induction from the case $n=2$ by use of the
 multifunctoriality of $f(1,\blank)$ and $f(\blank,1)$, together
 with the formula
 $$\phi_n=\Gamma(\phi_2;\e_{n-1},\phi_2),
 $$
 where $\e_{n-1}$ is the canonical $(n-1)$-morphism $0^{n-1}\to 0$.
 The general case now follows, since all morphisms in $E$ are
 either part of the basepoint structure or else arise from a
 permutation action on one of the $\phi_n$'s.
\end{proof}

\begin{corollary}\label{Ecore}
There is a natural isomorphism $E\wedge E\cong E$ which is the
product map for a commutative monoid structure on $E$ in
$\Multstar$.
\end{corollary}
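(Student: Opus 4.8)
My plan is to prove the corollary by the Yoneda Lemma, exactly as the associativity of $\wedge$ was established in the proof of Lemma~\ref{SmashMonoidal}. Because the smash product on $\Multstar$ is the universal based bilinear target (Corollary~\ref{SMC} together with the identification of its underlying multicategory from Section~\ref{mult}), for every based multicategory $M$ the set $\Multstar(E\wedge E,M)$ is naturally identified with the set of based bilinear maps $(E,E)\to M$. It therefore suffices to produce a bijection, natural in $M$, between based bilinear maps $f\colon(E,E)\to M$ and based multifunctors $g\colon E\to M$; the Yoneda Lemma then furnishes a unique isomorphism $\mu\colon E\wedge E\to E$ inducing it, and the monoid structure can be read off from the same identification.

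The bijection sends a based bilinear $f$ to its restriction $g=f(1,\blank)\colon E\to M$, which is a based multifunctor by condition~(1) of Definition~\ref{bilin}. To see this is a bijection I would argue that $f$ carries no more data than $g$. Basedness forces $f(0,b)$, $f(a,0)$, and every $f(\phi,b)$ or $f(a,\psi)$ with a $0$ entry into the basepoint, so the only nontrivial values are $f(1,1)$ and the morphisms $f(\phi_n,1)$, $f(1,\phi_n)$ on the generators $\phi_n\in E_n(0^{n-1},1;1)$, every other morphism of $E$ arising from these by permutations and basepoint structure. Lemma~\ref{balanced} says precisely that $f(1,\phi)=f(\phi,1)$, so the second variable contributes nothing independent and $f$ is completely recovered from $g$. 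Conversely, given a based multifunctor $g$, I set $f(1,1)=g(1)$ and $f(\phi,1)=f(1,\phi)=g(\phi)$ and check that the resulting $f$ is a based bilinear map: multifunctoriality in each variable separately is immediate from that of $g$, and the bilinearity coherence diagram of Definition~\ref{bilin}(3) reduces, after inserting $f(\phi,1)=f(1,\phi)$, to an identity already guaranteed by $g$ being a multifunctor. Verifying that this reconstructed $f$ genuinely satisfies the coherence diagram is the one step that needs care, and it is the main obstacle; everything else is bookkeeping, and the naturality in $M$ is visible from the construction.

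With the natural bijection in hand, the Yoneda Lemma produces the isomorphism $\mu\colon E\wedge E\to E$; concretely it is induced by the based bilinear multiplication $m(a,b)=ab$ with $m(\phi,1)=m(1,\phi)=\phi$, and $\mu$ is the element sent to $\id_E$ under the identification above. Commutativity is then immediate: the symmetry $\gamma$ of $\wedge$ corresponds under the identification to interchanging the two variables of a based bilinear map, sending $m$ to $(a,b)\mapsto m(b,a)$, whose restriction to the first variable $1$ is $m(\blank,1)$. Since Lemma~\ref{balanced} gives $m(\blank,1)=m(1,\blank)$, both $\mu$ and $\mu\circ\gamma$ are sent to $\id_E$ and hence coincide.

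For the unit and associativity I would iterate the identification. The same argument shows $\Multstar(E^{\wedge n},M)\cong\Multstar(E,M)$ naturally in $M$, the iterated balancing again collapsing the several variables to the single generator. Under this identification both associativity composites $E\wedge E\wedge E\to E$ are sent to $\id_E$, and the two unit composites built from the evident unit $\eta\colon\mathbf 1\to E$ selecting the object $1$ are likewise sent to the identity, so Yoneda forces the required equalities. This establishes the commutative monoid structure on $E$ with $\mu$ an isomorphism, which is exactly the hypothesis of Proposition~\ref{coreprops} needed to deduce Theorem~\ref{Emods}.
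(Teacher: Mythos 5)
Your proposal is correct and is essentially the paper's own argument in Yoneda clothing: both proofs rest entirely on Lemma~\ref{balanced} to show that a based bilinear map out of $(E,E)$ carries no more data than its restriction $f(1,\blank)$, which is exactly the statement that the evident map $\beta\colon(E,E)\to E$ has the universal property of the smash product, and hence that $E\wedge E\cong E$. The only cosmetic difference is in the monoid axioms, where the paper invokes the fact that $E$ has no nontrivial automorphisms while you track identities through the natural bijection $\Multstar(E^{\wedge n},M)\cong\Multstar(E,M)$; and your acknowledged remaining check (that the reconstructed $f$ satisfies the bilinearity coherence diagram) is glossed at the same level in the paper, which simply calls $\beta$ ``the obvious based bilinear map.''
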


\begin{proof}
The isomorphism is induced from the obvious based bilinear map
$$\beta:(E,E)\to E
$$
sending $(1,1)$ to 1 and both $(1,\phi)$ and $(\phi,1)$ to $\phi$
for any morphism $\phi$.  If $f:(E,E)\to M$ is any other based
bilinear map, then Lemma \ref{balanced} shows that $f$ factors
uniquely through $\beta$, giving $\beta$ the universal property of
the map to the smash product.  The isomorphism now follows from
the uniqueness of universal objects.  The axioms for the
commutative monoid structure are trivial to verify, and follow
from the fact that $E$ has no nontrivial automorphisms.
\end{proof}

The proof of Theorem \ref{Emods} now consists of Proposition
\ref{coreprops} applied to Corollary \ref{Ecore}.

Note in particular that the unit for the smash product of based
multicategories, which we will call $u$, is the coproduct
$U\coprod*$, where $*$ is the terminal multicategory and $U$ is
the unit for the tensor product in $\Mult$, which has one object
and only its identity morphism. We will think of $u$ as having two
objects, 0 and 1, with 0 the basepoint object, and with the only
morphism involving 1 being $\id_1$.  It is now clear what the unit
map $u\to E$ is.

The formal properties of our representing object $\Estar$, which
is still to be defined, rely on those of the Cartesian power
multicategories $E^n$, for which we first need some notation. The
multicategories $E^n$ are the powers using the Cartesian product
of multicategories, which provides the categorical product in both
the based and the unbased settings. It is formed using the
Cartesian product of sets on both objects and $k$-morphisms for
each $k$.  We will find it convenient to think of an object of
$E^n$, which is merely a string of 0's and 1's of length $n$, as
being given by the subset $T\subset\{1,\dots,n\}$ of indices at
which the string takes on the value 1.  With this in mind, it is
easy to verify the following proposition.

\begin{proposition}  Given objects $T_1,\dots,T_k$ and $T'$ of
$E^n$, the set of $k$-morphisms $E^n(T_1,\dots,T_k;T')$ is empty
unless the $T_i$'s are mutually disjoint and $T_1\cup\dots\cup
T_k=T'$, in which case it consists of a single $k$-morphism.
\end{proposition}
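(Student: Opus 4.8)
The plan is to exploit the fact that the Cartesian power $E^n$ is formed coordinatewise: by the definition of the Cartesian product of multicategories (taken on $k$-morphisms for each $k$), the set $E^n(T_1,\dots,T_k;T')$ is the product over the $n$ coordinates of the corresponding $k$-morphism sets in $E$. So the whole statement reduces to analyzing a single coordinate and then taking the product over all of them.

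First I would translate the subset notation back into strings of $0$'s and $1$'s, so that in coordinate $\ell\in\{1,\dots,n\}$ the object $T_i$ contributes the value $1$ if $\ell\in T_i$ and $0$ otherwise, and likewise $T'$ contributes $1$ if $\ell\in T'$ and $0$ otherwise. The $\ell$-th factor of $E^n(T_1,\dots,T_k;T')$ is then exactly $E(v_1,\dots,v_k;v')$, where $v_i$ is the value of $T_i$ in coordinate $\ell$ and $v'$ is the value of $T'$. By the definition of $E$, this factor is the one-point set $*$ if $v_1+\dots+v_k=v'$ and is empty otherwise.

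Next I would interpret the arithmetic condition $v_1+\dots+v_k=v'$ set-theoretically. Since each $v_i\in\{0,1\}$, the sum $v_1+\dots+v_k$ counts how many of the subsets $T_1,\dots,T_k$ contain $\ell$, while $v'\in\{0,1\}$. Thus the coordinate-$\ell$ factor is nonempty precisely when either $\ell\notin T'$ and $\ell$ lies in none of the $T_i$, or $\ell\in T'$ and $\ell$ lies in exactly one of the $T_i$. Quantifying over all coordinates $\ell$, these conditions hold simultaneously if and only if the $T_i$ are mutually disjoint and $T_1\cup\dots\cup T_k=T'$; otherwise at least one factor is empty and hence so is the whole product.

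Finally, when the disjoint-union condition holds every coordinate factor is the one-point set $*$, so their product is again a one-point set, yielding the single $k$-morphism asserted. The only point requiring care is the bookkeeping in the middle step—matching the numerical equality $v_1+\dots+v_k=v'$ in each coordinate against the combinatorial disjointness-and-union statement, and checking both the nonempty and the empty cases—but this is entirely routine, which is why the proposition is flagged as easy to verify.
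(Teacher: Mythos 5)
Your proof is correct, and it fills in exactly the routine verification the paper leaves to the reader (the paper offers no proof beyond "it is easy to verify," having just noted that $E^n$ is formed using the Cartesian product of sets on $k$-morphisms and that objects are encoded by the subsets of indices where the string is $1$). The coordinatewise reduction to the condition $v_1+\dots+v_k=v'$ in each factor of $E$, followed by the translation into disjointness and union, is precisely the intended argument.
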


Our next step in deriving the formal properties of $E^*$ is the
following structure theorem about cartesian powers of $E$.

\begin{theorem}\label{Emod}
The cartesian powers $E^m$ are modules over the commutative monoid
$E$ in $\Multstar$.
\end{theorem}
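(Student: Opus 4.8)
The plan is to exhibit an explicit $E$-module structure map on $E^m$ and then let Theorem~\ref{Emods} (equivalently Proposition~\ref{coreprops}) supply the remaining formal properties for free. A map $E\wedge E^m\to E^m$ is the same as a based bilinear map $(E,E^m)\to E^m$, by the universal property of the smash product in $\Multstar$ (the based analogue of the universal bilinear property of Construction~\ref{tensorconstruction}), so I would first construct such a bilinear map $\xi$. Writing an object of $E^m$ as the subset $T\subseteq\{1,\dots,m\}$ on which the string takes the value $1$, with basepoint $\emptyset$, I set on objects $\xi(1,T)=T$ and send everything in the image of a basepoint to the basepoint, i.e.\ $\xi(0,T)=\emptyset$ and $\xi(a,\emptyset)=\emptyset$. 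This encodes the action in which the unit object $1$ of $E$ acts as the identity.

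The crucial simplification is the preceding proposition: each set $E^m(T_1,\dots,T_k;T')$ has \emph{at most one} element. Consequently, once the object map is fixed, the morphism-level data required by Definition~\ref{bilin}, namely the morphisms $\xi(\phi,T)$ and $\xi(a,\psi)$, are uniquely determined, provided the prescribed target morphisms exist in $E^m$. I would verify existence directly: a $k$-morphism $\phi\in E(a_1,\dots,a_k;a')$ forces $a_1+\dots+a_k=a'$, so at most one $a_i$ equals $1$; running through the cases ($T=\emptyset$, and $T\ne\emptyset$ with either all $a_i=0$ or a single $a_i=1$) shows that the putative source and target subsets are always mutually disjoint with the correct union, so the needed morphism of $E^m$ indeed exists, and likewise for $\xi(a,\psi)$. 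Every axiom of a based bilinear map then holds automatically, since any two parallel morphisms of $E^m$ coincide and hence every diagram in $E^m$ commutes.

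Having produced $\xi\colon E\wedge E^m\to E^m$, I would check the two module axioms against the commutative monoid structure on $E$ from Corollary~\ref{Ecore}. The unit axiom reduces on objects to $\xi(1,T)=T$, which holds by construction (the unit $u\to E$ carries the non-basepoint object to $1$), and the associativity square comparing $\xi\circ(\mu\wedge1)$ with $\xi\circ(1\wedge\xi)$ reduces on objects to $\xi(\mu(1,1),T)=T=\xi(1,\xi(1,T))$; at the level of morphisms both axioms are again forced by the at-most-one-morphism property. This exhibits $E^m$ as an $E$-module.

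Finally, I would invoke Theorem~\ref{Emods}: because the multiplication $\mu\colon E\wedge E\cong E$ of Corollary~\ref{Ecore} is an isomorphism, Proposition~\ref{coreprops} guarantees that the structure map $\xi$ of any $E$-module is automatically an isomorphism, that $E$-modules form a full subcategory of $\Multstar$, and that the module smash product agrees with the one in $\Multstar$. The only step demanding genuine care is fixing the object-level action together with the basepoint conventions so that the forced morphisms actually exist in $E^m$; once that bookkeeping is correct, the rigidity of $E^m$ trivializes every coherence and naturality condition, so no real computation remains.
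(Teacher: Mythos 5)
Your proposal is correct and follows essentially the same route as the paper: both define the module structure via its associated based bilinear map $(E,E^m)\to E^m$ with $1$ acting as the identity on objects, and both observe that the remaining data and coherence conditions are forced (the paper appeals to equivariance from the generating morphisms $\phi_k$, you to the fact that $E^m$ has at most one morphism with any given source and target, which amounts to the same rigidity). Your explicit existence check for the induced morphisms is a welcome expansion of the paper's ``it is easy to verify.''
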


\begin{proof} We define the module structure map by giving its
associated bilinear map; on objects we do the only possible thing:
given an object $S$ of $E^m$, we send $(1,S)$ to $S$.  On
morphisms, we send $(1,\phi)$ to $\phi$, and given the
$k$-morphism $\phi_k:(0^{k-1},1)\to1$ in $E$, we send $(\phi_k,S)$
to the single $k$-morphism $\phi_k^S:(\emptyset^{k-1},S)\to S$ in
$E^m$. All other assignments are now forced by equivariance.  It
is easy to verify the requirements for a module structure.
\end{proof}

We are now ready to define our $\Gstar^\op$-multicategory
$\Estar$.

\begin{definition} Given an object $\br{\b m}=({\b m_1},\dots,
{\b m_k})$ of $\Gstar$, we define $E^*\br{\b m}$ to be
$E^{m_1}\wedge\dots\wedge E^{m_k}$, where $E^m$ is the $m$'th
cartesian power of $E$.  In particular, the $0$-th Cartesian power
$E^0$ is $*$, the null multicategory in $\Multstar$, which also
acts as a 0 object for the smash product in $\Multstar$.  We
define $\Estar()=E$.
\end{definition}

\begin{theorem}
$\Estar$ supports the structure of a $\Gstar^\op$-multicategory.
\end{theorem}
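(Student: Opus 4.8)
The plan is to realize $\Estar$ as a based functor $\Gstar^\op\to\Multstar$ that factors through the full symmetric monoidal subcategory of $E$-modules. Working there is the key simplification: by Theorem \ref{Emods} the smash of $E$-modules is computed as in $\Multstar$ and the inclusion is full, while the monoidal unit is $E$ (the unit for $\wedge_E=\wedge$ on $E$-modules). This is precisely what makes $\Estar()=E$ the empty smash and places all the factors $E^{m_i}$, which are $E$-modules by Theorem \ref{Emod}, together with their smashes in a single symmetric monoidal category; consequently every unit and symmetry isomorphism used below is available and coherent by Mac Lane's theorem, and every multifunctor between these objects is automatically a map of $E$-modules.

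First I would build the one-variable functoriality. A based map $\alpha\colon\b m\to\b n$ in $\F$ induces a based multifunctor $\alpha^*\colon E^n\to E^m$ defined on objects, viewed as subsets $T\subseteq\{1,\dots,n\}$, by $\alpha^*(T)=\{\,i:\alpha(i)\in T\,\}$. Because preimage preserves disjointness and unions, $\alpha^*$ sends the unique $k$-morphism $(T_1,\dots,T_k)\to T'$ (which exists exactly when the $T_i$ are disjoint with union $T'$) to the unique $k$-morphism on preimages, so $\alpha^*$ is a multifunctor, and it is based since $\alpha^*(\emptyset)=\emptyset$. The identities $\id^*=\id$ and $(\beta\alpha)^*=\alpha^*\beta^*$ are immediate, so $E^{(-)}\colon\F^\op\to E\text{-}\mathbf{mod}$ is a functor, its values on morphisms being $E$-module maps by fullness.

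Next I would define $\Estar$ on the morphisms of $\Gstar=\Inj\int\F^{(-)}$. A non-basepoint morphism $\br{\b m}\to\br{\b n}$ is a pair $(q;\alpha_1,\dots,\alpha_s)$ consisting of an injection $q\colon\u r\to\u s$ and maps $\alpha_j\colon\b m_{q^{-1}(j)}\to\b n_j$ in $\F$, where $\b m_{q^{-1}(j)}=\b 1$ when $j$ is missed by $q$. I send it to the $E$-module composite
\[
\bigwedge_{j=1}^s E^{n_j}\xrightarrow{\ \bigwedge_j\alpha_j^*\ }\bigwedge_{j=1}^s E^{m_{q^{-1}(j)}}\xrightarrow{\ \sigma_q\ }\bigwedge_{i=1}^r E^{m_i}=\Estar\br{\b m},
\]
where $\sigma_q$ is the structural isomorphism that reindexes the hit factors by $i=q^{-1}(j)$ and absorbs each missed factor $E^{m_{q^{-1}(j)}}=E^{1}=E$ into the remaining smash via the unit isomorphism. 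Basepoint morphisms are sent to the null map. Since some $\b m_i=\b 0$ forces $\br{\b m}=*$ in $\Gstar$ and the corresponding factor $E^0=*$ is a zero object for $\wedge$, we get $\Estar(*)=*$, so $\Estar$ is based.

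The principal task that remains is functoriality. Identities are immediate: the identity of $\br{\b m}$ has $q=\id$ and each $\alpha_j=\id$, so $\bigwedge_j\alpha_j^*=\id$, no slot is missed, and $\sigma_{\id}=\id$. For composition, take $(q;f)\colon\br{\b m}\to\br{\b n}$ and $(p;g)\colon\br{\b n}\to\br{\b l}$; their composite in $\Gstar$ is $(pq;\,g\circ\F^{(p)}(f))$, with $k$-th component $\gamma_k=g_k\circ\F^{(p)}(f)_k$, where $\F^{(p)}(f)_k=f_{p^{-1}(k)}$ on hit indices and $\id_{\b 1}$ on indices missed by $p$. I would verify $\Estar(q;f)\circ\Estar(p;g)=\Estar(pq;\gamma)$ by a single commuting diagram in $E$-modules, built from three inputs: the contravariance $(\gamma_k)^*=(\F^{(p)}(f)_k)^*\circ g_k^*$; the identity $(\id_{\b 1})^*=\id_E$, which matches the identity factors $\F^{(p)}$ inserts at $p$-missed slots with the unit factors the various $\sigma$'s absorb; and the naturality and coherence of the unit and symmetry isomorphisms. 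The one real obstacle is the bookkeeping for how $\sigma_p$ followed by $\sigma_q$ reassembles into $\sigma_{pq}$: one organizes the indices missed by $pq$ as those missed by $p$ together with the $p$-images of the indices missed by $q$, after which Mac Lane coherence forces the diagram to commute. The verification is therefore routine but notation-heavy, and I would present it as that one large diagram rather than componentwise.
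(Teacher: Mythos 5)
Your proposal is correct and follows essentially the same route as the paper: define $\alpha_j^*\colon E^{n_j}\to E^{m_{q^{-1}(j)}}$ componentwise (your preimage description agrees with the paper's definition via product projections), smash these together, and then absorb the $E$-factors at slots missed by $q$ using the $E$-module structure of Theorem \ref{Emod} together with Proposition \ref{coreprops} and Theorem \ref{Emods}. Your extra detail on verifying compatibility with composition in $\Gstar$ fills in exactly the step the paper leaves to the reader, and does so correctly.
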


\begin{proof}
Suppose given a non-basepoint morphism $(\alpha,q):(\b
m_1,\dots,\b m_r)\to(\b n_1,\dots,\b n_s)$ in $\Gstar$.  We must
define $E^*(\alpha, q):E^*\br{\b n}\to E^*\br{\b m}$ compatible
with the composition in $\Gstar$.

For each $j$ with $1\le j\le s$, we have a given morphism in $\F$
$$\alpha_j:\b m_{q^{-1}(j)}\to\b n_j,
$$
where we have $\b m_{q^{-1}(j)}=\b1$ if $q^{-1}(j)=\emptyset$.
These induce maps of based multicategories
$$\alpha_j^*:E^{n_j}\to E^{m_{q^{-1}(j)}}
$$
by requiring the maps to fit into commutative diagrams with the
product projection maps for $1\le t\le m_{q^{-1}(j)}$:
$$\xymatrix{
E^{n_j}\ar[r]^-{\alpha_j^*}\ar[dr]_-{\pi_{\alpha_j(t)}}
&E^{m_{q^{-1}(j)}}\ar[d]^-{\pi_t}
\\&E,}
$$
where $\pi_{\alpha_j(t)}=*$, the null map, if $\alpha_j(t)=0$.

Now smashing the $\alpha_j^*$'s together gives us a map
$$\alpha^*:E\br{\b n}=E^{n_1}\wedge\dots\wedge E^{n_s}\to
E^{m_{q^{-1}(1)}}\wedge\dots\wedge E^{m_{q^{-1}(s)}} =E(q_*\br{\b
m}).
$$
Further, Theorem \ref{Emod} gives $E$-module structure maps for
the Cartesian powers $E^{m_i}$, while Proposition
\ref{coreprops} and Theorem \ref{Emods} show that the order in which the factors
$E=E^{m_{q^{-1}(j)}}$ for $q^{-1}(j)=\emptyset$ are absorbed is
immaterial. Consequently, we get a canonical isomorphism
$$\xymatrix{
q^*:E^*(q_*\br{\b m})\ar[r]^-{\cong}&E^*\br{\b m},}
$$
and we define $E^*(\alpha,q)=q^*\circ\alpha^*$.  The verification
that this definition is compatible with composition in $\G$ is
left to the reader.
\end{proof}

This Theorem now justifies the following definition.

\begin{definition}
Define $\Jhat\colon \Multstar \to \GstarCat$ by letting
$\Jhat{M}\br{\b m}$ be the underlying based category of
$\Multstar(\Estar\br{\b m},M)$.
\end{definition}

Finally, we must show that $\Jhat$ is lax symmetric monoidal.  The
existence of the lax structure map for the product follows from
two observations: first, given objects $\br{\b m}$ and  $\br{\b
n}$ of $\Gstar$, we have
$$\Estar\br{\b m}\wedge\Estar\br{\b n}=\Estar(\br{\b
m}\concat\br{\b n}),
$$
and second, the definition of the smash product of
$\Gstar$-categories as a Kan extension makes it only necessary to
observe that we have a natural map
\begin{gather}
\Mult_*(E^*\br{\b m},M)\times\Mult_*(E^*\br{\b n},N)\notag
\\ \to\Mult_*(E^*\br{\b m}\wedge  E^*\br{\b n},M\wedge N)\notag
\\=\Mult_*(E^*(\br{\b m}\concat\br{\b n}),M\wedge N).\notag
\end{gather}
The structure map for the unit follows from the observation that
$\Multstar(E,u)=*$, the terminal based multicategory, and it
follows that $\Jhat(u)=*$. The lax structure map for the unit is
then given by the unique map to the terminal object. The necessary
coherence properties for a lax symmetric monoidal functor are now
easily verified.

\section{Proof of Consistency}\label{extend}

This section completes the proof of Theorem \ref{extn} by showing
that composing our forgetful multifunctor $U:\P\to\Multstar$ with
the represented lax symmetric monoidal functor
$$\Jhat:\Multstar\to\GstarCat
$$
results in the multifunctor $J$ described in \cite{RMA}, up to
natural isomorphism. Let $\C$ be a permutative category. We begin
by recalling the definition of $J\C$, which assigns to each object
$(\b n_1,\dots,\b n_k)=\br{\b n}$ of $\Gstar$ a category
$J\C\br{\b n}$, which has as its objects systems of objects of
$\C$ indexed by $k$-tuples $\br{S}=(S_1,\dots,S_k)$ of subsets
$S_i\subset\{1,\dots,n_i\}$. Of course, $J$ assigns the null based
category $*$ to the base object of $\Gstar$. (The description of
$J\C$ in
\cite{RMA} is in terms of the category $\G$ which we have not
defined here, but the descriptions are equivalent.) In order to
explain the properties we require for these systems, we need some
notation: given a subset $T\subset\{1,\dots,n_i\}$ for some $1\le
i\le k$, we write $\br{S\subst{i}{T}}$ for the $n$-tuple
$(S_{1},\dotsc,S_{i-1},T,S_{i+1},\dotsc,S_{k})$ obtained by
substituting $T$ in the $i$-th position.  We can now make sense of
the following definition.

\begin{definition}
Let $\C$ be a permutative category and $\br{\b n}=(\b n_1,\dots,\b
n_k)$ a non-basepoint object of $\Gstar$.  The category $J\C\br{\b
n}$ has objects the systems $\{C_{\br{S}},
\rho_{\br{S};i,T,U}\}$, where
\begin{enumerate}
\item $\br{S}=(S_{1},\dotsc,S_{k})$ runs through all $k$-tuples of
subsets $S_{i}\subset\{1,\dots,n_i\}$,
\item For $\rho_{\br{S};i,T,U}$, $i$ runs through $1,\dotsc,k$, and
$T,U$ run through the subsets of $S_{i}$ with $T\cap U=\emptyset$
and $T\cup U = S_{i}$,
\item The $C_{\br{S}}$ are objects of $\C$, and
\item The $\rho_{\br{S};i,T,U}$ are morphisms $C_{\br{S\subst{i}T}}\oplus
C_{\br{S\subst{i}U}}\to C_{\br{S}}$ in $\C$,
\end{enumerate}
such that
\begin{enumerate}
\item $C_{\br{S}}=0$ if $S_{i}=\emptyset$ for any $i$,
\item $\rho_{\br{S};i,T,U}=\id$ if any of the $S_{j}$ (for any $j$),
$T$, or $U$ are empty,
\item For all $\rho_{\br{S};i,T,U}$ the following
diagram commutes:
$$ \xymatrix@C+40pt @R+12pt{
C_{\br{S\subst{i}T}}\oplus C_{\br{S\subst{i}U}}
\ar[d]_{\gamma}\ar[r]^-{\rho_{\br{S};i,T,U}}
&C_{\br{S}}\ar @{=}[d]\\
C_{\br{S\subst{i}U}}\oplus C_{\br{S\subst{i}T}}
\ar[r]_-{\rho_{\br{S};i,U,T}}
&C_{\br{S},}
} $$
\item For all $\br{S}$, $i$, and $T,U,V\subset\{1,\dots,n_i\}$ with $T\cup
U\cup V=S_{i}$ and $T$, $U$, and $V$ mutually disjoint, the
following diagram commutes:
$$ \xymatrix@C+50pt @R+12pt{
C_{\br{S\subst{i}T}}\oplus C_{\br{S\subst{i}U}}\oplus
C_{\br{S\subst{i}V}}
\ar[d]_{\id\oplus \rho_{\br{S\subst{i}(U\cup V)};i,U,V}}
\ar[r]^{\rho_{\br{S\subst{i}(T\cup U)};i,T,U}\oplus \id}
&C_{\br{S\subst{i}(T\cup U)}}\oplus C_{\br{S\subst{i}V}}
\ar[d]^{\rho_{\br{S};i,T\cup U,V}}\\
C_{\br{S\subst{i}T}}\oplus C_{\br{S\subst{i}(U\cup V)}}
\ar[r]_{\rho_{\br{S};i,T,U\cup V}}
&C_{\br{S}},
} $$
\item For all $\rho_{\br{S};i,T,U}$ and $\rho_{\br{S};j,V,W}$ with
$i\ne j$, the following diagram commutes:
$$ \xymatrix@C-35pt @R-3pt{
&C_{\br{S\subst{j}V}}\oplus
C_{\br{S\subst{j}W}}
\ar[ddr]^{\rho_{\br{S};j,V,W}}
\\
C_{\br{S\subst{i}T\subst{j}V}}\oplus
C_{\br{S\subst{i}U\subst{j}V}}\oplus
C_{\br{S\subst{i}T\subst{j}W}}\oplus
C_{\br{S\subst{i}U\subst{j}W}}
\ar[ur]^{(\rho_{\br{S\subst{j}V};i,T,U})\oplus
(\rho_{\br{S\subst{j}W};i,T,U})\qquad\qquad }
\ar[dd]_{\id\oplus \gamma \oplus \id}
\\
&&C_{\br{S}}.\\
C_{\br{S\subst{i}T\subst{j}V}}\oplus
C_{\br{S\subst{i}T\subst{j}W}}\oplus
C_{\br{S\subst{i}U\subst{j}V}}\oplus
C_{\br{S\subst{i}U\subst{j}W}}
\ar[dr]_{(\rho_{\br{S\subst{i}T};j,V,W})\oplus
(\rho_{\br{S\subst{i}U};j,V,W})\qquad\qquad }
\\
&C_{\br{S\subst{i}T}}\oplus
C_{\br{S\subst{i}U}}
\ar[uur]_{\rho_{\br{S};i,T,U}}
} $$
\end{enumerate}
A morphism $f\colon \{C_{\br{S}},\rho_{\br{S};i,T,U}\}\to
\{C'_{\br{S}},\rho'_{\br{S};i,T,U}\}$
consists of morphisms $f_{S}\colon C_{\br S}\to C'_{\br S}$ in
$\C$ for all $\br S$ such that $f_{\br S}$ is the identity
$\id_{0}$ when $S_{i}=\emptyset$ for any $i$, and the following
diagram commutes for all $\rho_{\br{S};i,T,U}$:
$$ \xymatrix@C+15pt{
C_{\br{S\subst{i}T}}\oplus
C_{\br{S\subst{i}U}}\ar[r]^-{\rho_{\br{S};i,T,U}}
\ar[d]_{f_{\br{S\subst{i}T}}\oplus f_{\br{S\subst{i}U}}}
&C_{\br{S}}\ar[d]^{f_{\br{{S}}}}\\
C'_{\br{S\subst{i}T}}\oplus
C'_{\br{S\subst{i}U}}\ar[r]_-{\rho'_{\br{S};i,T,U}}
&C'_{\br{S}}.
 } $$
Note that if any of the $\b n_i=\b0$ in the definition above, then
$\br{\b n}=*$, so $J\C\br{\b n}$ must be the terminal category
with one object and one morphism.
\end{definition}

The following theorem is \cite{RMA}, Theorem 6.1.

\begin{theorem}
The categories $J\C\br{\b n}$ support the structure of a
$\Gstar$-category.
\end{theorem}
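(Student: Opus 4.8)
The content is to upgrade $\br{\b n}\mapsto J\C\br{\b n}$ to a based functor $\Gstar\to\Catstar$. Since $\Gstar$ has $*$ as a null object and $J\C(*)=*$, it suffices to define $J\C$ on non-basepoint morphisms between non-basepoint objects and then to check functoriality. I would define the action of a morphism $(\alpha,q)\colon\br{\b m}\to\br{\b n}$, where $\br{\b m}=(\b m_1,\dots,\b m_r)$, $\br{\b n}=(\b n_1,\dots,\b n_s)$, $q\colon\u r\to\u s$ is an injection, and $\alpha_j\colon\b m_{q^{-1}(j)}\to\b n_j$ are based maps, by factoring it, exactly as in the definition of $\F^{(-)}$, as the slot-insertion $q_*$ followed by the slotwise map $\alpha$.

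First I would treat insertions. Since $\b1=\{0,1\}$ is the unit $S^0$ for the smash product of based sets, inserting a slot equal to $\b1$ adds no data: the only nonempty subset of $\{1\}$ is $\{1\}$ itself, every admissible decomposition of it contains $\emptyset$, and so the corresponding $\rho$'s are forced to be identities by axiom (2). Hence insertion yields an isomorphism $J\C\br{\b m}\cong J\C(q_*\br{\b m})$. Next, for a slotwise map $\alpha=(\alpha_1,\dots,\alpha_s)$ (the case $q=\id$) I would pull systems back along preimages: given $\{C_{\br T},\rho\}$ over the source, set $D_{\br S}=C_{\alpha^{-1}\br S}$, where $\alpha^{-1}\br S=(\alpha_1^{-1}S_1,\dots,\alpha_s^{-1}S_s)$, and $\rho'_{\br S;i,T,U}=\rho_{\alpha^{-1}\br S;\,i,\,\alpha_i^{-1}T,\,\alpha_i^{-1}U}$. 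Because the preimage of a based map sends disjoint unions to disjoint unions and $\emptyset$ to $\emptyset$, the pair $(\alpha_i^{-1}T,\alpha_i^{-1}U)$ is again an admissible decomposition of $\alpha_i^{-1}S_i$, so $\rho'$ is well defined; a morphism $\{f_{\br T}\}$ is sent to $\{f_{\alpha^{-1}\br S}\}$. The functor $J\C(\alpha,q)$ is then the composite of these two constructions. (A sanity check: the fold map $\b2\to\b1$ gives $D_{\{1\}}=C_{\{1,2\}}\cong C_{\{1\}}\oplus C_{\{2\}}$, which is the expected covariant ``sum'' map.)

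It then remains to verify axioms (1)--(5) for $D$ and functoriality of the assignment. The axioms transport directly from $C$: axiom (1) holds since $S_i=\emptyset$ forces $\alpha_i^{-1}S_i=\emptyset$; axiom (2) since an empty $T$, $U$, or $S_j$ pulls back to $\emptyset$; and axioms (3)--(5), including the occurrences of the permutativity isomorphism $\gamma$ of $\C$, are literally the corresponding axioms for $C$ evaluated at preimage tuples, using that preimage commutes with $\cup$ and $\cap$. For functoriality, $J\C(\id)=\id$ is immediate, and the composition law reduces to two routine facts: the naturality of the unit-slot isomorphisms under further insertions, so that the insertions for $q$ and then $p$ assemble into those for $p\circ q$; and the contravariance $(\beta\circ\alpha)^{-1}=\alpha^{-1}\circ\beta^{-1}$ of preimage for composable slotwise maps.

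The main obstacle is purely organizational rather than conceptual: a general composite in $\Gstar$ interleaves the slot-insertions coming from the injections with the slotwise based maps, so checking $J\C((\beta,p)\circ(\alpha,q))=J\C(\beta,p)\circ J\C(\alpha,q)$ requires matching this interleaving against the composition rule in $\Gstar$ while tracking the unit-slot isomorphisms through the preimage reindexing. This, together with the diagram chases confirming that axioms (4) and (5) survive the reindexing, is lengthy but entirely mechanical once the preimage description is in place; no idea beyond the compatibility of preimage with the Boolean operations on subsets is required. I note that one could alternatively deduce the result by identifying $J\C\br{\b n}$ with the underlying based category of $\Multstar(\Estar\br{\b n},U\C)$, which is manifestly a $\Gstar$-category; but that identification is precisely the consistency statement established later, so the direct construction above is preferable here.
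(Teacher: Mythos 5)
Your plan matches the paper's own construction: the paper likewise defines the $\Gstar$-category structure on a generating set of morphisms --- slotwise maps in $\F^k$ acting by preimage on the indexing tuples (with exactly your formulas $C^{\alpha}_{\br{S}}=C_{\alpha^{-1}\br{S}}$ and $\rho^{\alpha}_{\br{S};i,T,U}=\rho_{\alpha^{-1}\br{S};i,\alpha^{-1}T,\alpha^{-1}U}$), permutations of slots, and insertion of a unit slot $\b1$ (an isomorphism because every decomposition of $\{1\}$ involves $\emptyset$, forcing the new $\rho$'s to be identities) --- and then leaves the remaining functoriality checks to the reader, citing the earlier paper. The only difference is packaging: you factor a general morphism $(\alpha,q)$ as the reindexing $q_*$ followed by the slotwise map $\alpha$, whereas the paper lists the three kinds of generators separately; this is not a substantive difference.
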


The $\Gstar$-category structure is constructed as follows.
First, for a fixed string length $k$, so $\br{\b n}=(\b
n_1,\dots,\b n_k)$, $J\C\br{\b n}$ is functorial in morphisms of
$\F^k$, as follows.  Given maps $\alpha_i:{\b m_i}\to{\b n_i}$ of
based sets for $1\le i\le k$, we define
$$J\C\br{\alpha}:J\C\br{\b m}\to J\C\br{\b n}
$$
on objects by
$$J\C\br{\alpha}\{C_{\br{S}},\rho_{\br{S};i,T,U}\}
:=\{C_{\br{S}}^\alpha,\rho_{\br{S};i,T,U}^\alpha\},
$$
where
$$C_{\br{S}}^\alpha
=C_{(\alpha_1^{-1}S_1,\dots,\alpha_k^{-1}S_k)}
$$
and
$$\rho_{\br{S};i,T,U}^\alpha
=\rho_{\br{\alpha^{-1}S};i,\alpha^{-1}T,\alpha^{-1}U},
$$
and similarly on morphisms.  Note that since the $\alpha_i$ are
based maps, $\alpha_i^{-1}S_i$ is a subset of $\{1,\dots,m_i\}$
for all $i$.

Next, a permutation $\sigma\in\Sigma_k$ induces a functor
$$\sigma_{!}\colon J\C{(\b n_1,\dots,\b n_k)}
\to J\C{(\b n_{\sigma^{-1}(1)},\dots,\b n_{\sigma^{-1}(k)})},
$$
which is an isomorphism of categories, as follows:  The object
$\{C_{\br{S}},\rho_{\br{S};i,T,U}\}$ is sent to the object
$\{C^{\sigma}_{\br{S'}},\rho^{\sigma}_{\br{S'};i,T}\}$ where
$$
C^{\sigma}_{\br{S'}}
 = C_{\sigma\br{S'}},
\qquad
\rho^{\sigma}_{\br{S'};i,T,U}
 = \rho_{\sigma\br{S'};\sigma(i),T,U},
\qquad
\sigma\br{S'}=(S'_{\sigma(1)},\dotsc,S'_{\sigma(k)}),
$$
so if $S'_{i}=S_{\sigma^{-1}(i)}\subset \{1,\dots,
n_{\sigma^{-1}(i)}\}$, then $\sigma\br{S'}=\br{S}$. The morphism
$\{f_{\br{S}}\}$ is sent to the morphism
$\{f^{\sigma}_{\br{S'}}\}$ where $f^{\sigma}_{\br{S'}}=f_{\sigma
\br{S'}}$.  It is straightforward to verify that
$(\sigma\tau)_!=\sigma_!\tau_!$.

Finally, we have isomorphisms of categories
$$ e\colon J\C{(\b n_{1},\dots,\b n_{k})}\to
J\C{(\b n_{1},\dots,\b n_{k},\b1)}
$$
defined as follows: the object
$\{C_{\br{S}},\rho_{\br{S};i,T,U}\}$ is sent to the object
$\{C^{e}_{\br{S'}},\rho^{e}_{\br{S'};i,T,U}\}$, where
\begin{align}
C^{e}_{(S_{1},\dots,S_{k},\{1\})}
 &= C_{\br{S}},
&
\rho^{e}_{(S_{1},\dots,S_{k},\{1\});i,T,U}
 &= \rho_{\br{S};i,T,U} &{\text{ for } i<k+1,}
\notag\\
C^{e}_{(S_{1},\dots,S_{k},\emptyset)}&=0,
&
\rho^{e}_{(S_{1},\dots,S_{k},\emptyset);i,T,U}
 &= \id,&
\rho^{e}_{(S_{1},\dots,S_{k},\{1\});k+1,T,U}&=\id.
\notag
\end{align}

The morphism $\{f_{\br{S}}\}$ is sent to the morphism
$\{f^{e}_{\br{S'}}\}$ where
$$f^{e}_{(S_{1},\dots,S_{k},\{1\})}=f_{\br{S}}, \qquad
f^{e}_{(S_{1},\dots,S_{k},\emptyset)}=\id.
$$
This description of the components of the objects and morphisms is
complete since the only two subsets of $\{1\}$ are $\{1\}$ and
$\emptyset$. The inverse of this isomorphism is induced by
dropping the $\{1\}$ from $(k+1)$-tuples of the form
$(S_{1},\dotsc,S_{k},\{1\})$.  This describes image functors for a
generating set of morphisms of $\Gstar$, and since $J\C\br{\b
n}=*$ if any of the $\b n_i=\b0$, it is now easy to verify that we
do in fact get a $\Gstar$-category $J\C$.

We now begin the construction of the natural isomorphism $\Jhat
U\C\cong J\C$, and we proceed objectwise in $\Gstar$, so we need
to produce isomorphisms of categories
$$\Jhat U\C\br{\b m}\cong J\C\br{\b m}
$$
for each object $\br{\b m}=(\b m_1,\dots,\b m_k)$ of $\Gstar$. The
bulk of the construction is concerned with the bijection on
objects. Suppose given an object of $\Jhat U\C\br{\b m}$, that is,
of $\Multstar(E^*\br{\b m},U\C)$, say $F:E^*\br{\b m}\to U\C$,
with $\br{\b m}\ne*$. We need to produce an object of $J\C\br{\b
m}$. But the objects of $E^*\br{\b m}$ can be considered as
$k$-tuples $(S_1,\dots, S_k)$ where $S_i\subset\{1,\dots,m_i\}$,
so we get the part of an object of $J\C$ given by a system
$C_{\br{S}}$ by defining
$$C_{\br{S}}:=F\br{S}.
$$
We also need to produce the structure maps in the system, so
suppose given subsets $T$ and $U$ of $\{1,\dots,m_i\}$ with $T\cap
U=\emptyset$ and $T\cup U=S_i$.  We define the associated
structure map $\rho_{\br{S};i,T,U}$ to be the image under $F$ of
the $2$-morphism in $E^{m_i}$ given by
$$(T,U)\to T\cup U=S_i,
$$
together with the objects in the other slots in $\br{S}$.  Now the
coherence properties (1) and (2) follow from $F$ being a based
multifunctor, (3) follows from the commutative diagram
$$\xymatrix{
(T,U)\ar[r]\ar[d] &T\cup U\ar[d]^-{=}
\\(U,T)\ar[r]&T\cup U}
$$
in $E^{m_i}$, (4) follows from the commutative diagram
$$\xymatrix{
(T,U,V)\ar[r]\ar[d]&(T,U\cup V)\ar[d]
\\ (T\cup U,V)\ar[r]&T\cup U\cup V}
$$
in $E^{m_i}$, and (5) follows from bilinearity.

The reverse direction is the most significant part of the proof:
given an object $(C_{\br{S}},\rho_{\br{S};i,T,U})$ of $J\C\br{\b
m}$, we need to construct a multifunctor $F:\Estar\br{\b m}\to
U\C$.  The map is clear on objects: $F\br{S}:=C_{\br{S}}$.  Now
suppose given an $n$-morphism in $E^{m_i}$, say
$(T_1,\dots,T_n)\to S_i$, so $T_r\cap T_s=\emptyset$ unless $r=s$,
and $T_1\cup\dots\cup T_n=S_i$.  We need to construct the image
$n$-morphism in $U\C$ under our multifunctor $F$, which will be a
morphism in $\C$
$$C_{\br{S\subst{i}T_1}}\oplus\dots\oplus C_{\br{S\subst{i}T_n}}
\to C_{\br{S}}.
$$
We define this inductively, requiring the morphism to be $\id_0$
if $n=0$ and $\id_{C_{\br{S}}}$ if $n=1$.  For larger $n$'s, we
define the image $n$-morphism by induction to be the composite
$$\xymatrix{
C_{\br{S\subst{i}T_1}}\oplus\dots\oplus C_{\br{S\subst{i}T_{n-1}}}
\oplus C_{\br{S\subst{i}T_n}}\ar[r]
&C_{\br{S\subst{i}(S\setminus T_n)}}\oplus C_{\br{S\subst{i}T_n}}
\ar[rr]^-{\rho_{\br{S};i,S_i\setminus T_n,T_n}}
&&C_{\br{S}},}
$$
where the first map is given by induction on the first $n-1$
terms, and the second is the structure map given by the object of
$J\C\br{\b m}$.

We must verify that this definition actually gives a multifunctor
$F:E^*\br{\b m}\to U\C$, so we must show that it respects the
composition $\Gamma$ and the action of $\Sigma_n$ on the set
of $n$-morphisms.  By the definition of $E^*\br{\b m}$, this
reduces to checking multifunctoriality in each $\b m_i$
separately, and then bilinearity in each pair, using the based
concepts in both cases.  For notational convenience, we assume
without loss of generality that the list $\br{\b m}=(\b
m_1,\dots,\b m_k)$ has length 1 for the first part of this check,
so $\br{\b m}=\b m$ is an object of $\F$.

Our first step is the following lemma.

\begin{lemma}\label{cut} Let $T_1,\dots,T_i,U_1,\dots,U_j$ be a
collection of mutually disjoint subsets of $\{1,\dots,m\}$.  Let
$T=T_1\cup\dots\cup T_i$ and $U=U_1\cup\dots\cup U_j$.  Then the
morphism induced by the $i+j$-morphism
$$(T_1,\dots,T_i,U_1,\dots,U_j)\to T\cup U
$$
in $E^m$ factors through maps induced by morphisms in $E^m$ as
indicated in the following:
$$C_{T_1}\oplus\dots\oplus C_{T_i}\oplus C_{U_1}\oplus C_{U_j} \to
C_T\oplus C_U\to C_{T\cup U}.
$$
\end{lemma}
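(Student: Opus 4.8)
The plan is to prove the lemma by induction on $j$, the number of blocks in the second group, carrying the first block $(T_1,\dots,T_i)$ along untouched throughout. We work in the length-one setting established just above, so the systems are indexed by single subsets of $\{1,\dots,m\}$. Write $\alpha = F\big((T_1,\dots,T_i)\to T\big)$ and $\beta_j = F\big((U_1,\dots,U_j)\to U\big)$; the goal is the identity
\[
F\big((T_1,\dots,T_i,U_1,\dots,U_j)\to T\cup U\big)
 = \rho_{T,U}\circ(\alpha\oplus\beta_j),
\]
where I abbreviate the structure map $C_T\oplus C_U\to C_{T\cup U}$ as $\rho_{T,U}$ (this is $\rho_{\br{S};i,T,U}$ in the length-one setting, where $\br{S}=T\cup U$ and $i$ is suppressed).

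The base case $j=1$ is immediate: here $\beta_1 = \id_{C_{U_1}}$ since a $1$-morphism is sent to an identity, and the recursive clause defining $F$ — which peels the last input $U_1$ off and postcomposes with a $\rho$ — is exactly the asserted factorization once one observes $(T\cup U_1)\setminus U_1 = T$. The degenerate case $j=0$ is also trivial: then $U=\emptyset$, $C_U=0$, and coherence condition (2) gives $\rho_{T,\emptyset}=\id_{C_T}$, while the strict unit makes $\alpha\oplus\id_0=\alpha$.

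For the inductive step I set $U'=U_1\cup\dots\cup U_j$, so $U = U'\cup U_{j+1}$. First I would apply the recursive definition of $F$ to peel the last input $U_{j+1}$ off the whole list, rewriting the left-hand side as $\rho_{T\cup U',\,U_{j+1}}\circ\big(F((T_1,\dots,T_i,U_1,\dots,U_j)\to T\cup U')\oplus\id_{C_{U_{j+1}}}\big)$, using $(T\cup U)\setminus U_{j+1}=T\cup U'$. Next I would apply the inductive hypothesis to the split into $(T_1,\dots,T_i)$ and $(U_1,\dots,U_j)$, whose second block has union $U'$, replacing the inner $F$ by $\rho_{T,U'}\circ(\alpha\oplus\beta_j')$ with $\beta_j'=F((U_1,\dots,U_j)\to U')$. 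Repeated use of the interchange law (functoriality of the bifunctor $\oplus$) then puts the composite in the form $\rho_{T\cup U',\,U_{j+1}}\circ(\rho_{T,U'}\oplus\id)\circ(\alpha\oplus\beta_j'\oplus\id_{C_{U_{j+1}}})$.

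The one genuine input is the associativity coherence, condition (4) in the definition of $J\C$, applied to the mutually disjoint triple $T,U',U_{j+1}$ (whose union is $T\cup U$): it gives exactly
\[
\rho_{T\cup U',\,U_{j+1}}\circ(\rho_{T,U'}\oplus\id)
 = \rho_{T,U}\circ(\id_{C_T}\oplus\rho_{U',U_{j+1}}).
\]
Substituting and regrouping once more by the interchange law leaves $\alpha$ on the first summand and $\rho_{U',U_{j+1}}\circ(\beta_j'\oplus\id_{C_{U_{j+1}}})$ on the second; but the latter is precisely $\beta_{j+1}=F((U_1,\dots,U_{j+1})\to U)$, again by the recursive definition of $F$ applied to the $U$-block alone, which completes the step. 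The main obstacle is purely organizational: keeping the successive interchange-law rebracketings straight so that the re-association of the three blocks $T$, $U'$, $U_{j+1}$ lands as a single instance of the pentagon (4), with the untouched first block contributing only the factor $\alpha$ riding along unchanged.
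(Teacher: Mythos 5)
Your proof is correct and takes essentially the same route as the paper's: both arguments induct on $j$, peel the last $U$-block off via the recursive definition of the induced map, invoke the inductive hypothesis on the remaining inputs, and close with a single application of the associativity coherence (property (4)) to the disjoint triple $T$, $U'$, $U_{j+1}$. The paper merely packages these steps as one commuting diagram (its ``left triangle'' is your final identification of $\beta_{j+1}$ from the recursive definition), so there is no substantive difference.
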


\begin{proof} We induct on $j$, and the claim is trivially true if
$j=0$ or $j=1$.  For the general case, we examine the following
diagram, in which all arrows are induced from morphisms in $E^m$:
$$\xymatrix@C-100pt{
&&&&&&{C_{T_1}\oplus\dots\oplus C_{T_i}\oplus C_{U_1}\oplus\dots\oplus
C_{U_{j-1}}\oplus C_{U_j}}\ar@/_2pc/[ddlll]\ar[dlll]\ar[dllllll]
\\C_{T\cup U\setminus U_j}\oplus C_{U_j}\ar[d]
&&&C_T\oplus C_{U\setminus U_j}\oplus C_{U_j}\ar[lll]\ar[d]
\\C_{T\cup U}.
&&&C_T\oplus C_U\ar[lll]
}
$$
The top triangle commutes by induction, the left triangle commutes
by definition of the induced maps, and the square is the
associativity condition for the structure maps of an object of
$J\C\br{\b m}$, property (4). The clockwise composite is the
definition of the induced map, and the conclusion follows.
\end{proof}

We can now show that our construction respects the
composition $\Gamma$.  Suppose we have mutually disjoint
objects $S_{11},\dots,S_{1r_1},\dots,S_{n1},\dots,S_{nr_n}$ of
$E^m$, so we have $S_{ij}$'s for $1\le i\le n$ and $1\le j\le
r_i$.  We write $S_i=S_{i1}\cup\dots\cup S_{ir_i}$ and
$S=S_1\cup\dots\cup S_n$.  To show our construction preserves
composition, we must show that the composite of induced maps
$$C_{S_{11}}\oplus\dots\oplus C_{S_{1r_1}}\oplus\dots\oplus
C_{S_{n1}}\oplus\dots\oplus C_{S_{nr_n}}\to
C_{S_1}\oplus\dots\oplus C_{S_n}\to C_S
$$
is the induced map.  We examine the following diagram, in which
all maps are induced from morphisms in $E^m$, and proceed by
induction on $n$:
$$\xymatrix@C-122pt{
&&&&&&C_{S_{11}}\oplus\dots\oplus C_{S_{1r_1}}\oplus\dots\oplus
C_{S_{n1}}\oplus\dots\oplus
C_{S_{nr_n}}\ar@/_1.5pc/[ddlll]\ar[dlll]\ar[dllllll]\ar[dd]
\\C_{S\setminus S_{nr_n}}\oplus C_{S_{nr_n}}\ar[d]
&&&C_{S\setminus S_n}\oplus C_{S_n\setminus S_{nr_n}}\oplus C_{S_{nr_n}}
\ar[lll]\ar[d]
\\C_S.
&&&C_{S\setminus S_n}\oplus C_{S_n}\ar[lll]
&&&C_{S_1}\oplus\dots\oplus C_{S_n}\ar[lll]
}
$$
Reading from left to right, the triangles out of the top left
entry commute by induction, by definition, and by Lemma \ref{cut},
while the square is another instance of the associativity
property.  Now the clockwise composite defines the total induced
map, while the counterclockwise composite is the given one.  The
conclusion follows.

In order to check that our construction respects the permutation
actions, it suffices to check preservation of transposition of
adjacent letters, since these generate $\Sigma_n$.  Suppose given
mutually disjoint subsets $T_1,\dots,T_i,U_1,\dots,U_j$ of
$\{1,\dots,m\}$, and we write as before $T=T_1\cup\dots\cup T_i$
and $U=U_1\cup\dots\cup U_j$.  We need to show that
$$\xymatrix{
C_{T_1}\oplus\dots\oplus C_{T_{i-1}}\oplus C_{T_i}\oplus
C_{U_1}\oplus\dots\oplus C_{U_j}\ar[dd]_-{\cong}\ar[dr]
\\&C_{T\cup U}
\\C_{T_1}\oplus\dots\oplus C_{T_i}\oplus C_{T_{i-1}}\oplus
C_{U_1}\oplus\dots\oplus C_{U_j}\ar[ur]}
$$
commutes.  This follows from the case $j=0$, however, by the
following diagram, in which all maps are induced from morphisms in
$E^m$:
$$\xymatrix@C-20pt{
C_{T_1}\oplus\dots\oplus C_{T_{i-1}}\oplus C_{T_i}\oplus
C_{U_1}\oplus\dots\oplus C_{U_j}\ar[dd]_-{\cong}\ar[dr]\ar[drrrr]
\\&C_T\oplus C_U\ar[rrr]
&&&C_{T\cup U}.
\\C_{T_1}\oplus\dots\oplus C_{T_i}\oplus C_{T_{i-1}}\oplus
C_{U_1}\oplus\dots\oplus C_{U_j}\ar[ur]\ar[urrrr]}
$$
The left triangle follows from the case $j=0$, and the other two
triangles are instances of Lemma \ref{cut}.  The case $j=0$
follows by examining the diagram
$$\xymatrix@C-20pt{
C_{T_1}\oplus\dots\oplus C_{T_{i-1}}\oplus
C_{T_i}\ar[dd]_-{\cong}\ar[dr]\ar[drrrr]
\\&C_{T\setminus(T_{i-1}\cup T_i)}\oplus C_{T_{i-1}\cup T_i}\ar[rrr]
&&&C_{T}.
\\C_{T_1}\oplus\dots\oplus C_{T_i}\oplus C_{T_{i-1}}\ar[ur]\ar[urrrr]
}$$ The left triangle follows from property (3), the transposition
axiom for the structure maps of objects in $J\C\br{\b m}$, and the
other two triangles are further instances of Lemma \ref{cut}.  The
construction is therefore multifunctorial.

We now return to the full generality of objects of $E^*\br{\b m}$,
and must verify that our construction is based bilinear.
Bilinearity follows from property (5) of an object of $J\C\br{\b
m}$ by the same argument used in the proof of Theorem \ref{emb} to
prove that bilinearity follows from a pentagon diagram, using
Figure \ref{Fi:1}.  Finally, basedness follows from properties (1)
and (2) of an object of $J\C\br{\b m}$, requiring that
$C_{\br{S}}=0$ whenever any $S_i=\emptyset$, and that the
structure map be the identity whenever any $S_i$, $T$, or $U$ is
empty.  This completes the verification that our construction
produces a multifunctor $E^*\br{\b m}\to U\C$ from an object of
$J\C$.

We must show that these correspondences are inverse to each other.
Given $F:E^*\br{\b m}\to U\C$, we produce the
object$(C_{\br{S}},\rho)$ of $J\C\br{\b m}$ with the system of
objects of $\C$ given by
$$C_{\br{S}}:=F\br{S}.
$$
From this we redefine a multifunctor from $E^*\br{\b m}$ to $U\C$,
where given an $n$-morphism $(T_1,\dots,T_n)\to S_i$ in $E^{m_i}$,
we use the inductive definition given by the composite
$$\xymatrix{
C_{\br{S\subst{i}T_1}}\oplus\dots\oplus C_{\br{S\subst{i}T_{n-1}}}
\oplus C_{\br{S\subst{i}T_n}}\ar[r]
&C_{\br{S\subst{i}(S\setminus T_n)}}\oplus C_{\br{S\subst{i}T_n}}
\ar[rr]^-{\rho_{\br{S};i,S\setminus T_n,T_n}}
&&C_{\br{S}}}
$$
to define the image $n$-morphism in $U\C$.  However, this must
coincide with the image $n$-morphism given by our original $F$ by
induction and the commutativity in $E^{m_i}$ of the diagram
$$\xymatrix{
(T_1,\dots,T_{n-1},T_n)\ar[r]\ar[dr]
&(S_i\setminus T_n,T_n)\ar[d]
\\&S_i.}
$$

Conversely, suppose given an object $(C_{\br{S}},\rho)$ of $J\C$.
Then we define the corresponding multifunctor $F$ by setting
$F\br{S}:=C_{\br{S}}$ and using induction to define the
correspondence on $n$-morphisms.  But now taking that multifunctor
and recovering the corresponding object of $J\C$ takes us back to
the system of objects $C_{\br{S}}$, and the structure maps $\rho$
are all induced by maps given by $F$ from the original object of
$J\C$.  We therefore recover the original object, and we have
shown that our correspondences give inverse bijections between the
objects of $J\C$ and of $\Mult_*(E^*,U\C)$.

In order to show that we also get inverse bijections on morphisms,
and therefore isomorphisms of categories, we just note that the
morphisms in both $J\C$ and $\Mult_*(E^*,U\C)$ are given by
natural transformations, and that our constructions in each
direction give inverse correspondences of natural transformations.
Preservation of composition and naturality in $\G$ are easy
exercises left to the reader.  We have finished showing that our
construction extends that of \cite{RMA}, and therefore the proof
of Theorem \ref{extn}.

\end{document}